\newcommand{\bB}{{\mathbb{B}}}
\newcommand{\bC}{{\mathbb{C}}}
\newcommand{\bD}{{\mathbb{D}}}
\newcommand{\bM}{{\mathbb{M}}}
\newcommand{\bN}{{\mathbb{N}}}
\newcommand{\bS}{{\mathbb{S}}}
\newcommand{\bT}{{\mathbb{T}}}
  \newcommand{\A}{{\mathcal{A}}}
  \newcommand{\B}{{\mathcal{B}}}
  \newcommand{\E}{{\mathcal{E}}}
  \newcommand{\F}{{\mathcal{F}}}
\renewcommand{\H}{{\mathcal{H}}}
  \newcommand{\K}{{\mathcal{K}}}
\renewcommand{\O}{{\mathcal{O}}}
\renewcommand{\S}{{\mathcal{S}}}
  \newcommand{\X}{{\mathcal{X}}}
\newcommand{\fF}{{\mathfrak{F}}}
\newcommand{\fJ}{{\mathfrak{J}}}
\newcommand{\fK}{{\mathfrak{K}}}
\newcommand{\fL}{{\mathfrak{L}}}
\newcommand{\fl}{{\mathfrak{l}}}
\newcommand{\fs}{{\mathfrak{s}}}
\newcommand{\fT}{{\mathfrak{T}}}
\newcommand{\sB}{{\mathscr{B}}}
\newcommand{\rA}{\mathrm{A}}
\newcommand{\rC}{\mathrm{C}}
\newcommand{\eps}{\varepsilon}
\renewcommand{\phi}{\varphi}
\newcommand{\upchi}{{\raise.35ex\hbox{$\chi$}}}
\newcommand{\ol}{\overline}
\newcommand{\qand}{\quad\text{and}\quad}
\newcommand{\re}{\operatorname{Re}}
\newcommand{\spn}{\operatorname{span}}
\newcommand{\spec}{\operatorname{Spec}}
\newcommand{\Prim}{\operatorname{Prim}}
\newtheorem{lemma}{Lemma}[section]
\newtheorem{theorem}[lemma]{Theorem}
\newtheorem{proposition}[lemma]{Proposition}
\newtheorem{corollary}[lemma]{Corollary}
\newtheorem{theoremx}{Theorem}
\theoremstyle{definition}
\newtheorem{example}{Example}
\author{Rapha\"el Clou\^atre}
\address{Department of Mathematics, University of Manitoba, Winnipeg, Manitoba, Canada R3T 2N2}
\email{raphael.clouatre@umanitoba.ca\vspace{-2ex}}
\author{Ian Thompson}
\email{thompsoi@myumanitoba.ca\vspace{-2ex}}
\thanks{R.C. was partially supported by an NSERC Discovery Grant. I.T. was partially supported by an NSERC CGS-M Scholarship and University of Manitoba Graduate Fellowship.}
\title{Minimal boundaries for operator algebras}
\begin{document}
\begin{abstract}
We study \emph{boundaries} for unital operator algebras. These are sets of irreducible $*$-representations that completely capture the spatial norm attainment for a given subalgebra. Classically, the Choquet boundary is the minimal boundary of a function algebra and it coincides with the collection of peak points. We investigate the question of minimality for the non-commutative counterpart of the Choquet boundary and show that minimality is equivalent to what we call the Bishop property. Not every operator algebra has the Bishop property, but we exhibit classes of examples that do. Throughout our analysis, we exploit various non-commutative notions of peak points for an operator algebra. When specialized to the setting of $\rC^*$-algebras, our techniques allow us to provide a new proof of a recent characterization of those $\rC^*$-algebras admitting only finite-dimensional irreducible representations. 
\end{abstract}
\maketitle

\section{Introduction}\label{S:intro}

Let $X$ be a compact metric space and let $\A\subset \rC(X)$ be a unital norm-closed subalgebra that separates the points of $X$. In other words, $\A$ is a \emph{uniform algebra}. A \emph{boundary} for $\A$ is a subset of $X$ on which every function in $\A$ attains its maximum modulus. The problem of identifying the unique smallest boundary $M\subset X$ for $\A$ is a classical question with a very elegant solution, which we briefly recall. 

Consider the set $C\subset X$ of points $x$ for which the character on $\A$ of evaluation at $x$ has a unique positive linear extension to $\rC(X)$ -- this is called the \emph{Choquet boundary} of $\A$. A seminal theorem of Choquet \cite[Chapter 3]{phelps2001} implies that $C$ is in fact a boundary for $\A$. Therefore, $M\subset C$. 

Next, we recall that a \emph{peak point} for $\A$ is a point $x\in X$ for which there is a function $a\in \A$ such that $a(x)=1>|a(y)|$ if $y\neq x$. It is immediate that if $P\subset X$ denotes the set of peak points, then $P\subset M$. In turn, a classical theorem of Bishop shows that $P=C$ \cite{bishop1959bdry},\cite[Corollary 8.2]{phelps2001}; so that the Choquet boundary of $\A$ is the minimal boundary.  Closed boundaries are also of interest, and the smallest such is the \emph{Shilov boundary}. It follows from the above discussion that the Shilov boundary is simply the closure of the Choquet boundary.

In this paper, we will be concerned with non-commutative counterparts to the aforementioned objects. A norm-closed subalgebra $\A\subset B(\H)$ will be referred to as an \emph{operator algebra} and for the present discussion we will assume that $\A$ is unital. In \cite{arveson1969}, Arveson initiated an ambitious program through which $\A$ is analyzed by means of non-commutative analogues of tools from uniform algebra theory. From his work there emerged the ubiquitous notion of a \emph{boundary representation} for $\A$ -- an irreducible $*$-representation $\beta$ of $\rC^*(\A)$ that is the unique completely contractive extension of $\beta|_\A$. The collection of these boundary representations is usually thought of as the Choquet boundary of $\A$. Indeed, the \emph{$\rC^*$-envelope} $\rC^*_e(\A)$ is the smallest $\rC^*$-algebra generated by a completely isometric copy of $\A$ \cite{hamana1979}, and it is an important fact that the $\rC^*$-envelope can be identified via boundary representations \cite{MS1998},\cite{dritschel2005},\cite{arveson2008},\cite{DK2015}. Roughly speaking, this reflects the aforementioned fact that the Shilov boundary of a uniform algebra is the closure of its Choquet boundary. 

On account of the successful realization of Arveson's program for constructing the $\rC^*$-envelope, one may wish to import other tools from classical uniform algebra theory into the non-commutative operator algebraic world. Peak points are natural candidates for this endeavour. Accordingly, different types of non-commutative peak points have been considered and successfully exploited in recent years, see for instance \cite{hay2007},\cite{BHN2008},\cite{arveson2010},\cite{arveson2011},\cite{BR2011},\cite{BR2013},\cite{blecher2013},\cite{clouatre2018lochyp},\cite{CTh2020fdim},\cite{DP2022},\cite{CT2022ncHenkin}. The overarching goal of our work here is to utilize non-commutative peak points to determine whether or not the Choquet boundary of a unital operator algebra is its minimal boundary. This is made more precise below.

Towards this end, in Section \ref{S:ncpeak}, we focus on three types of non-commutative peak points: representations, states and projections. In the commutative world, all of these objects simply encode the usual notion, but the non-commutative situation is much richer and will necessitate the use of all of these incarnations of peak points. 
For the purposes of the current discussion, it suffices to say that an irreducible $*$-representation $\pi$ of $\rC^*(\A)$ is a  \emph{local $\A$-peak representation} 
if there is an integer $n\geq 1$ and an element $B\in \bM_n(\A)$ with $\|B\|=1$ such that
\[
1>\|P_F \sigma^{(n)}(B)|_F\|
\]
for every irreducible $*$-representation $\sigma:\rC^*(\A)\to B(\H_\sigma)$ unitarily inequivalent to $\pi$ and every finite-dimensional subspace $F\subset \H^{(n)}_\sigma$. 
Next, a state $\omega$ on $\rC^*(\A)$ is an \emph{$\A$-peak state} if there is $a\in \A$ such that $\omega(a^*a)>\phi(a^*a)$ for every other state $\phi$. Finally, we say that a projection $q\in \rC^*(\A)^{**}$ is an \emph{$\A$-peak projection} if there is a contraction $a\in \A$ such that $aq=q$ and   for every unital completely positive map $\psi:\rC^*(\A)\to B(\H_\psi)$ with $\H_\psi$ finite-dimensional and $\|\psi(a^*a)\|=1$, we must have $\|\psi(q)\|=1$. Theorem \ref{T:peakdiff} and the surrounding discussion exhibit connections between different notions of non-commutative peak points.

In Section \ref{S:peakChoquet}, we explore whether non-commutative peak points necessarily belong to the Choquet boundary. This question is readily settled for representations and states: see Theorem \ref{T:locpeakbdry}, Example \ref{E:peakstateuep}, and Proposition \ref{P:statepeakbdry}. This issue is more subtle for projections. Using recent developments in matrix convexity \cite{HL2021}, we obtain the following (see Theorem  \ref{T:peakprojker}).

\begin{theoremx}\label{T:A}
Let $\A\subset B(\H)$ be a unital operator algebra. Let $\pi$ be a unital $*$-representation  of $\rC^*(\A)$ whose support projection is an $\A$-peak projection. Then, there is a set $\sB_\pi$ of boundary representations for $\A$ on $\rC^*(\A)$ such that if we set $\fJ_\pi=\bigcap_{\beta\in \sB_\pi}\ker \beta$, then 
\[
\A\cap \fJ_\pi\subset \ker \pi\subset \fJ_\pi \qand \rC^*(\A\cap \fJ_\pi)=\ker \pi.
\]
\end{theoremx}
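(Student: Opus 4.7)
The plan is to construct $\sB_\pi$ from the peaking contraction, use the matrix convexity framework of \cite{HL2021} to establish the inclusion $\A\cap\fJ_\pi\subset\ker\pi$, and then exploit the peak contraction once more to recover the full ideal $\ker\pi$ from its intersection with $\A$.

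Unpacking the hypothesis, let $q$ denote the support projection of $\pi$, so that $\pi^{**}(q)=I$ and $\ker\pi=\{x\in\rC^*(\A):xq=0\}$. The peak condition furnishes a contraction $a\in\A$ with $aq=q$; applying $\pi^{**}$ to this equation gives $\pi(a)=I$. My proposed set is
\[
\sB_\pi=\{\beta : \beta \text{ is a boundary representation for }\A \text{ with } \beta(a)=I_{\H_\beta}\}.
\]
For $\beta\in\sB_\pi$ and any unit vector $\xi\in\H_\beta$, the state $\omega_\xi\circ\beta$ is a finite-dimensional UCP map with $(\omega_\xi\circ\beta)(a^*a)=1$, so the peak property forces $(\omega_\xi\circ\beta)(q)=1$; equivalently, $\beta^{**}(q)\xi=\xi$. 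As $\xi$ varies, $\beta^{**}(q)=I$, hence $\ker\pi\subset\ker\beta$, and so $\ker\pi\subset\fJ_\pi$.

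For the inclusion $\A\cap\fJ_\pi\subset\ker\pi$, I would consider the matrix convex set of finite-dimensional UCP maps $\psi$ on $\rC^*(\A)$ with $\psi(a)=I$; every compression of $\pi$ to a finite-dimensional reducing subspace lies in it. The matrix-convex extreme-point theory of \cite{HL2021} (building on Webster--Winkler and Davidson--Kennedy) identifies the matrix extreme points of this set that possess the unique extension property for $\A$ as precisely the boundary representations $\beta$ satisfying $\beta(a)=I$, that is, the elements of $\sB_\pi$. The matrix Krein--Milman theorem then displays each such compression of $\pi$ as a matrix convex combination (or limit) of members of $\sB_\pi$, so any $b\in\A$ annihilated by every $\beta\in\sB_\pi$ must satisfy $\pi(b)=0$.

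These two inclusions force $\A\cap\fJ_\pi=\A\cap\ker\pi$, and the final equality becomes $\rC^*(\A\cap\ker\pi)=\ker\pi$. Since $\pi(a)=I$, the element $c=1-a$ lies in $\A\cap\ker\pi$; the identity $1-a^*a=c^*(1-c)+c=c+c^*-c^*c$ places $1-a^*a$ and likewise $1-aa^*$ inside $\rC^*(\A\cap\ker\pi)$, and for each $b\in\A$ the products $(1-a)b$ and $b(1-a)$ belong to $\A\cap\ker\pi$. The decisive step is to show that $1-a^*a$ is strictly positive in the ideal $\ker\pi$, so that its functional calculus produces an approximate unit for $\ker\pi$ already sitting inside $\rC^*(\A\cap\ker\pi)$, which yields the desired equality. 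I expect this last step to be the main obstacle: translating the matrix-convex approximation of $\pi$ by members of $\sB_\pi$ into quantitative strict positivity of $1-a^*a$ in $\ker\pi$ is where the peak hypothesis is used most decisively, distinguishing the peak-projection assumption from a merely separating one.
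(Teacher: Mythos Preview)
Your steps 2 and 3 are in the right spirit and can be made to work, but the exposition of step 3 conflates two different objects. The matrix extreme points of the face $\{\psi:\psi(a)=I\}$ are finite-dimensional \emph{pure} unital completely positive maps, not boundary representations; one must dilate each such $\theta$ (via Davidson--Kennedy) to a boundary representation $\beta$, and then check that $\beta(a)=I$ so that $\beta\in\sB_\pi$. This last check is not automatic from $\theta(a)=I$: a priori one only gets $\beta(a)\xi=\xi$ on a finite-dimensional subspace. The missing observation is that the peak property, applied to the vector state at such a $\xi$, forces $\widehat\beta(\fs_\pi)\neq 0$; since $\fs_\pi$ is central and $\beta$ irreducible, $\widehat\beta(\fs_\pi)=I$, and then $\beta(a)=\widehat\beta(a\fs_\pi)=\widehat\beta(\fs_\pi)=I$. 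This is essentially the same mechanism as your step 2, but it must be invoked again here. The paper avoids this by never insisting that $\beta(a)=I$: it works locally on finite-dimensional subsystems $\X\subset\A+\A^*$ and finite-dimensional compressions of $\pi$, applies the finite matrix-extreme decomposition of \cite{HL2021} at each stage, and only records that each resulting $\beta$ satisfies $\|P_{G_\beta}\beta(a)|_{G_\beta}\|=1$, which already suffices (Lemma \ref{L:peaktrick}) for $\ker\pi\subset\ker\beta$.

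The genuine gap is step 4. The obstacle you flag---strict positivity of $1-a^*a$ in $\ker\pi$---is in fact \emph{not} an obstacle: any state $\omega$ on $\ker\pi$ extends to a state $\tilde\omega$ on $\rC^*(\A)$, and if $\tilde\omega(a^*a)=1$ then the peak property gives $\tilde\omega(\fs_\pi)=1$, whence $\tilde\omega|_{\ker\pi}=0$, a contradiction. So $1-a^*a$ \emph{is} strictly positive in $\ker\pi$ and the functional calculus does produce an approximate unit for $\ker\pi$ inside $\rC^*(\A\cap\ker\pi)$. The problem is the next step: an approximate unit only shows that $\ker\pi$ equals the closed \emph{ideal} generated by $1-a^*a$ (equivalently, by $1-a$), but $\rC^*(\A\cap\ker\pi)$ is not obviously an ideal in $\rC^*(\A)$, nor even in $\ker\pi$---for $b\in\A$ and $y\in\A\cap\ker\pi$ there is no reason for $b\,y^*$ to lie in $\rC^*(\A\cap\ker\pi)$. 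So the approximate-unit argument stalls. The paper circumvents this entirely with a bidual argument (Lemma \ref{L:kerA}): the peak hypothesis gives $\fs_\pi\in\A^{\perp\perp}$ (via \cite{hay2007}), and since $I-\fs_\pi$ is central and closed, \cite[Theorem 3.3]{blecher2013} yields $\A^{\perp\perp}(I-\fs_\pi)=(\A\cap\ker\pi)^{\perp\perp}$. Because $t\mapsto t(I-\fs_\pi)$ is a $*$-homomorphism, one gets $\ker\pi\subset\rC^*(\A(I-\fs_\pi))\subset\rC^*(\A\cap\ker\pi)^{\perp\perp}$, and the norm-closedness of $\rC^*(\A\cap\ker\pi)$ finishes the job.
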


We show in Example \ref{E:singleton} that this result is sharp, in the sense that the set $\sB_\pi$  cannot generally be taken be a singleton (or even finite), and that the conclusion cannot be improved to $\fJ_\pi=\ker \pi$. In particular, there are irreducible $*$-representations whose support projection is an $\A$-peak projection, yet the representations do not lie in the Choquet boundary of $\A$.

In Section \ref{S:Bishop}, we are interested in the reverse problem, namely whether a boundary representation is automatically a non-commutative peak point. More generally, we seek non-commutative versions of the theorem of Bishop referred to above. 
We say that $\A$ has the \emph{Bishop property} in $\rC^*(\A)$ whenever all boundary representations for $\A$ are local $\A$-peak representations. We show in Corollary \ref{C:C*Bishop} that any unital $\rC^*$-algebra has the Bishop property in itself. Example \ref{E:nonBishop} exhibits an example of a unital operator algebra that fails to have the Bishop property, thereby illustrating that the most naive non-commutative generalization of Bishop's theorem does not hold true. Nevertheless, a rich source of well-studied and important algebras with this property comes from function theoretic operator theory, in the form of multiplier algebras of unitarily invariant reproducing kernel Hilbert spaces on the unit ball. In Theorem \ref{T:AHBishop}, we establish the following.

\begin{theoremx}\label{T:B}
Let $\fF$ be a regular, unitarily invariant, complete Pick space on the unit ball. Let $\rA(\fF)\subset B(\fF)$ denote the norm-closure of the polynomial multipliers. Then, $\rA(\fF)$ has the Bishop property inside of $\rC^*(\rA(\fF))$.
\end{theoremx}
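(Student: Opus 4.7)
Because $\fF$ is a regular, unitarily invariant, complete Pick space on $\bB_d$, the tuple $(M_{z_1},\dots,M_{z_d})$ is essentially normal: $\rC^*(\rA(\fF))$ contains the compact operators as an essential ideal whose quotient is commutative and isomorphic to $\rC(X)$ for some closed $X \subseteq \bS_d$. The irreducible $*$-representations of $\rC^*(\rA(\fF))$ are therefore, up to unitary equivalence, the identity representation $\iota$ on $\fF$ and the evaluation characters $\operatorname{ev}_\zeta$ for $\zeta \in X$. Appealing to the known structure of the $\rC^*$-envelope in this setting, the boundary representations of $\rA(\fF)$ are drawn from this list, and the Bishop property reduces to showing that each one is a local $\rA(\fF)$-peak representation.

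For a character boundary representation $\pi = \operatorname{ev}_\zeta$, I would take the scalar multiplier $B_\zeta = (1+\langle z,\zeta\rangle)/2 \in \rA(\fF)$. Unitary invariance gives $\|M_{\langle z,\zeta\rangle}\| = \|M_{z_1}\| = 1$, and evaluating at the kernel $k_\zeta$ provides the matching lower bound, so $\|M_{B_\zeta}\| = 1$. A routine strict Cauchy-Schwarz argument yields $|B_\zeta(\mu)| < 1$ for every $\mu \in X \setminus \{\zeta\}$, which handles the competing characters. The delicate case is the identity: one must show $\|P_F M_{B_\zeta}|_F\| < 1$ for every finite-dimensional $F \subset \fF$, and by compactness this reduces to $\|M_{B_\zeta} v\| < 1$ for every unit $v \in \fF$. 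Should $M_{B_\zeta}^* M_{B_\zeta} v = v$ hold, then taking the inner product with $v$ and exploiting $\|M_{\langle z,\zeta\rangle}\| \leq 1$ forces the real-part identity $\re\langle M_{\langle z,\zeta\rangle} v, v\rangle + \tfrac12 \|M_{\langle z,\zeta\rangle}v\|^2 = \tfrac32$, and hence $M_{\langle z,\zeta\rangle} v = v$; since $\langle z,\zeta\rangle - 1$ is nonvanishing on $\bB_d$ by Cauchy-Schwarz, we conclude $v \equiv 0$, a contradiction.

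If $\iota$ is a boundary representation (the case $d \geq 2$), I would seek matrix-valued $B \in M_n(\rA(\fF))$ with $\|B\| = 1$ but $\sup_{\zeta \in X}\|B(\zeta)\|_{M_n} < 1$. Any such $B$ suffices, since every alternative irreducible representation is a character $\operatorname{ev}_\zeta$ and the required bound $\|P_F \operatorname{ev}_\zeta^{(n)}(B)|_F\| \leq \|B(\zeta)\| < 1$ is then automatic. The existence of $B$ is forced by the boundary property of $\iota$: otherwise, the composition $\rA(\fF) \hookrightarrow \rC^*(\rA(\fF)) \twoheadrightarrow \rC(X)$ would be completely isometric at every matrix level, and the universal property of the $\rC^*$-envelope would render $\rC^*_e(\rA(\fF))$ a commutative quotient of $\rC(X)$, contradicting that $\iota$ is a boundary representation acting on an infinite-dimensional space.

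The principal obstacle is the identity case: the required matrix-valued multiplier whose multiplier norm strictly exceeds its sup norm on $X$ reflects a genuinely non-commutative feature of complete Pick multiplier algebras that fails in the scalar case. Its existence is ensured abstractly by the boundary property of $\iota$, but the underlying reason is the failure of a matricial von Neumann inequality for the coordinate tuple on $\fF$, and any attempt at an explicit construction would have to leverage precisely this failure.
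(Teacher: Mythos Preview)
Your treatment of the identity representation is essentially the paper's (packaged there as Lemma~\ref{L:Kbdrypeak}): if $\iota$ is a boundary representation, Arveson's boundary theorem yields $B\in\bM_n(\rA(\fF))$ with $\|q^{(n)}(B)\|<\|B\|$, and since every other irreducible representation factors through $q$, this $B$ exhibits $\iota$ as a local $\rA(\fF)$-peak representation.

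The gap is in the character case. The assertion $\|M_{z_1}\|=1$ is false in general: in one variable one has $\|M_z\|^2=\sup_n a_n/a_{n+1}$, and for the Dirichlet space (with $a_n=1/(n+1)$) this equals $2$. Once $\|M_{\langle z,\zeta\rangle}\|>1$, your function $B_\zeta=(1+\langle z,\zeta\rangle)/2$ can have multiplier norm exceeding $1$ (a short computation with $f=1+\tfrac12 z$ in the Dirichlet space confirms this), so $\chi_\zeta(B_\zeta)=1$ is no longer the norm of $B_\zeta$ and the peaking argument collapses; your Cauchy--Schwarz equality step for the identity compression likewise relies on $\|M_{\langle z,\zeta\rangle}\|\le 1$. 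The paper proceeds differently: it invokes \cite{DH2020} (this is where the complete Pick hypothesis is genuinely used) to obtain a contractive $a\in\rA(\fF)$ with $a(\zeta)=1$ and $|a|<1$ elsewhere on $\bS_d$, and then sets $b=\sum_{n\ge 1}2^{-n}a^n$. Since $|a|<1$ on $\bB_d$, the maximum principle gives $M_a^n\to 0$ weak-$*$, hence $P_F M_a^n|_F\to 0$ in norm for each finite-dimensional $F$, and therefore $\|P_F M_b|_F\|<1$. Your eigenvector trick is a pleasant shortcut when the coordinate multipliers happen to be contractions (Hardy, Drury--Arveson), but it does not cover the full class in the statement.
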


Examples of spaces to which this result applies include the Hardy space on the unit disc, the Dirichlet space on the unit disc, as well as the Drury--Arveson space on the unit ball. These spaces and their multiplier algebras have generated much research interest in recent years; see for instance \cite{CD2016duality},\cite{CH2018},\cite{CT2021},\cite{DH2020} and references therein. We also show in Theorem \ref{T:tensorBishop} that, under some regularity conditions, the Bishop property is preserved by tensor products, thus allowing us to produce more examples (Corollary \ref{C:tensorBishop}).

As mentioned previously, the main driving force of this paper is to determine whether the Choquet boundary of an operator algebra is a minimal boundary. Accordingly, we call a set $\Delta$ of irreducible $*$-representations of $\rC^*(\A)$ a \emph{boundary} for $\A$ if for every integer $n\geq 1$ and for every $B\in \bM_n(\A)$ there is an element of $\Delta$, say $\pi:\rC^*(\A)\to B(\H_\pi)$, and a finite-dimensional subspace $F\subset \H_\pi^{(n)}$ such that 
\[
\|P_F \pi^{(n)}(B)|_F\|=\|B\|.
\]
In Theorem \ref{T:minbdry}, we show that the Choquet boundary of an operator algebra is indeed a minimal boundary, at least when the Bishop property is satisfied. 

\begin{theoremx}\label{T:C}
The following statements are equivalent.
\begin{enumerate}[{\rm (i)}]
\item The algebra $\A$ has the Bishop property inside of $\rC^*(\A)$.
\item A set $\Delta$ of irreducible $*$-representations of $\rC^*(\A)$ is a boundary for $\A$ if and only if every boundary representation for $\A$ is unitarily equivalent to some element of $\Delta$.
\end{enumerate}
\end{theoremx}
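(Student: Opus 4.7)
The plan is to prove both directions of the equivalence, treating (ii) as a pair of implications. For (ii) $\Rightarrow$ (i), fix a boundary representation $\beta$ and let $\Delta$ consist of one representative from each unitary equivalence class of irreducible $*$-representations of $\rC^*(\A)$ other than the class of $\beta$. By (ii), $\Delta$ fails to be a boundary, so negating the definition produces $B \in \bM_n(\A)$ with $\|B\|=1$ and $\|P_F \sigma^{(n)}(B)|_F\| < 1$ for every $\sigma \in \Delta$ and every finite-dimensional $F \subset \H_\sigma^{(n)}$; this is precisely the local $\A$-peak witness for $\beta$, establishing the Bishop property.

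For (i) $\Rightarrow$ (ii), the two implications of (ii) are handled separately. The ``forward'' direction ($\Delta$ a boundary $\Rightarrow$ $\Delta$ contains every boundary representation up to equivalence) is where (i) is consumed: given a boundary representation $\beta$, use (i) to produce a local $\A$-peak element $B \in \bM_n(\A)$ with $\|B\|=1$; since $\Delta$ is a boundary, there exist $\pi \in \Delta$ and finite-dimensional $F \subset \H_\pi^{(n)}$ with $\|P_F \pi^{(n)}(B)|_F\|=1$, and the strict peak inequality immediately forces $\pi$ to be unitarily equivalent to $\beta$.

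The ``backward'' direction ($\Delta \supseteq$ all boundary representations $\Rightarrow$ $\Delta$ is a boundary) is the main obstacle and, interestingly, does not require the Bishop property. Given $B \in \bM_n(\A)$ with $\|B\|=1$, choose a pure state $\omega$ on $\bM_n(\rC^*(\A))$ with $\omega(B^*B)=1$; GNS then produces an irreducible $*$-representation $\sigma$ of $\rC^*(\A)$ and a unit vector $\xi \in \H_\sigma^{(n)}$ with $\|\sigma^{(n)}(B)\xi\|=1$, so that $F_0 := \spn\{\xi, \sigma^{(n)}(B)\xi\}$ satisfies $\|P_{F_0}\sigma^{(n)}(B)|_{F_0}\|=1$. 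Form the UCP map $\phi(T) = P_{F_0}\sigma^{(n)}(T)|_{F_0}$ from $\bM_n(\rC^*(\A))$ to $B(F_0)$, and apply the dilation machinery of Dritschel--McCullough and Davidson--Kennedy to $\phi|_{\bM_n(\A)}$, using the standard identification of boundary representations of $\bM_n(\A)$ with $n$-fold amplifications $\pi^{(n)}$ of boundary representations $\pi$ of $\A$: this produces a $*$-representation $\td\psi = \bigoplus_\alpha \pi_\alpha^{(n)}: \bM_n(\rC^*(\A)) \to B(\td\K)$ dilating $\phi$, with $F_0 \subset \td\K$, each $\pi_\alpha$ a boundary representation of $\A$, and $\phi(T) = P_{F_0}\td\psi(T)|_{F_0}$ for $T \in \bM_n(\A)$. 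Since $\|\td\psi(B)\| \leq \|B\|=1$ yet $\|P_{F_0}\td\psi(B)|_{F_0}\|=1$, the operator $\td\psi(B)$ has norm $1$ and attains it at some unit vector $\xi = \bigoplus_\alpha \xi_\alpha \in F_0$. The Pythagorean estimate
\[
1 = \|\td\psi(B)\xi\|^2 = \sum_\alpha \|\pi_\alpha^{(n)}(B)\xi_\alpha\|^2 \leq \sum_\alpha \|\xi_\alpha\|^2 = 1
\]
forces $\|\pi_\alpha^{(n)}(B)\xi_\alpha\| = \|\xi_\alpha\|$ for every $\alpha$. Picking any $\alpha$ with $\xi_\alpha \neq 0$, the subspace $F' := \spn\{\xi_\alpha, \pi_\alpha^{(n)}(B)\xi_\alpha\} \subset \H_{\pi_\alpha}^{(n)}$ satisfies $\|P_{F'}\pi_\alpha^{(n)}(B)|_{F'}\|=1$; transporting $F'$ along a unitary equivalence from $\pi_\alpha$ to its representative in $\Delta$ completes the proof. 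The principal technical input is the Davidson--Kennedy dilation theorem, which ensures that the maximal dilation of $\phi|_{\bM_n(\A)}$ genuinely decomposes as a direct sum of boundary representations; everything else is either a pure-state/GNS construction or a clean norm-additivity argument in a direct sum.
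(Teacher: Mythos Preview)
Your route is the paper's route: the arguments you give for (ii)$\Rightarrow$(i) and for the forward half of (ii) under assumption (i) are exactly the content of Lemma~\ref{L:peakandbdry}, which the paper invokes as a black box. For the backward half of (ii) (that any $\Delta$ containing every boundary representation up to equivalence is already a boundary for $\A$) the paper simply cites \cite[Theorem~2.10]{CTh2020fdim}, while you attempt to reprove it.

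That reproof has a genuine gap. You claim that the Dritschel--McCullough/Davidson--Kennedy machinery dilates $\phi|_{\bM_n(\A)}$ to some $\td\psi=\bigoplus_\alpha \pi_\alpha^{(n)}$ with each $\pi_\alpha$ a boundary representation, and you make this explicit in your final sentence: ``the maximal dilation \dots\ genuinely decomposes as a direct sum of boundary representations.'' The Davidson--Kennedy theorem only guarantees that the maximal dilation is a $*$-representation with the unique extension property; it gives no direct-sum decomposition into irreducibles. If $\A=\rC^*(\A)$ is simple, separable, and not type~I, every irreducible representation is a boundary representation, yet a unital completely positive map $\phi:\A\to\bM_2$ may have minimal (hence maximal) Stinespring dilation equal to a type~II or type~III factor representation, which is not even a subrepresentation of a direct sum of irreducibles. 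Since your Pythagorean extraction needs the direct-sum splitting, the argument breaks here.

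The fix keeps your Pythagorean step but supplies a \emph{finite} decomposition by first restricting to a finite-dimensional operator subsystem. Choose a finite-dimensional operator system $\X\subset\bM_n(\A+\A^*)$ with $B\in\X$, and let $\phi_0:\X\to B(F_0)$ be any unital completely positive map with $\|\phi_0(B)\|=1$. By \cite[Theorem~2.9]{HL2021} (as in the proof of Lemma~\ref{L:matconv}), $\phi_0=\sum_{i=1}^r V_i^*\psi_i V_i$ for finitely many pure unital completely positive maps $\psi_i$ on $\X$ and operators $V_i$ with $\sum V_i^*V_i=I$. Now your Pythagorean argument, applied to the isometry $V=[V_1,\ldots,V_r]^t$ and the genuine finite direct sum $\bigoplus_i\psi_i$, produces an index $j$ with $\|\psi_j(B)\|=1$. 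Extend $\psi_j$ to a pure unital completely positive map on $\bM_n(\A+\A^*)$ via \cite[Theorem~B]{farenick2000}, and dilate that to a boundary representation $\beta^{(n)}$ via \cite[Theorem~2.4]{DK2015}; the range of the dilating isometry is the finite-dimensional subspace $F$ you seek.
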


Finally, in Section \ref{S:bdryC*} we study boundaries in the setting of $\rC^*$-algebras. Since the previously defined notion is trivial in this case (Corollary \ref{C:minbdryC*}), we introduce two variations. Let $\fT$ be a $\rC^*$-algebra and let $\Delta$ be a collection of irreducible $*$-representations of $\fT$.  We say that  $\Delta$ is a \emph{maximizing  boundary} for $\fT$ if for every $t\in \fT$ and every irreducible $*$-representation $\pi$, we can find $\delta\in \Delta$ such that 
$
\|\delta(t)\|\geq \|\pi(t)\|.
$
We say that $\Delta$ is a \emph{minimizing boundary} for $\fT$ if
for every $t\in \fT$ and every irreducible $*$-representation $\pi$, we can find $\delta\in \Delta$ such that 
$
\|\delta(t)\|\leq \|\pi(t)\|.
$
In the separable case, these boundaries can be characterized as follows (Theorems \ref{T:maxbdryC*} and \ref{T:minbdryC*})

\begin{theoremx}\label{T:D}
Assume that $\fT$ is separable. Then, the following statements hold.
\begin{enumerate}[{\rm (1)}]
\item The set  $\Delta$ is a maximizing boundary for $\fT$ if and only if the kernel of any irreducible $*$-representation of $\fT$ contains the kernel of some element of $\Delta$. 
\item The set  $\Delta$ is a minimizing boundary for $\fT$ if and only if the kernel of any irreducible $*$-representation of $\fT$ is contained in the kernel of some element of $\Delta$. \end{enumerate}
\end{theoremx}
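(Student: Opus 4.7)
For both parts, the direction from kernel containment to the boundary condition is immediate: if $\ker\delta \subseteq \ker\pi$ then $\fT/\ker\pi$ is a $\rC^*$-quotient of $\fT/\ker\delta$, so $\|\pi(t)\| = \|t + \ker\pi\| \leq \|t + \ker\delta\| = \|\delta(t)\|$ for every $t \in \fT$, and this single $\delta$ witnesses the maximizing condition; the minimizing case is symmetric with the inequality reversed. The real content of the theorem is the converse, where separability is essential in extracting a single $\delta$ that works uniformly over all $t$ rather than one that depends on $t$.

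For the hard direction of part (1), I would fix $\pi$ with $\ker\pi = J_0 \in \Prim(\fT)$ and first reduce to the case of a minimal primitive ideal by Zorn's lemma. This application of Zorn is permissible because separability forces primitive to coincide with prime in $\fT$, so intersections of chains of primitive ideals remain prime hence primitive. This yields a minimal $J_\ast \subseteq J_0$, and it suffices to show $J_\ast = \ker\delta$ for some $\delta \in \Delta$, since such a $\delta$ automatically satisfies $\ker\delta = J_\ast \subseteq J_0$. Arguing by contradiction, if no $\delta \in \Delta$ has kernel $J_\ast$ then each $\ker\delta$ either strictly contains $J_\ast$ or is incomparable with $J_\ast$. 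My plan is to construct a positive element $a \in \fT^+$ whose lower semicontinuous norm function $\hat{a}(K) = \|a+K\|$ on $\Prim(\fT)$ satisfies $\hat{a}(J_\ast) = \|a\|$ while $\hat{a}(\ker\delta) < \|a\|$ for every $\delta \in \Delta$, contradicting the maximizing hypothesis via $\|\delta(a)\| < \|\pi_\ast(a)\|$ for every $\delta$. The construction of $a$ will exploit second countability of $\Prim(\fT)$ (a consequence of separability of $\fT$): I would take a decreasing countable neighbourhood basis $V_n = \{K \in \Prim(\fT) : I_n \not\subseteq K\}$ at $J_\ast$, select positive elements $a_n \in I_n \setminus J_\ast$ with controlled norms, and assemble them into a weighted series $a = \sum_n c_n a_n$, sharpening the peak at $J_\ast$ by functional calculus.

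The hard direction of part (2) proceeds dually: Zorn's lemma on proper ideals containing $\ker\pi$ yields a maximal primitive ideal $M \supseteq \ker\pi$, and the peak element is replaced by a co-peaking element $a \in \fT^+$ lying in $M$ but having $\hat{a}(K) > 0$ for every primitive $K$ incomparable with $M$; the minimizing hypothesis then forces some $\ker\delta$ to equal $M$, giving $\ker\pi \subseteq M = \ker\delta$ as required. The main obstacle in both parts is the explicit construction of the (co-)peaking element: in the commutative case $\Prim(\fT)$ is Hausdorff and peak elements arise naturally from a metric, but in the present non-Hausdorff Jacobson setting $\hat{a}$ is merely lower semicontinuous, and one must use second countability carefully to separate $J_\ast$ (respectively $M$) from the remaining primitive ideals in a way detectable by the $\rC^*$-norm. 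This is precisely the step where the separability hypothesis enters essentially.
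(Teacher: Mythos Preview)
Your easy direction is correct and matches the paper.

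For the hard direction, the passage through Zorn's lemma and minimal/maximal primitive ideals is a detour, and the central step—the construction of the (co-)peaking element—has a genuine gap. In part~(1) you propose a countable neighbourhood basis $V_n=\{K:I_n\not\subseteq K\}$ at $J_*$ and set $a=\sum_n c_n a_n$ with $a_n\in I_n\setminus J_*$. There is no reason this series should satisfy $\hat a(J_*)=\|a\|$: the images $a_n+J_*$ in $\fT/J_*$ need not align, so $\|\sum_n c_n(a_n+J_*)\|$ can be strictly smaller than $\sum_n c_n\|a_n+J_*\|$. Even granting a peak at $J_*$, the neighbourhood basis only tells you that $K\notin V_n$ forces $a_n\in K$, killing one term of the series; the remaining terms may still make $\hat a(K)$ equal to $\hat a(J_*)$. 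Your phrase ``sharpening by functional calculus'' does not resolve either difficulty, and the dual construction for part~(2) inherits the same problems (you also need $\hat a(\ker\delta)>0$ when $\ker\delta\subsetneq M$, a case your sketch does not address).

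The paper's argument bypasses all of this and works directly with the given $\pi$, using neither Zorn nor the Dixmier prime-equals-primitive theorem. For part~(1), assume $\ker\delta\not\subseteq\ker\pi$ for every $\delta\in\Delta$. Then $\pi|_{\ker\delta}$ is a nonzero, hence irreducible, $*$-representation of the ideal $\ker\delta$, and Kadison's transitivity theorem yields a contraction $b_\delta\in\ker\delta$ with $\pi(b_\delta)\xi=\xi$ for a fixed unit vector $\xi\in\H_\pi$. Separability enters only to replace $\{b_\delta:\delta\in\Delta\}$ by a countable dense subset $\{c_n\}$; one then puts $a=\sum_{n}2^{-n}c_n$. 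The alignment problem disappears because $\pi(c_n)\xi=\xi$ for \emph{every} $n$, so $\pi(a)\xi=\xi$ and $\|a\|=1$. For each $\delta$ there is some $n_\delta$ with $\|c_{n_\delta}-b_\delta\|<1$, whence $\|\delta(c_{n_\delta})\|<1$ and a direct estimate gives $\|\delta(a)\|<1$. Part~(2) is symmetric: assuming $\ker\pi\not\subseteq\ker\delta$ for all $\delta$, one applies Kadison transitivity to $\delta|_{\ker\pi}$ to obtain positive $b_\delta\in\ker\pi$ with $\delta(b_\delta)\eta_\delta=\eta_\delta$, passes to a countable dense family, and checks that the resulting $a\in\ker\pi$ satisfies $\delta(a)\neq 0$ for every $\delta$. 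The key tool you are missing is Kadison transitivity applied to the restriction of one representation to the kernel of another; once you have it, no ordering argument on $\Prim\fT$ is needed.
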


As an application, in Corollary \ref{C:CS} we recover and provide a new proof of a recent result of Courtney and Shulman \cite[Theorem 4.4]{CS2019} that characterizes $\rC^*$-algebras for which all irreducible $*$-representations are finite-dimensional.

\section{Non-commutative peak points}\label{S:ncpeak}

In this section, we define the different types of non-commutative peak points that will be utilized throughout the paper, and establish new connections between them. As mentioned in the introduction, our analogues of peak points will come in three different varieties: states, representations and projections.

We will start with states. The following elementary fact will help motivate our definition.

\begin{lemma}\label{L:peakstatepure}
Let $\fT$ be a unital $\rC^*$-algebra and let $\omega$ be a state. Assume that there is a positive element $t\in \fT$ such that $\omega(t)>\psi(t)$ for every pure state $\psi$ on $\fT$ distinct from $\omega$. Then, $\omega$ is pure and $\omega(t)>\phi(t)$ for any state $\phi$ on $\fT$ distinct from $\omega$.
\end{lemma}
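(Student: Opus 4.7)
The plan is to work with the state space $S(\fT)$ of $\fT$ as a weak-* compact convex set and identify the face on which $t$ attains its maximum. Setting $c := \omega(t)$, the first step is to establish the a priori bound $\phi(t) \le c$ for \emph{every} state $\phi$ on $\fT$, not just the pure ones. By Krein--Milman, the state space of $\fT$ is the weak-* closed convex hull of its pure states, so convex combinations of pure states are weak-* dense in $S(\fT)$. Since $\psi(t) \le c$ for every pure state $\psi$ by hypothesis, weak-* continuity of evaluation at $t$ propagates this upper bound to arbitrary states.

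With this in hand, I would introduce
\[
F := \{\phi \in S(\fT) : \phi(t) = c\},
\]
a non-empty weak-* closed convex subset of $S(\fT)$ containing $\omega$. The bound from the first step makes $F$ a face of $S(\fT)$: if $\phi = \lambda \phi_1 + (1-\lambda)\phi_2 \in F$ with states $\phi_1, \phi_2$ and $0 < \lambda < 1$, then the identity $c = \lambda \phi_1(t) + (1-\lambda)\phi_2(t)$ combined with $\phi_i(t) \le c$ forces equality, so $\phi_1, \phi_2 \in F$.

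Finally, Krein--Milman produces an extreme point of the weak-* compact convex set $F$. Because $F$ is a face of $S(\fT)$, every extreme point of $F$ is automatically an extreme point of $S(\fT)$, i.e., a pure state of $\fT$. The hypothesis then pins down $\omega$ as the only pure state lying in $F$, so $\omega$ must itself be pure and is the unique extreme point of $F$. A second invocation of Krein--Milman collapses $F$ to $\{\omega\}$, which delivers the strict inequality $\phi(t) < c$ for every state $\phi \ne \omega$, and hence both conclusions at once. The only real subtlety I anticipate is ensuring $F$ is genuinely a face, which is precisely why the argument must first extend the hypothesis $\psi(t) \le c$ from pure states to all states; without this, the convex-combination identity for $\phi \in F$ would not force $\phi_i \in F$.
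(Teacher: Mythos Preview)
Your proposal is correct and follows essentially the same Krein--Milman face argument as the paper. The only difference is cosmetic: the paper defines the face at level $\|t\|$ rather than at $c=\omega(t)$, which makes the inequality $\phi(t)\le \|t\|$ automatic for positive $t$ and thereby eliminates your preliminary step of propagating the bound from pure states to all states.
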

\begin{proof}
Consider the set $\E$ of states $\phi$ on $\fT$ such that $\phi(t)=\|t\|$.  It is readily verified that $\E$ is a non-empty, weak-$*$ closed face of the state space of $\fT$. The assumption on $\omega$ implies that $\E$ can have at most one extreme point, namely $\omega$ itself. In turn, an application of the Krein--Milman theorem yields that $\E=\{\omega\}$, and both statements follow from this.
\end{proof}

Let $\A\subset B(\H)$ be a unital operator algebra and let $\omega$ be a state on $\rC^*(\A)$. We say that $\omega$ is an \emph{$\A$--peak state} if there is a contraction $a\in \A$ such that
$
\omega(a^*a)>\phi(a^*a)
$
for every pure state $\phi$ on $\rC^*(\A)$ with $\phi\neq \omega$. By Lemma \ref{L:peakstatepure}, it then follows that $\omega$ is necessarily pure, that $\omega(a^*a)=\|a\|^2$, and that $\omega(a^*a)>\phi(a^*a)$ for any state $\phi$ on $\fT$ distinct from $\omega$.  This notion of peak state  differs from that found in \cite{clouatre2018lochyp}, where operator systems (rather than algebras) were the focal point. No use of this other version will be made in our paper, so no confusion should arise.

Before introducing our next notion, we record another elementary fact.

\begin{lemma}\label{L:locpeakrep}
Let $\fT$ be a unital $\rC^*$-algebra, let $\pi:\fT\to B(\H_\pi)$ be an irreducible $*$-representation and let $t\in \fT$. Then, the following statements are equivalent.
\begin{enumerate}[{\rm (i)}]
\item We have 
$
\|P_G \sigma(t)|_G\|<\|t\|
$
for every irreducible $*$-representation $\sigma:\fT\to B(\H_\sigma)$  unitarily inequivalent to $\pi$ and every finite-dimensional subspace $G\subset \H_\sigma$. 

\item   There is a finite-dimensional subspace $F\subset \H_\pi$ such that
\[
\|t\|=\|P_F\pi(t)|_F\|>\|P_G \sigma(t)|_G\|
\]
for every irreducible $*$-representation $\sigma:\fT\to B(\H_\sigma)$ unitarily inequivalent to $\pi$ and every finite-dimensional subspace $G\subset \H_\sigma$. 
\end{enumerate}
\end{lemma}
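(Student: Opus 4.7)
The implication (ii) $\Rightarrow$ (i) is essentially automatic: if such an $F$ exists, then in particular $\|t\|=\|P_F\pi(t)|_F\|$, and the strict inequality in (ii) immediately yields (i). All the real work is in proving (i) $\Rightarrow$ (ii), where we must produce a finite-dimensional subspace of $\H_\pi$ on which $\pi(t)$ attains the norm $\|t\|$. Note that a priori, condition (i) does \emph{not} rule out the existence of other irreducible representations $\sigma\not\sim\pi$ with $\|\sigma(t)\|=\|t\|$; it only forbids this norm from being attained on a finite-dimensional compression of $\sigma$. Thus the challenge is to isolate the representation where the norm is actually attained on a finite-dimensional piece.

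The plan is to produce a pure state that attains the maximum value on $t^*t$. Consider the set
$$\E=\{\omega : \omega \text{ is a state on }\fT,\ \omega(t^*t)=\|t\|^2\}.$$
Since $\|t\|^2=\|t^*t\|=\sup_\omega\omega(t^*t)$, the weak-$*$ continuity of $\omega\mapsto\omega(t^*t)$ and the weak-$*$ compactness of the state space show that $\E$ is non-empty and weak-$*$ closed. A routine check confirms that $\E$ is a face of the state space of $\fT$. By the Krein--Milman theorem, $\E$ has an extreme point $\psi$, and since $\E$ is a face, $\psi$ is extreme in the whole state space, i.e., $\psi$ is a pure state.

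Applying GNS to $\psi$ produces an irreducible $*$-representation $\sigma:\fT\to B(\H_\sigma)$ with a unit cyclic vector $\xi\in\H_\sigma$ satisfying $\|\sigma(t)\xi\|^2=\inner{\sigma(t^*t)\xi}{\xi}=\psi(t^*t)=\|t\|^2$. Setting $\eta=\sigma(t)\xi/\|t\|$, which is a unit vector, we have $\inner{\sigma(t)\xi}{\eta}=\|\sigma(t)\xi\|^2/\|t\|=\|t\|$. Letting $F=\spn\{\xi,\eta\}\subset\H_\sigma$, which has dimension at most $2$, we obtain
$$\|P_F\sigma(t)|_F\|\geq |\inner{\sigma(t)\xi}{\eta}|=\|t\|,$$
and the reverse inequality is automatic, so $\|P_F\sigma(t)|_F\|=\|t\|$.

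Finally, we invoke hypothesis (i): the equality $\|P_F\sigma(t)|_F\|=\|t\|$ forces $\sigma$ to be unitarily equivalent to $\pi$, since otherwise (i) would demand a strict inequality. Choosing a unitary $U:\H_\pi\to\H_\sigma$ intertwining $\pi$ and $\sigma$ and setting $F'=U^*F\subset\H_\pi$, a direct computation gives $\|P_{F'}\pi(t)|_{F'}\|=\|P_F\sigma(t)|_F\|=\|t\|$. Combined with (i), this subspace $F'$ satisfies both requirements of (ii). The main technical obstacle is the one bridging Step~2 and Step~3: promoting the existence of a norm-attaining pure state (an abstract Krein--Milman argument on $\fT$) into \emph{operator-norm} attainment on a concrete finite-dimensional compression, which we carry out via the two-dimensional span $\spn\{\xi,\sigma(t)\xi/\|t\|\}$.
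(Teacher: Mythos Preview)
Your proof is correct and follows essentially the same route as the paper's: both pick a pure state $\psi$ with $\psi(t^*t)=\|t\|^2$, pass to its GNS representation, and use the two-dimensional subspace $\spn\{\xi,\sigma(t)\xi\}$ to realize the norm on a finite-dimensional compression, then invoke (i) to force unitary equivalence with $\pi$. The only difference is that you spell out the existence of such a pure state via the Krein--Milman/face argument on $\E$, whereas the paper simply asserts it as a known fact.
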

\begin{proof}
Assume that (i) holds. Let $\psi$ be a pure state on $\fT$ with the property that $\psi(t^*t)=\|t\|^2$, and let $\rho:\fT\to B(\H_\rho)$ denote its GNS representation with distinguished cyclic unit vector $\xi\in \H_\rho$. It is well known that $\rho$ is irreducible since $\psi$ is pure.  Let $X=\spn\{\xi,\rho(t)\xi\}\subset \H_\rho$. Then,
\[
\|P_X \rho(t)|_X\|\geq \|\rho(t)\xi\|=\psi(t^*t)^{1/2}=\|t\|.
\]
By assumption, this forces $\rho$ to be unitarily equivalent to $\pi$, so that (ii) holds.
\end{proof}

If $\A\subset B(\H)$ is an operator algebra, then for every positive integer $n\geq 1$, we then denote by $\bM_n(\A)\subset B(\H^{(n)})$ the operator algebra consisting of $n\times n$ matrices with entries in $\A$. Here and below, $\H^{(n)}=\H\oplus \H\oplus \ldots \oplus \H$. Given a linear map $\phi:\A\to B(\H_\phi)$, we denote by $\phi^{(n)}:\bM_n(\A)\to B(\H^{(n)}_\phi)$ its natural ampliation.

We now return to our discussion of non-commutative peak points, by recalling a notion introduced in \cite{CTh2020fdim}. We say that an irreducible $*$-representation $\pi$ of $\rC^*(\A)$ is a \emph{local $\A$-peak representation}  if there is $n\geq 1$ and $B\in \bM_n(\A)$ with $\|B\|=1$ such that
\[
1>\|P_G \sigma^{(n)}(B)|_G\|
\]
for every irreducible $*$-representation $\sigma:\rC^*(\A)\to B(\H_\sigma)$ unitarily inequivalent to $\pi$ and every finite-dimensional subspace $G\subset \H^{(n)}_\sigma$. As seen in Lemma \ref{L:locpeakrep}, this automatically implies that there is a finite-dimensional subspace $F\subset \H^{(n)}_\pi$ such that $1=\|P_F\pi^{(n)}(B)|_F\|$. 

The notion of a local $\A$-peak representation is genuinely weaker than that of an $\A$-peak representation introduced in \cite{arveson2010,arveson2011}: an example can be found in \cite[Example 2]{CTh2020fdim}. Alternatively, this can also be observed by choosing $\A$ to be the Cuntz $\rC^*$-algebra $\O_2$, which is known to be simple, and hence to admit no $\O_2$-peak representations at all. On the other hand, local $\O_2$-peak representations necessarily exist by virtue of Theorem \ref{T:peakdiff} below.

Before we can state this result, we introduce yet another version of non-commuta\-tive peak points. For this purpose, recall that the second dual of any $\rC^*$-algebra $\fT$ can be given the structure of a von Neumann algebra containing $\fT$ as a weak-$*$ dense $\rC^*$-subalgebra. Further, given a $*$-representation $\pi:\fT\to B(\H_\pi)$, the map $\pi^{**}:\fT^{**}\to B(\H_\pi)^{**}$ is a weak-$*$ continuous $*$-representation. Since $B(\H_\pi)$ is a dual space, there is also a weak-$*$ continuous $*$-representation $\widehat\pi:\fT^{**}\to B(\H_\pi)$ extending $\pi$. The reader should consult  \cite[Section A.5]{BLM2004} for more detail on these topics.

We say that a projection $q\in \rC^*(\A)^{**}$ is an \emph{$\A$-peak projection} if there is a contraction $a\in \A$ such that $aq=q$ and   for every unital completely positive map $\psi:\rC^*(\A)\to B(\H_\psi)$ with $\H_\psi$ finite-dimensional and $\|\psi(a^*a)\|=1$, we must have $\|\psi(q)\|=1$. 
In this case, we say that $a$ \emph{peaks} at $q$. 
The concept of a peak projection was first introduced in \cite{hay2007}, although the definition used there is (superficially) weaker than the one we gave above. We explain below how the two definitions actually coincide, at least for separable algebras. 

Peak projections can be characterized very neatly by a non-commutative analogue of a classical theorem of Glicksberg \cite[Theorem II.12.7]{gamelin1969}. Such a characterization requires the use of Akemann's non-commutative topology \cite{akemann1969,akemann1970}.
A projection $q\in \rC^*(\A)^{**}$ is \emph{closed} if there is a net $(t_i)$ of contractions in $\rC^*(\A)$ that decreases to $q$ in the weak-$*$ topology of $ \rC^*(\A)^{**}$. Equivalently, $q$ is closed precisely when there is a closed left ideal $\fJ\subset \rC^*(\A)$ such that $\fJ^{\perp\perp}=\rC^*(\A)^{**}(I-q)$.

\begin{theorem}\label{T:ncGlick}
Let $\A\subset B(\H)$ be a separable unital operator algebra, and let $q\in \rC^*(\A)^{**}$ be a closed projection. Then, $q$ is an $\A$-peak projection if and only if $q\in \A^{\perp\perp}$.
\end{theorem}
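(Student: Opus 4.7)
My plan is to invoke Hay's theorem from \cite{hay2007}, which in the separable setting characterizes the closed projections $q\in\rC^*(\A)^{**}$ lying in $\A^{\perp\perp}$ as exactly those for which there exists a contraction $a\in\A$ with $aq=q$ and $a^n\to q$ in the weak-$*$ topology of $\rC^*(\A)^{**}$. With this result in hand, the task reduces to showing that Hay's weak-$*$ limit condition is equivalent to our UCP-based definition of an $\A$-peak projection.

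For the $(\Leftarrow)$ direction, I would take the contraction $a$ supplied by Hay's theorem and verify our UCP condition. Let $\psi:\rC^*(\A)\to B(\H_\psi)$ be unital completely positive with $\H_\psi$ finite-dimensional and $\|\psi(a^*a)\|=1$. Since $B(\H_\psi)$ is finite-dimensional, the unique weak-$*$ continuous extension $\widehat\psi:\rC^*(\A)^{**}\to B(\H_\psi)$ converts $a^n\to q$ into $\psi(a^n)\to\widehat\psi(q)$ in norm. Choose a unit vector $\xi\in\H_\psi$ with $\psi(a^*a)\xi=\xi$, and pass to a Stinespring dilation $\psi=V^*\pi V$ so that $V\xi$ lies in the spectral subspace $E:=\ker(I-\pi(a^*a))$. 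The relation $aq=q$ lifts to $\pi(a)\widehat\pi(q)=\widehat\pi(q)$, giving $\mathrm{range}(\widehat\pi(q))\subset E$. The reverse containment follows by combining the monotone decrease of $\|\pi(a)^n v\|$ for $v\in E$ with the weak convergence $\pi(a)^n v\to\widehat\pi(q)v$ arising from applying $\widehat\pi$ to $a^n\to q$. This produces $\widehat\pi(q)V\xi=V\xi$, hence $\widehat\psi(q)\xi=V^*V\xi=\xi$ and $\|\widehat\psi(q)\|=1$.

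For the $(\Rightarrow)$ direction, suppose $q$ is an $\A$-peak projection witnessed by a contraction $a$. By the Banach--Alaoglu theorem the contractive sequence $(a^n)$ admits weak-$*$ cluster points in $\rC^*(\A)^{**}$. Any such cluster point $p$ satisfies $ap=p$ and $pq=q$ by separate weak-$*$ continuity of multiplication, and moreover $p\in\A^{\perp\perp}$ as a weak-$*$ limit of elements of $\A$. To show that $(a^n)$ actually converges to $q$, it suffices to argue $p=q$ for every such cluster point. For a state $\omega$ on $\rC^*(\A)$, I would apply our UCP peak condition to $\omega$ and to finite-dimensional compressions of its GNS representation along an increasing sequence of subspaces containing the cyclic vector; coupled with $pq=q$, this forces $\omega^{**}(p)=\omega^{**}(q)$. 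A Hahn--Banach separation argument over all states then yields $p=q$, so $a^n\to q$ weak-$*$ and $q\in\wsclos{\A}=\A^{\perp\perp}$.

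I expect the main obstacle to lie in the $(\Leftarrow)$ direction, specifically in establishing the spectral inclusion $E\subset\mathrm{range}(\widehat\pi(q))$. While the opposite inclusion is immediate from $aq=q$, the reverse requires reconciling the weak operator convergence $\pi(a)^n v\to\widehat\pi(q)v$ with the norm monotonicity $\|\pi(a)^{n+1}v\|\leq\|\pi(a)^n v\|$ on $E$, which is subtle because $E$ need not be $\pi(a)$-invariant in general. Carrying this out may require exploiting additional structure of the Hay element $a$ beyond the bare weak-$*$ limit condition, such as a refined spectral-approximation argument or an appeal to separability to produce a subsequence with stronger convergence properties.
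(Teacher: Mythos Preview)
The paper does not give an argument for this theorem; it simply cites \cite[Theorem 3.2 and Corollary 3.3]{CTh2020fdim}, where the authors had already established the result. So there is no ``paper's approach'' to compare against---your proposal is an attempt to reconstruct a proof of that external result via Hay's characterization. That is a natural route, but both directions of your sketch contain real gaps, not merely technical subtleties.

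In the $(\Leftarrow)$ direction, the obstacle you flag is not just delicate---it is genuinely false as stated. From $\pi(a)^n\to\widehat\pi(q)$ in the weak operator topology of $B(\H_\pi)$, one \emph{cannot} conclude that $E=\ker(I-\pi(a^*a))$ is contained in the range of $\widehat\pi(q)$. The unilateral shift $S\in B(\ell^2)$ already shows this: $S^n\to 0$ in WOT and $0$ is a projection fixed by $S$, yet $S^*S=I$ so $E=\ell^2$. Your proposed reconciliation via ``norm monotonicity of $\|\pi(a)^n v\|$ on $E$'' does not help, because $E$ need not be $\pi(a)$-invariant and the monotonicity can be strict. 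What would actually be needed is something like $(a^*a)^n\to q$ weak-$*$ in $\rC^*(\A)^{**}$ (equivalently, that $q$ is the spectral projection of $a^*a$ at $1$ in the bidual), but this does not follow from $a^n\to q$ alone, and the generic Hay element need not have this extra property. A correct argument must either exploit a sharper form of Hay's construction or abandon the passage through $\pi(a)^n$ and work directly in $\rC^*(\A)^{**}$ with a suitably chosen peaking element.

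In the $(\Rightarrow)$ direction, the sketch is too vague to carry weight. You take a weak-$*$ cluster point $p$ of $(a^n)$ and assert that the UCP peak hypothesis forces $\omega^{**}(p)=\omega^{**}(q)$ for every state $\omega$, via ``finite-dimensional compressions of its GNS representation''. But the UCP hypothesis only gives information when $\|\psi(a^*a)\|=1$, and it concludes something about $\psi(q)$, not about $\psi(p)$; it is unclear how you extract any control over $p$ from this. You also have not argued that $p$ is a projection. A more direct route here is to observe that the paper's UCP condition is formally stronger than Hay's original peaking condition, so one can quote \cite[Lemma 3.6 and Proposition 5.6]{hay2007} (as the paper itself does later in Theorem \ref{T:peakdiff}) to obtain $q\in\A^{\perp\perp}$ without ever showing $a^n\to q$.
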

\begin{proof}
This can be found in \cite[Theorem 3.2 and Corollary 3.3]{CTh2020fdim}.
\end{proof}

In particular, the previous theorem combined with \cite[Proposition 5.6]{hay2007} shows that, at least in the separable setting, our notion of $\A$-peak projection coincides with that first introduced in that paper.

\subsection{Connections between the various types of peak points}

In this subsection, we aim to exhibit some relationships between the various types of non-commutative peak points introduced above. 
We start with a known fact that shows how to go from peak projections to local peak representations. If $\pi$ is a $*$-representation of some $\rC^*$-algebra $\fT$, then its \emph{support projection} is the unique central projection $\fs_\pi\in \fT^{**}$ with the property that $(\ker \pi)^{\perp\perp}=\fT^{**}(I-\fs_\pi)$. By definition, this projection is closed.

\begin{theorem}\label{T:locpeak}
Let $\A\subset B(\H)$ be a  unital operator algebra and let $\pi$ be a finite-dimensional irreducible $*$-representation of $\rC^*(\A)$. If $\fs_\pi$ is an $\A$-peak projection, then $\pi$ is a local $\A$-peak representation.
\end{theorem}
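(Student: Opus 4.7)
The plan is to show that $B = a$, viewed as a $1 \times 1$ matrix over $\A$, already witnesses the local $\A$-peak property of $\pi$, where $a \in \A$ is the peaking contraction for $\fs_\pi$. First I would unpack the condition $a\fs_\pi = \fs_\pi$ by applying the weak-$*$ continuous extension $\widehat\pi:\rC^*(\A)^{**}\to B(\H_\pi)$. Since $\widehat\pi(\fs_\pi) = I_{\H_\pi}$ by definition of the support projection, multiplicativity of $\widehat\pi$ forces $\pi(a) = I_{\H_\pi}$; in particular $\|a\| = 1$.

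The crucial structural step is to establish that $\widehat\sigma(\fs_\pi) = 0$ for every irreducible $*$-representation $\sigma$ not unitarily equivalent to $\pi$. Since $\fs_\pi$ is central in $\rC^*(\A)^{**}$ and $\widehat\sigma$ is a weak-$*$ continuous $*$-representation extending the irreducible $\sigma$, its image is the factor $B(\H_\sigma)$, so $\widehat\sigma(\fs_\pi)$ is a central projection in $B(\H_\sigma)$ and must be either $0$ or $I_{\H_\sigma}$. The alternative $\widehat\sigma(\fs_\pi) = I_{\H_\sigma}$ is equivalent to $\ker\pi \subset \ker\sigma$. Here the finite-dimensionality hypothesis enters decisively: $\rC^*(\A)/\ker\pi \cong B(\H_\pi)$ is a simple finite-dimensional algebra whose only irreducible representation, up to unitary equivalence, is the tautological one, forcing $\sigma$ to be unitarily equivalent to $\pi$ and contradicting our assumption.

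With these tools in hand, I would argue by contradiction. Suppose $\|P_G\sigma(a)|_G\| = 1$ for some irreducible $\sigma$ not equivalent to $\pi$ and some finite-dimensional subspace $G \subset \H_\sigma$. Then the compression $\psi(t) := P_G\sigma(t)|_G$ is a UCP map into $B(G)$ with finite-dimensional target, and Kadison's inequality gives $\psi(a)^*\psi(a) \leq \psi(a^*a) \leq I_G$, so $1 = \|\psi(a)\|^2 \leq \|\psi(a^*a)\| \leq 1$. The $\A$-peak property of $\fs_\pi$ then forces $\|\widehat\psi(\fs_\pi)\| = 1$. But by weak-$*$ continuity of $\widehat\psi$ and $\widehat\sigma$, one computes $\widehat\psi(\fs_\pi) = P_G\widehat\sigma(\fs_\pi)|_G = 0$, a contradiction. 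Hence $\|P_G\sigma(a)|_G\| < 1$ for every irreducible $\sigma$ inequivalent to $\pi$ and every finite-dimensional $G \subset \H_\sigma$, so $\pi$ is a local $\A$-peak representation.

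The main obstacle I anticipate is the dichotomy step in the second paragraph: pinpointing that the finite-dimensionality of $\pi$ is precisely what rules out $\widehat\sigma(\fs_\pi) = I_{\H_\sigma}$ via simplicity of the quotient. Without this hypothesis, inequivalent irreducible representations could share the kernel of $\pi$ and thus fail to be separated by the single contraction $a$, so a more elaborate construction, presumably involving a genuine matrix $B \in \bM_n(\A)$ with $n>1$, would be required.
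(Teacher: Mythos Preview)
Your argument is correct and is essentially the natural one; the paper itself does not give a proof but defers to \cite{CTh2020fdim}, and your reasoning matches the ideas encoded later in the paper as Lemma~\ref{L:peaktrick} (the peak property forces $\widehat\sigma(\fs_\pi)\neq 0$, centrality and irreducibility then give $\widehat\sigma(\fs_\pi)=I$ and hence $\ker\pi\subset\ker\sigma$), followed by the observation that finite-dimensionality of $\pi$ makes $\rC^*(\A)/\ker\pi\cong B(\H_\pi)$ simple with a unique irreducible representation. Your identification of the finite-dimensionality as the crux is also on point: the paper's Example~\ref{E:peakAD} shows precisely that without it the conclusion can fail.
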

\begin{proof}
This is contained in the proof of \cite[Theorem 3.4]{CTh2020fdim}.
\end{proof}

As shown in the discussion following \cite[Theorem 3.4]{CTh2020fdim}, the converse of the previous theorem fails. Furthermore, the finite-dimensionality assumption on $\pi$ cannot be removed, as we illustrate next.

\begin{example}\label{E:peakAD}
Let $H^2$ denote the Hardy space on the open unit disc $\bD\subset \bC$, namely the Hilbert space of analytic functions on $\bD$ with square-summable Taylor coefficients at the origin. The unilateral shift operator can be represented on this space via the isometric operator $S$ of multiplication by the variable $z$. Then, $\rC^*(S)\subset B(H^2)$ is the Toeplitz algebra $\fT_1$. Let $\A_1\subset \fT_1$ denote the unital norm-closed subalgebra generated by $S$. It is well known that $\fT_1$ contains the ideal $\fK$ of compact operators on $H^2$, and that there is a $*$-isomorphism $\fT_1/\fK\cong \rC(\bT)$ sending $S+\fK$ to $z$ (here and throughout, $\bT$ denote the unit circle). It follows from von Neumann's inequality that the quotient map $\fT_1\to\fT_1/\fK$ is completely isometric on $\A_1$, whence the identity representation of $\fT_1$ is not a boundary representation for $\A_1$ by virtue of Arveson's boundary theorem \cite[Theorem 2.1.1]{arveson1972}. Invoking \cite[Theorem 3.1]{CTh2020fdim}, we see that the identity representation cannot be a local $\A_1$-peak representation. Finally, the support projection of the identity representation is simply the unit, which clearly lies in $\A_1^{\perp\perp}$, so that it is an $\A_1$-peak projection by Theorem \ref{T:ncGlick}.
\qed
\end{example}

In light of the previous example, one may ask if \emph{boundary} representations of arbitrary dimension whose support projection is an $\A$-peak projection must necessarily be a local $\A$-peak representation. At present, we do not know the answer to this question. Drawing inspiration from Example \ref{E:peakAD}, one may guess that a source of possible counterexamples would be the natural multivariate generalizations of $\A_1\subset \fT_1$ considered in Section \ref{S:Bishop}. Unfortunately, as we shall see in Theorem \ref{T:AHBishop}, all boundary representations are local $\A$-peak representations in that setting. Alternatively, the Cuntz algebras $\O_n$ (with $n\geq 2$) also seem like a promising source of counterexamples. Indeed, since they are simple $\rC^*$-algebras, the support projection of an irreducible $*$-representation lies in any unital subalgebra. While much is known about the finer structure of the $*$-representations of $\O_n$, we are not aware of any results related to peaking behaviour.

Our next task is to see how peak states relate to the other kinds of peak points. For this purpose, we introduce the following terminology. Let $\fT$ be a unital $\rC^*$-algebra and let $\phi$ be a state on $\fT$. Then, the \emph{left support projection} of $\phi$ is the unique projection $\fl_\phi\in \fT^{**}$ such that
\[
\{x\in \fT^{**}:\phi(x^*x)=0\} =\fT^{**}(I-\fl_\phi).
\]
A basic property that will be required below is the following standard fact.

\begin{lemma}\label{L:supppure}
Let $\fT$ be a unital $\rC^*$-algebra and let $\psi$ be a pure state on $\fT$. If $\phi$ is an arbitrary state on $\fT$, then $\psi=\phi$ if and only if $\phi(\fl_\psi)=1$. 
\end{lemma}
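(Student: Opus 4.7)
The plan is to exploit one standard fact: when $\psi$ is pure, its left support projection is \emph{minimal} in the enveloping von Neumann algebra, i.e., $\fl_\psi\fT^{**}\fl_\psi=\bC\fl_\psi$. Throughout, I silently identify each state on $\fT$ with its unique weak-$*$ continuous extension to $\fT^{**}$.

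The forward direction is immediate from the definition of $\fl_\psi$: because $I-\fl_\psi$ lies in $\fT^{**}(I-\fl_\psi)$ and is a projection, I have
\[
\psi(I-\fl_\psi)=\psi\bigl((I-\fl_\psi)^*(I-\fl_\psi)\bigr)=0,
\]
so $\psi(\fl_\psi)=1$, and thus $\phi(\fl_\psi)=1$ whenever $\phi=\psi$.

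For the reverse direction, I assume $\phi(\fl_\psi)=1$ and set $p=I-\fl_\psi$, so that $\phi(p)=0$. The Cauchy--Schwarz inequality for the positive functional $\phi$ yields, for every $y\in\fT^{**}$,
\[
|\phi(py)|^2\leq\phi(p^*p)\phi(y^*y)=0,\qquad |\phi(yp)|^2\leq\phi(yy^*)\phi(p^*p)=0,
\]
so $\phi(y)=\phi(\fl_\psi y\fl_\psi)$ for all $y$. The same argument applied to $\psi$ (which satisfies $\psi(\fl_\psi)=1$ by the forward direction) gives $\psi(y)=\psi(\fl_\psi y\fl_\psi)$. Invoking minimality, for each $y$ there is a scalar $c_y\in\bC$ with $\fl_\psi y\fl_\psi=c_y\fl_\psi$; hence
\[
\phi(y)=c_y\phi(\fl_\psi)=c_y=c_y\psi(\fl_\psi)=\psi(y),
\]
and restricting to $\fT$ gives $\phi=\psi$.

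The only real obstacle is the appeal to minimality of $\fl_\psi$ for a pure state, which is a classical piece of $\rC^*$-algebra theory: the normal extension of the GNS representation $\pi_\psi$ sends $\fl_\psi$ to the rank-one projection onto the cyclic vector and restricts to a $*$-isomorphism on its central cover, so $\fl_\psi$ is minimal in $\fT^{**}$. I would simply cite this and proceed as above.
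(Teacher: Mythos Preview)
Your argument is correct. The paper itself gives no proof of this lemma, merely citing \cite[Lemma 2.2]{clouatre2018lochyp}, so there is no in-paper approach to compare against; the route you take---reducing to the minimality of $\fl_\psi$ in $\fT^{**}$ for a pure state and then using Cauchy--Schwarz to compress both states to the corner $\fl_\psi\fT^{**}\fl_\psi$---is the standard one and would be perfectly acceptable here.
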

\begin{proof}
See \cite[Lemma 2.2]{clouatre2018lochyp}.
\end{proof}

It is relevant to note that the left support projection of a pure state is automatically closed.  This is known to experts (see \cite[Proposition 3.13.6]{pedersen1979book}), but we provide additional details for the reader's convenience.

\begin{lemma}\label{L:purestateopen}
Let $\fT$ be a unital $\rC^*$-algebra and let $\psi$ be a pure state on $\fT$. Then, the left support projecton $\fl_\psi$ is a closed projection in $\fT^{**}$.
\end{lemma}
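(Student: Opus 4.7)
The plan is to exhibit a closed left ideal $\fJ\subset\fT$ satisfying $\fJ^{\perp\perp}=\fT^{**}(I-\fl_\psi)$, which by the equivalent definition of closedness recalled earlier in the paper identifies $\fl_\psi$ as a closed projection. The natural candidate is the left kernel $N_\psi=\{x\in\fT:\psi(x^*x)=0\}$, a closed left ideal of $\fT$. Let $(\pi,\H,\xi)$ be the GNS representation of $\psi$ and let $z_\pi\in\fT^{**}$ be the central support projection of $\pi$. Since $\psi$ is pure, $\pi$ is irreducible, so $\widehat\pi$ restricts to a normal $*$-isomorphism $\fT^{**}z_\pi\to B(\H)$. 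A short weak-$*$-continuity argument based on the identity $\psi^{**}(x^*x)=\|\widehat\pi(x)\xi\|^2$ will identify $\fl_\psi$ with the unique lift in $\fT^{**}z_\pi$ of the rank-one projection $P_\xi$ onto $\bC\xi$.

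The inclusion $N_\psi^{\perp\perp}\subset\fT^{**}(I-\fl_\psi)$ will be immediate from $N_\psi\subset\fT^{**}(I-\fl_\psi)$ together with weak-$*$ closedness of the latter. For the reverse inclusion it suffices to check $I-\fl_\psi\in N_\psi^{\perp\perp}$, and I would split $I-\fl_\psi=(I-z_\pi)+(z_\pi-\fl_\psi)$. The first summand lies in $(\ker\pi)^{\perp\perp}=\fT^{**}(I-z_\pi)$, which is contained in $N_\psi^{\perp\perp}$ because $\ker\pi\subset N_\psi$. For the second summand I would invoke irreducibility of $\pi$: Kadison's transitivity theorem gives an isometric isomorphism $\fT/N_\psi\cong\H$ via $a\mapsto\pi(a)\xi$, and combined with Kaplansky density in $B(\H)=\pi(\fT)''$ this produces a bounded net $(b_\lambda)\subset N_\psi$ with $\pi(b_\lambda)\to I-P_\xi$ in the weak operator topology. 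One first takes a positive-contraction Kaplansky approximant $a_\lambda\in\fT$ with $\pi(a_\lambda)\to I-P_\xi$, then subtracts a correction $c_\lambda\in\fT$ chosen (through the isometric quotient) so that $\pi(c_\lambda)\xi=\pi(a_\lambda)\xi$ and $\|c_\lambda\|=\|\pi(a_\lambda)\xi\|\to 0$. Passing to a weak-$*$ cluster point $b$ of $(b_\lambda)$ in $\fT^{**}$ and transporting through $\widehat\pi|_{\fT^{**}z_\pi}$ identifies $bz_\pi$ with $z_\pi-\fl_\psi$; since $N_\psi^{\perp\perp}$ is closed under right multiplication by the central projection $z_\pi$, this places $z_\pi-\fl_\psi$ in $N_\psi^{\perp\perp}$.

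I expect the main obstacle to be this last approximation step: ordinary Kaplansky density only yields approximants inside $\fT$, not inside the left ideal $N_\psi$. The subtraction trick that corrects for this depends essentially on the isometric quotient identification $\fT/N_\psi\cong\H$ supplied by Kadison's transitivity theorem, which is the single place where the purity hypothesis enters through the irreducibility of $\pi$.
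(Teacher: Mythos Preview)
Your argument is correct, but it takes a different route from the paper's. The paper works intrinsically with the left kernel $\fL_\psi=N_\psi$: it picks an increasing positive contractive right approximate identity $(e_i)$ for $\fL_\psi$, shows that any weak-$*$ cluster point $q$ is a projection with $q\le I-\fl_\psi$, and then proves $q=I-\fl_\psi$ by a state argument --- if $\phi(q)=0$ then $\fL_\psi\subset\fL_\phi$, and purity enters via the identity $\ker\psi=\fL_\psi+\fL_\psi^*$ \cite[Proposition 2.9.1]{dixmier1977}, forcing $\phi=\psi$. Thus $(I-e_i)$ decreases to $\fl_\psi$, witnessing closedness directly. Your approach is more spatial: you pass through the GNS picture, identify $\fl_\psi$ concretely as the lift of the rank-one projection $P_\xi$ under the isomorphism $\fT^{**}z_\pi\cong B(\H)$, and then approximate $z_\pi-\fl_\psi$ from within $N_\psi$ using Kaplansky density plus a Kadison-transitivity correction; purity enters instead through the irreducibility of $\pi$ (needed both for $\pi(\fT)''=B(\H)$ and for the isometric identification $\fT/N_\psi\cong\H$). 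The paper's proof is shorter and uses lighter tools, while yours gives a more explicit description of $\fl_\psi$ that could be independently useful; both are valid, and your anticipated obstacle is handled exactly as you outline.
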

\begin{proof}
Put $\fL_\psi=\{t\in \fT:\psi(t^*t)=0\}$. The Schwarz inequality implies that this is a closed left ideal in $\fT$, and as such it admits an increasing right approximate identity $(e_i)$ consisting of positive contractions (see \cite[Lemma 2.1]{blecher2004} for a proof). Upon passing to a subnet if necessary, we may assume that $(e_i)$ converges to some $q\in \fT^{**}$ in the weak-$*$ topology. Clearly $0\leq q\leq I$. Furthermore, because
\[
\fL_\psi=\fT^{**}(I-\fl_\psi)\cap \fT
\]
we may infer that $q(I-\fl_\psi)=q$.  For each $j$, we have $e_j\in \fL_\psi$ so the net $(e_je_i)_i$ converges to $e_j$ in the norm topology of $\fT$ on one hand, and to $e_j q$ in the weak-$*$ topology of $\fT^{**}$ on the other. Therefore, $e_j q=e_j$ for every $j$. Taking weak-$*$ limits yields $q^2=q$. Thus, $q\in \fT^{**}$ is a self-adjoint projection with $q\leq I-\fl_\psi$.

We now claim that $q=I-\fl_\psi$. To see this, we fix a state $\phi$ on $\fT$ such that $\phi(q)=0$, and we aim to show that $\phi(I-\fl_\psi)=0$ as well.  Let $t\in \fL_\psi$. Then, we find
\begin{align*}
\phi(t^*t)=\lim_i\phi(e_i t^* te_i)\leq \|t\|^2\lim_i\phi( e_i^2)\leq \|t\|^2\lim_i\phi( e_i)=\|t\|^2\phi(q)=0.
\end{align*}
We conclude that $\fL_\psi\subset \fL_\phi$. Since $\psi$ is pure, we find
\[
\ker\psi=\fL_\psi+\fL_\psi^*\subset \fL_\phi+\fL_\phi^*\subset \ker \phi
\]
by \cite[Proposition 2.9.1]{dixmier1977}. 
Since both $\psi$ and $\phi$ are states, this forces $\psi=\phi$, so that $\phi(I-\fl_\psi)=0$. We conclude that $I-\fl_\psi\leq q$, so in fact $I-q=\fl_\psi$. By definition of $q$ we see that $I-q=\fl_\psi$ is indeed closed.
\end{proof}

We can now prove the main result of this section.

\begin{theorem}\label{T:peakdiff}
Let $\A\subset B(\H)$ be a  unital operator algebra and let $\omega$ be a state on $\rC^*(\A)$. Consider the following statements.
\begin{enumerate}[{\rm (i)}]
\item The left support projection $\fl_\omega$ is closed and lies in $\A^{\perp\perp}$. 
\item The left support projection $\fl_\omega$ is an $\A$-peak projection.
\item The state $\omega$ is an $\A$-peak state.
\item The GNS representation of $\omega$ is a local $\A$-peak representation.
\end{enumerate}
Then, we have that 
\[
{\rm (i)}\Leftarrow{\rm (ii)}\Rightarrow{\rm (iii)\Rightarrow }{\rm (iv)}.
\]
When $\A$ is separable, we have 
\[
{\rm (i)}\Leftrightarrow{\rm (ii)}\Rightarrow{\rm (iii)\Rightarrow }{\rm (iv)}.
\]
\end{theorem}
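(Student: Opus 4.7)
My plan is to tackle the four implications in sequence, beginning with the most direct, (iii) $\Rightarrow$ (iv). If $a \in \A$ is a contraction witnessing $\omega$ as an $\A$-peak state, then Lemma \ref{L:peakstatepure} gives that $\omega$ is pure with $\omega(a^*a) = \|a\|^2 = 1$, so the GNS representation $\pi_\omega$ is irreducible. I would verify $\pi_\omega$ is a local $\A$-peak representation with the same witness $a$ at $n=1$. Suppose toward a contradiction that some irreducible $\sigma$ not unitarily equivalent to $\pi_\omega$ admits a finite-dimensional subspace $G \subset \H_\sigma$ with $\|P_G \sigma(a)|_G\| = 1$. Then a unit vector $\eta \in G$ satisfies $\|P_G\sigma(a)\eta\|=1$, and since $\sigma(a)$ is a contraction, $\sigma(a)\eta$ lies in $G$ with $\|\sigma(a)\eta\|=1$. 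The vector state $\phi(x) = \langle \sigma(x)\eta,\eta\rangle$ is pure (as $\sigma$ is irreducible, so $\eta$ is necessarily cyclic for it) and satisfies $\phi(a^*a) = 1 = \omega(a^*a)$. The strict peak-state inequality forces $\phi = \omega$, hence the GNS of $\phi$, unitarily equivalent to $\sigma$, coincides with $\pi_\omega$, contradicting our assumption.

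For (ii) $\Rightarrow$ (iii), let $a \in \A$ be a contraction with $a\fl_\omega = \fl_\omega$ witnessing $\fl_\omega$ as an $\A$-peak projection; taking adjoints yields $\fl_\omega a^* = \fl_\omega$. The standard left-support identities $\omega(x) = \omega(\fl_\omega x) = \omega(x\fl_\omega)$, derived from Cauchy--Schwarz using $\omega(I-\fl_\omega) = 0$, then give
\[
\omega(a^*a) = \omega(\fl_\omega a^*a \fl_\omega) = \omega((\fl_\omega a^*)(a\fl_\omega)) = \omega(\fl_\omega) = 1.
\]
Now let $\phi$ be any pure state with $\phi(a^*a) = 1$. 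Viewing $\phi$ as a unital completely positive map into $\bC$, a one-dimensional Hilbert space, the peak projection condition forces $\phi(\fl_\omega) = 1$, and Lemma \ref{L:supppure} applied with the pure state $\omega$ then gives $\phi = \omega$. Contrapositively, $\omega(a^*a) > \phi(a^*a)$ for every pure state $\phi \neq \omega$, establishing the peak-state property with the same witness $a$.

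For (ii) $\Rightarrow$ (i), the same witness $a$ yields closedness and membership of $\fl_\omega$ in $\A^{\perp\perp}$. The relation $a^*a\,\fl_\omega = \fl_\omega$ (derived from the preceding identities by a short norm-attainment argument forcing $a^*a$ to act as the identity on the range of $\fl_\omega$) gives $(a^*a)^n \geq \fl_\omega$ for every $n$, so the decreasing net $(a^*a)^n \in \rC^*(\A)$ converges weak-$*$ to a projection $r \geq \fl_\omega$. A suitable application of the peak condition to finite-dimensional UCP restrictions then forces $r = \fl_\omega$, witnessing closedness. For $\fl_\omega \in \A^{\perp\perp}$, I would construct a net of contractions in $\A$ itself, for instance powers $a^n$ or Cesaro averages thereof, whose weak-$*$ cluster values are pinned down to $\fl_\omega$ via the peak condition. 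The converse (i) $\Rightarrow$ (ii) in the separable case is immediate from Theorem \ref{T:ncGlick}. The main obstacle I anticipate is this non-separable direction (ii) $\Rightarrow$ (i): Theorem \ref{T:ncGlick} is unavailable, so pinning down $\fl_\omega$ as a weak-$*$ limit of elements of $\A$ (rather than of $\rC^*(\A)$) must be carried out by hand, relying only on the finite-dimensional UCP peak condition.
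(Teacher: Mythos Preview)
Your arguments for (iii) $\Rightarrow$ (iv), (ii) $\Rightarrow$ (iii), and the separable direction (i) $\Rightarrow$ (ii) are correct and match the paper's own proof almost exactly. The paper handles (iii) $\Rightarrow$ (iv) by directly bounding $\langle\sigma(a^*a)\eta,\eta\rangle<1$ for every unit vector $\eta\in\H_\sigma$ and then invoking the Schwarz inequality, rather than phrasing it as a contradiction, but the content is identical. For (ii) $\Rightarrow$ (iii) the paper is terser (it omits your verification that $\omega(a^*a)=1$) but uses the same witness $a$ and the same appeal to Lemma~\ref{L:supppure}.

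The substantive difference is (ii) $\Rightarrow$ (i). The paper does not argue this at all: it simply cites \cite[Lemma 3.6 and Proposition 5.6]{hay2007}, which already establish that any peak projection (in Hay's sense, which is implied by the paper's definition) is closed and belongs to $\A^{\perp\perp}$. Your attempt to prove this by hand is more ambitious. The closedness half of your sketch can be made to work: once you have $(a^*a)^n\downarrow r\geq\fl_\omega$, a \emph{normal} state on $\rC^*(\A)^{**}$ supported on $r-\fl_\omega$ restricts to a state $\psi$ on $\rC^*(\A)$ with $\psi(a^*a)=1$ but $\psi(\fl_\omega)=0$, contradicting the peak condition. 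However, the membership $\fl_\omega\in\A^{\perp\perp}$ is where the real difficulty lies, and your proposal ``powers $a^n$ \ldots\ pinned down to $\fl_\omega$ via the peak condition'' is not yet an argument. One would need $a^n\to\fl_\omega$ weak-$*$, and the naive estimate $|\phi(a^n)|^2\leq\phi((a^*)^na^n)$ does not reduce to control of $(a^*a)^n$ when $a$ is non-normal. Hay's proof that $a^n\to q$ weak-$*$ for a peaking element $a$ is a genuine piece of work, and your sketch does not supply a substitute. So while you have correctly identified this as the main obstacle, the proposal as written leaves it unresolved; the paper avoids the issue entirely by citation.
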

\begin{proof}

(ii) $\Rightarrow$ (i): This follows from \cite[Lemma 3.6 and Proposition 5.6]{hay2007}.

(i) $\Rightarrow$ (ii): When $\A$ is separable, this follows from Lemma \ref{L:purestateopen} along with Theorem \ref{T:ncGlick}.

(ii) $\Rightarrow$ (iii): There is a contraction $a\in \A$ with the property that $a\fl_\omega=\fl_\omega$ and if $\phi$ is a state on $\rC^*(\A)$ with $\phi(a^*a)=1$, then $\phi(\fl_\omega)=1$ as well. Lemma \ref{L:supppure} thus shows that $\omega$ is an $\A$-peak state.

(iii) $\Rightarrow$ (iv): Let $\pi$ denote the GNS representation of $\omega$. By assumption,  there is a contraction $a\in \A$ such that
\[
\omega(a^*a)=1>\phi(a^*a)
\]
for every pure state $\phi$ on $\rC^*(\A)$ with $\phi\neq \omega$. Let $\sigma:\rC^*(\A)\to B(\H_\sigma)$ be an irreducible $*$-representation  unitarily inequivalent to $\pi$. Then, for every unit vector $\eta\in \H_\sigma$, the state 
\[
t\mapsto \langle\sigma(t)\eta,\eta \rangle, \quad t\in \rC^*(\A)
\]
is pure and distinct from $\omega$, so that
\[
\langle \sigma(a^*a)\eta,\eta\rangle<1.
\]
Consequently, we see that $\|P_G \sigma(a^*a)|_G\|<1$ for every finite-dimensional subspace $G\subset \H_\sigma$. By virtue of the Schwarz inequality, we find $\|P_G \sigma(a)|_G\|<1$. 
\end{proof}

The implications in the previous theorem cannot be reversed in general, as we illustrate next.

\begin{example}\label{E:peakimplications}
Consider the $\rC^*$-algebra $\bM_2$, and denote by $E_{ij}$ the usual matrix units. Let $\A=\bC I+\bC E_{12}$, which is easily seen to be a unital subalgebra of $\bM_2$ with $\rC^*(\A)=\bM_2$. Let $\{e_1,e_2\}$ denote the standard orthonormal basis of $\bC^2$. Consider the states $\omega$ and $\psi$ on $\bM_2$ defined as
\[
\psi(t)=\langle te_1,e_1\rangle \qand \omega(t)=\langle te_2,e_2\rangle
\]
for every $t\in \bM_2$. 

Let $a=E_{12}\in \A$, which satisfies $a^*a=E_{22}$. Clearly, we have $\omega(a^*a)=1$. Next, let $\phi$ be a pure state on $\bM_2$ such that $\phi(a^*a)=1$. There exists a unit vector $\xi\in \bC^2$ such that
\[
\phi(t)=\langle t\xi,\xi\rangle,\quad t\in \bM_2.
\]
The condition $\phi(a^*a)=1$ implies that $E_{22}\xi=\xi$, so $\xi$ is a unimodular scalar multiple of $e_2$ and $\phi=\omega$. We thus conclude that $\omega$ is an $\A$-peak state.

Given $t\in \bM_2$, we see that $\omega(t^*t)=0$ if and only if 
$te_2=0.$ Therefore, we infer that
\begin{align*}
\{t\in \rC^*(\A):\omega(t^*t)=0\}&=\{t\in\bM_2 :  tE_{22}=0\}=\rC^*(\A)(I-E_{22}).
\end{align*}
This shows that $\mathfrak{l}_\omega = E_{22}$, which does not lie in $\A = \A^{\perp\perp}$. Hence, the implication (iii) $\Rightarrow$ (ii) from Theorem \ref{T:peakdiff} generally fails.

Next, observe that up to unitary equivalence, the identity representation is the only irreducible $*$-representation of $\bM_2$. In particular, it is trivially a local $\A$-peak representation. Nevertheless, $\psi$ is not an $\A$-peak state. Indeed, assume that $a\in \A$ satisfies $\|a\|=1$ and $\psi(a^*a)=1$. Write $a=\begin{bmatrix} \lambda & \mu \\ 0 & \lambda\end{bmatrix}$ for some $\lambda,\mu\in \bC$. We find
\[
|\lambda|=\|ae_1\|=\psi(a^*a)^{1/2}=1 
\]
so the condition $\|a\|=1$ forces $\mu=0$. In other words, $a$ is a unimodular scalar multiple of the identity, so in fact $\phi(a^*a)=1$ for every state $\phi$ on $\bM_2$.  This shows that the implication (iv) $\Rightarrow$ (iii) from Theorem \ref{T:peakdiff} generally fails.

\qed
\end{example}

As another consequence of Theorem \ref{T:peakdiff}, we show that for separable $\rC^*$-algebras, all pure states are peak states. 

\begin{corollary}\label{C:C*peak}
Let $\fT$ be a separable unital $\rC^*$-algebra. Then, any pure state on $\fT$ is a $\fT$-peak state, and any irreducible $*$-representation of $\fT$ is a local $\fT$-peak representation. 
\end{corollary}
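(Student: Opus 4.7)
The plan is to derive this directly from Theorem \ref{T:peakdiff} applied in the degenerate case $\A = \fT$. The point is that when the operator algebra is the ambient $\rC^*$-algebra itself, the membership condition $\fl_\omega \in \A^{\perp\perp}$ is automatic, because $\A^{\perp\perp} = \fT^{**}$. Thus condition (i) of Theorem \ref{T:peakdiff} collapses to the single requirement that $\fl_\omega$ be a closed projection.

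To handle the pure state part, I would fix a pure state $\psi$ on $\fT$ and invoke Lemma \ref{L:purestateopen} to conclude that $\fl_\psi$ is automatically closed. Since $\fT$ is separable, the equivalence ${\rm (i)} \Leftrightarrow {\rm (ii)}$ from Theorem \ref{T:peakdiff} applies, so $\fl_\psi$ is a $\fT$-peak projection, and the chain ${\rm (ii)} \Rightarrow {\rm (iii)}$ gives that $\psi$ is a $\fT$-peak state. This is the first claim.

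For the second claim, let $\pi : \fT \to B(\H_\pi)$ be an irreducible $*$-representation. The standard fact I would use is that $\pi$ is unitarily equivalent to the GNS representation of any vector state $\psi(t) = \langle \pi(t)\xi,\xi\rangle$ associated to a unit vector $\xi \in \H_\pi$, and that such a $\psi$ is pure (by irreducibility of $\pi$). Applying the first part of the corollary to $\psi$, together with ${\rm (iii)} \Rightarrow {\rm (iv)}$ from Theorem \ref{T:peakdiff}, shows that $\pi$ is a local $\fT$-peak representation.

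There is no real obstacle here; the only substantive input is the closedness of $\fl_\psi$ for pure $\psi$ (Lemma \ref{L:purestateopen}) and the observation that separability allows us to reverse the implication ${\rm (ii)} \Rightarrow {\rm (i)}$ from Theorem \ref{T:peakdiff}. The whole proof should be just a few lines citing these earlier results.
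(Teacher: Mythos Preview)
Your proposal is correct and follows essentially the same route as the paper's proof: invoke Lemma \ref{L:purestateopen} for closedness of $\fl_\psi$, note that $\fl_\psi \in \fT^{\perp\perp}=\fT^{**}$ trivially, and then let the implication chain of Theorem \ref{T:peakdiff} (using separability for ${\rm (i)}\Rightarrow{\rm (ii)}$) deliver both conclusions. The only difference is that you spell out the GNS step for the second claim explicitly, whereas the paper absorbs it into the statement of (iv).
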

\begin{proof}
The left support projection of any pure state on $\fT$ is closed by Lemma \ref{L:purestateopen}, and plainly lies in $\fT^{\perp\perp}=\fT^{**}$. Theorem \ref{T:peakdiff} then implies the desired statements. 
\end{proof}

Let $\A$ be separable, let $\omega$ be a pure state of $\rC^*(\A)$ and let $\pi$ denote its GNS representation, which is necessarily irreducible. Assume that $\pi$ is finite-dimensional. By virtue of Theorems \ref{T:locpeak} and \ref{T:peakdiff}, we see that $\pi$ is a local $\A$-peak representation when either $\fs_\pi$ or $\fl_\omega$ lies in $\A^{\perp\perp}$. In the remainder of this section, we clarify the relationship between these two conditions.  First, we show that it is possible for all irreducible $*$-representations of $\rC^*(\A)$ to have support projection lying in $\A^{\perp\perp}$, while some pure states have their left support projection outside of $\A^{\perp\perp}$.

\begin{example}\label{E:ell2}
Consider the unital operator algebra $\A\subset \bM_2$ from Example \ref{E:peakimplications}. All irreducible $*$-representations of $\rC^*(\A)$ are unitarily equivalent to the identity representation, whose support projection is simply the unit and so lies in $\A^{\perp\perp}$. On the other hand, the pure state $\psi$ defined in Example \ref{E:peakimplications} was shown not to be an $\A$-peak state, so by Theorem \ref{T:peakdiff} and Lemma \ref{L:purestateopen} it follows that $\fl_\psi\notin \A^{\perp\perp}$.
\qed
\end{example}

In the reverse direction, there are  pure states with left support projection in $\A^{\perp\perp}$ whose GNS representation is a local $\A$-peak representation with support projection outside of $\A^{\perp\perp}$. 

\begin{example}\label{E:ell1}
Consider the unital operator algebra $\A\subset \bM_2\oplus \bC$ consisting of all elements of the form
\[
\begin{bmatrix} \lambda & x \\ 0 & \mu\end{bmatrix}\oplus \lambda
\]
where $x,\lambda,\mu\in \bC$. It easily follows that $\rC^*(\A)=\bM_2\oplus \bC$. Let $\pi:\rC^*(\A)\to\bM_2$ and $\sigma:\rC^*(\A)\to \bC$ be the irreducible $*$-representations defined by
\[
\pi(a\oplus \lambda)=a \qand \sigma(a\oplus \lambda)=\lambda 
\]
for every $a\in \bM_2, \lambda\in \bC$.  We find $\fs_\pi=I_2\oplus 0$, which does not belong to $\A=\A^{\perp\perp}$. 

Up to unitary equivalence, the only irreducible $*$-representations of $\rC^*(\A)$ are $\pi$ and $\sigma$. It is readily seen that $\|\sigma^{(n)}(B)\|\leq \|\pi^{(n)}(B)\|$ for every $B\in \bM_n(\A)$ and every $n\geq 1$, so that $\sigma$ is not a local $\A$-peak representation. On the other hand, if we put $c=\begin{bmatrix} 0 & 1 \\ 0 & 0 \end{bmatrix}\oplus 0\in \A$, then we have
\[
\|\sigma(c)\|=0<\|\pi(c)\|=1
\]
so that $\pi$ is a local $\A$-peak representation. 

Next, let $\{e_1,e_2\}$ denote the standard orthonormal basis of $\bC^2$ and let $\omega$ be the pure state on $\rC^*(\A)$ defined by
\[
\omega(t)=\langle \pi(t)e_2,e_2 \rangle, \quad t\in \rC^*(\A).
\]
Arguing as in Example \ref{E:peakimplications}, it is readily seen that $\ell_\omega=E_{22}\oplus 0\in \A$.
\qed
\end{example}
Another consequence of the previous example is that, even in the finite-dimensional setting, there are pure states whose left support projection is an $\A$-peak projection, yet the support projection of their GNS representation is not an $\A$-peak projection. In other words, the implication property (iv) in Theorem \ref{T:peakdiff} cannot be strengthened in this fashion.

In contrast with the previous example, we will show that if \emph{all} pure states on $\rC^*(\A)$ have their left support projection in $\A^{\perp\perp}$, then we can guarantee that $\fs_\pi\in \A^{\perp\perp}$ for all irreducible finite-dimensional $*$-representations $\pi$ as well. The following standard fact is required. Given a collection $\F$ of projections in a von Neumann algebra, we denote by $\bigvee \{p:p\in \F\}$ their supremum.

\begin{lemma}\label{L:suppstates}
Let $\fT$ be a unital $\rC^*$-algebra and let $\pi:\fT\to B(\H_\pi)$ be an irreducible finite-dimensional  $*$-representation of $\fT$. Then, there is a finite collection $\F$ of pure states on $\fT$ such that $\fs_\pi=\bigvee \{\fl_\omega:\omega\in \F\}$.
\end{lemma}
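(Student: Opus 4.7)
The plan is to exhibit an explicit finite family of pure states built from an orthonormal basis of $\H_\pi$, and to identify their left support projections as rank-one projections after transporting them through the isomorphism $\fT^{**}\fs_\pi \cong B(\H_\pi)$ induced by $\hat\pi$.

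First I would observe that, since $\pi$ is irreducible and $\H_\pi$ is finite-dimensional, Burnside's theorem gives $\pi(\fT) = B(\H_\pi)$. By the definition of $\fs_\pi$, the weak-$*$ continuous extension $\hat\pi:\fT^{**}\to B(\H_\pi)$ has kernel $\fT^{**}(I-\fs_\pi)$, and $\fs_\pi$ is central, so $\hat\pi$ restricts to a normal unital $*$-isomorphism
\[
\Phi:\fT^{**}\fs_\pi \longrightarrow B(\H_\pi).
\]
In particular $\Phi$ preserves suprema of uniformly bounded families of projections.

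Next, choose an orthonormal basis $\xi_1,\ldots,\xi_n$ of $\H_\pi$ and set
\[
\omega_i(t)=\langle \pi(t)\xi_i,\xi_i\rangle \qfor t\in \fT,\; i=1,\ldots,n.
\]
Each $\omega_i$ is a pure state because $\pi$ is irreducible. I would then compute, using that $\hat\omega_i(x)=\langle \hat\pi(x)\xi_i,\xi_i\rangle$, that $\omega_i(x^*x)=0$ if and only if $\hat\pi(x)\xi_i=0$. This forces $\fl_{\omega_i}\leq \fs_\pi$, and shows that
\[
\fT^{**}(I-\fl_{\omega_i})\cap \fT^{**}\fs_\pi
= \{x\fs_\pi : \hat\pi(x\fs_\pi)\xi_i=0\}
= \Phi^{-1}\bigl(B(\H_\pi)(I-P_{\xi_i})\bigr),
\]
where $P_{\xi_i}$ denotes the rank-one projection onto $\bC\xi_i$. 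Hence $\Phi(\fl_{\omega_i})=P_{\xi_i}$.

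Finally, I would take the supremum: since $\xi_1,\ldots,\xi_n$ span $\H_\pi$, we have $\bigvee_{i=1}^n P_{\xi_i}=I$ in $B(\H_\pi)$, and all the $\fl_{\omega_i}$ lie beneath $\fs_\pi$, so applying $\Phi^{-1}$ yields
\[
\bigvee_{i=1}^n \fl_{\omega_i}=\Phi^{-1}(I)=\fs_\pi.
\]
Setting $\F=\{\omega_1,\ldots,\omega_n\}$ finishes the proof. The only delicate step is the identification $\Phi(\fl_{\omega_i})=P_{\xi_i}$, which requires carefully translating between left ideals in $\fT^{**}$ and left ideals in $B(\H_\pi)$ via the central cut-down by $\fs_\pi$; everything else is a direct consequence of the definitions together with Burnside's theorem.
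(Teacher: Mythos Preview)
Your argument is correct and uses the same finite family of pure states as the paper, namely the vector states $\omega_i(\cdot)=\langle\pi(\cdot)\xi_i,\xi_i\rangle$ coming from an orthonormal basis of $\H_\pi$. The difference lies only in the verification that $\bigvee_i \fl_{\omega_i}=\fs_\pi$. You pass through the normal $*$-isomorphism $\Phi:\fT^{**}\fs_\pi\to B(\H_\pi)$ (using Burnside's theorem for surjectivity) and identify each $\fl_{\omega_i}$ with the rank-one projection $P_{\xi_i}$, so the supremum is transparent. The paper instead argues the two inequalities directly from the definitions: $\omega_j(\fs_\pi)=1$ gives $\fl_{\omega_j}\leq\fs_\pi$, and $\omega_j(p)\geq\omega_j(\fl_{\omega_j})=1$ for $p=\bigvee_j\fl_{\omega_j}$ forces $\hat\pi(p)=I$, hence $\fs_\pi\leq p$. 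The paper's route is lighter (no need for Burnside or the explicit isomorphism), while yours yields the extra information that $\Phi(\fl_{\omega_i})$ is exactly the rank-one projection onto $\bC\xi_i$; either way the result follows.
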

\begin{proof}
Let $\{e_1,\ldots,e_n\}$ be an orthonormal basis for $\H_\pi$. Since $\pi$ is irreducible, the state $\omega_j$ on $\fT$ defined as
\[
\omega_j(t)=\langle \pi(t) e_{j},e_{j}\rangle, \quad t\in \fT
\]
is pure. Put $p=\bigvee_{j=1}^n \fl_{\omega_j}\in \fT^{**}$. Because $\H$ is finite-dimensional, we see that $\pi^{**}:\fT^{**}\to B(\H)$ is the unique weak-$*$ continuous extension of $\pi$ to $\fT^{**}$.
Then, 
\[
\langle \pi^{**}(p)e_j,e_j\rangle=\omega_j(p)\geq \omega_j(\fl_{\omega_j})=1
\]
for every $1\leq j\leq n$, whence $\pi^{**}(p)=I$. Thus, $\fs_\pi\leq p$. Conversely, for $1\leq j\leq n$ we see that
\[
\omega_j(\fs_\pi)=\langle \pi^{**}(\fs_\pi)e_j,e_j\rangle=1
\]
so that $\fl_{\omega_j}\leq \fs_\pi$. Therefore, $p\leq \fs_\pi$.
\end{proof}

We can now give the desired result.

\begin{theorem}\label{T:suppA}
Let $\A\subset B(\H)$ be a separable unital operator algebra. Assume that the left support projection of every pure state on $\rC^*(\A)$ is an $\A$-peak projection. Then, the support projection of every finite-dimensional irreducible $*$-representation $\pi$ of $\rC^*(\A)$ is an $\A$-peak projection.
\end{theorem}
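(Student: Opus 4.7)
The plan is to combine Lemma \ref{L:suppstates} with the dual operator algebra structure of $\A^{\perp\perp}$ and Theorem \ref{T:ncGlick}.

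First, I would apply Lemma \ref{L:suppstates} to find finitely many pure states $\omega_1,\ldots,\omega_n$ on $\rC^*(\A)$ whose left support projections satisfy $\fs_\pi = \bigvee_{j=1}^n \fl_{\omega_j}$. By hypothesis each $\fl_{\omega_j}$ is an $\A$-peak projection, so the implication (ii) $\Rightarrow$ (i) of Theorem \ref{T:peakdiff} yields $\fl_{\omega_j}\in \A^{\perp\perp}$ for every $j$.

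Next, I would exploit the fact that $\A^{\perp\perp}$ is a unital, weak-$*$ closed, norm-closed subalgebra of $\rC^*(\A)^{**}$. Setting $t = \sum_{j=1}^n \fl_{\omega_j}$, a positive element of $\A^{\perp\perp}$, one verifies directly that the range projection $R(t)$ (computed in the von Neumann algebra $\rC^*(\A)^{**}$) coincides with $\bigvee_j \fl_{\omega_j} = \fs_\pi$; indeed, $\ker(t) = \bigcap_j \ker(\fl_{\omega_j})$ since each $\fl_{\omega_j}$ is a positive contraction. Since $x^{1/k}$ can be uniformly approximated by polynomials on $[0,\|t\|]$ by the Weierstrass theorem, each $t^{1/k}$ lies in the unital norm-closed algebra $\A^{\perp\perp}$. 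The spectral theorem then shows that $t^{1/k}$ converges to $R(t)$ in the strong operator topology (of any faithful normal representation of $\rC^*(\A)^{**}$), hence in the weak-$*$ topology of $\rC^*(\A)^{**}$. Weak-$*$ closedness of $\A^{\perp\perp}$ would thus give $\fs_\pi = R(t)\in \A^{\perp\perp}$.

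Finally, $\fs_\pi$ is closed by its very definition, since its complement $I - \fs_\pi$ is the unit of the weak-$*$ closed ideal $(\ker \pi)^{\perp\perp}$ and is therefore an open projection. Combining closedness of $\fs_\pi$, the containment $\fs_\pi\in \A^{\perp\perp}$, and the separability assumption on $\A$, Theorem \ref{T:ncGlick} will conclude that $\fs_\pi$ is an $\A$-peak projection.

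The main obstacle I foresee is the middle step: passing from \emph{each} $\fl_{\omega_j}\in \A^{\perp\perp}$ to $\bigvee_j \fl_{\omega_j}\in \A^{\perp\perp}$. Since $\A^{\perp\perp}$ is not a von Neumann algebra, it is not a priori stable under lattice operations on projections, so some genuine input is required; reducing suprema to range projections of a single positive element via continuous functional calculus and then invoking weak-$*$ closedness is the key ingredient that makes the argument go through.
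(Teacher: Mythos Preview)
Your proposal is correct and follows the same outline as the paper: apply Lemma \ref{L:suppstates}, deduce $\fs_\pi\in\A^{\perp\perp}$, and finish with Theorem \ref{T:ncGlick}. In fact you supply more than the paper does, since the paper simply asserts ``By Lemma \ref{L:suppstates}, we conclude that $\fs_\pi\in\A^{\perp\perp}$'' without justifying why a finite supremum of projections in $\A^{\perp\perp}$ remains there; your range-projection argument via $t^{1/k}\to R(t)$ and weak-$*$ closedness of $\A^{\perp\perp}$ is exactly the kind of detail needed to make that step rigorous.
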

\begin{proof}
By Theorem \ref{T:peakdiff}, we see that the left support projection of every pure state on $\rC^*(\A)$ lies in $\A^{\perp\perp}$. 
Let $\pi$ be a finite-dimensional irreducible $*$-representation of $\rC^*(\A)$.  By Lemma \ref{L:suppstates}, we conclude that $\fs_\pi\in\A^{\perp\perp}$, so that $\fs_\pi$ is an $\A$-peak projection by Theorem \ref{T:ncGlick}.
\end{proof}

We note here that the previous result does not follow directly from Theorem \ref{T:peakdiff} (see the discussion following Example \ref{E:ell1}). Moreover, the converse does not hold, as demonstrated by Example \ref{E:ell2}.

\section{Peak points in the Choquet boundary}\label{S:peakChoquet}

In the classical case of uniform algebras, peak points necessarily lie in the Choquet boundary. In this section, we investigate non-commutative analogues of this fact for the different types of peak points studied in Section \ref{S:ncpeak}.  Let us next give some details about what this means.

Let $\A\subset B(\H)$ be a unital operator algebra. A unital $*$-representation $\beta:\rC^*(\A)\to B(\H_\beta)$ is said to have the \emph{unique extension property with respect to $\A$} if $\beta$ is the unique completely contractive extension to $\rC^*(\A)$ of $\beta|_\A$. If in addition $\beta$ is irreducible, then we say that $\beta$ is a \emph{boundary representation for $\A$}. 
 
Although our focus in this paper is on operator algebras, in what follows we will also deal with operator systems: unital self-adjoint subspaces of unital $\rC^*$-algebras. The notion of boundary representations also makes sense in this context. Indeed, let $\S\subset B(\H)$ be an operator system. An irreducible $*$-representation $\beta:\rC^*(\S)\to B(\H_\beta)$ is said to be a boundary representation for $\S$ if $\beta$ is the unique completely positive extension to $\rC^*(\S)$ of $\beta|_\S$. 

These two definitions of boundary representations are in fact very much compatible. Indeed, a unital completely contractive linear map $\phi:\A\to B(\H_\phi)$ admits a \emph{unique} unital completely positive extension  $\tilde\phi:\A+\A^*\to B(\H_\phi)$ \cite[Proposition 1.2.8]{arveson1969}. This implies that an irreducible $*$-representation $\beta:\rC^*(\A)\to B(\H_\beta)$ is a boundary representation for $\A$ if and only if it is a boundary representation for the operator system $\A+\A^*$.

As discussed in Section \ref{S:intro}, the usual non-commutative analogue of the Choquet boundary consists of the collection of all boundary representations for $\A$. We are thus led to ask the following questions.

\begin{enumerate}[{\rm (a)}]
\item Let $\pi$ be a local $\A$-peak representation of $\rC^*(\A)$. Must $\pi$ be a boundary representation for $\A$?
\item Let $\omega$ be an $\A$-peak state on $\rC^*(\A)$. Must $\omega|_\A$ admit a unique state extension to $\rC^*(\A)$?
\item Let $q\in \rC^*(\A)^{**}$ be a central $\A$-peak projection. Must $q$ coincide with the support projection of some unital $*$-representation of $\rC^*(\A)$ with the unique extension property with respect to $\A$?
\end{enumerate}

Question (a) was already resolved in previous work; we record the statement here for future reference.

\begin{theorem}\label{T:locpeakbdry}
Let $\A\subset B(\H)$ be a unital  operator algebra and let $\pi$ be an irreducible $*$-representation of $\rC^*(\A)$. If $\pi$ is a local $\A$-peak representation, then it is a boundary representation for $\A$. 
\end{theorem}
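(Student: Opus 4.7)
The plan is to prove the contrapositive: assume $\pi$ is not a boundary representation for $\A$, and show that $\pi$ cannot be a local $\A$-peak representation. The failure of the unique extension property gives, via Stinespring's theorem, a $*$-representation $\sigma:\rC^*(\A)\to B(\K)$ and an isometry $V:\H_\pi\to \K$ such that $V^*\sigma(a)V=\pi(a)$ for every $a\in \A$, with $V\H_\pi$ not reducing for $\sigma$. To make this useful, I would refine $\sigma$ so that it contains no sub-representation unitarily equivalent to $\pi$; this can be arranged by first invoking the Dritschel--McCullough maximal dilation theorem to produce a strict maximal extension and then removing any $\pi$-isotypic summand.

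Next, given any candidate peak element $B\in \bM_n(\A)$ with $\|B\|=1$ and a finite-dim subspace $F\subset \H_\pi^{(n)}$ with $\|P_F\pi^{(n)}(B)|_F\|=1$, I would transfer norm attainment through $V^{(n)}$ using the intertwining identity $\pi^{(n)}(B)=V^{(n)*}\sigma^{(n)}(B)V^{(n)}$. Choosing a unit vector $x\in F$ realizing the norm and setting $y=V^{(n)}x\in V^{(n)}(F)=:F'$, a short computation (exploiting that $V^{(n)*}$ is a contraction and that $\|\sigma^{(n)}(B)\|\leq 1$) shows that $\sigma^{(n)}(B)y$ is a unit vector lying in $F'$; in particular, $\|P_{F'}\sigma^{(n)}(B)|_{F'}\|=1$.

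To extract a contradiction with the local $\A$-peak property, I would consider the state $\omega_0$ on $\bM_n(\rC^*(\A))$ defined by $\omega_0(X)=\langle \sigma^{(n)}(X)y,y\rangle$, which satisfies $\omega_0(B^*B)=1$. The face of states on $\bM_n(\rC^*(\A))$ taking value $1$ at $B^*B$ is non-empty, weak-$*$ compact and convex, hence has an extreme point $\psi$ by Krein--Milman. Such a $\psi$ is a pure state, and using the standard identification of pure states on $\bM_n(\rC^*(\A))$ with pairs $(\pi_0,\xi)$ where $\pi_0$ is an irreducible $*$-representation of $\rC^*(\A)$ and $\xi\in \H_{\pi_0}^{(n)}$ is a unit vector, I would obtain a finite-dim subspace $G=\spn\{\xi,\pi_0^{(n)}(B)\xi\}\subset \H_{\pi_0}^{(n)}$ with $\|P_G\pi_0^{(n)}(B)|_G\|=1$.

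The main obstacle is guaranteeing that $\pi_0\not\simeq \pi$; without this, the construction yields no contradiction. This is precisely where the initial refinement of $\sigma$ pays off: since $\sigma$ was arranged to avoid $\pi$ as a sub-representation, the pure states in the support of $\sigma$ cannot come from $\pi$, so the GNS data $(\pi_0,\xi)$ extracted from $\omega_0$ must satisfy $\pi_0\not\simeq \pi$. Making this last step precise in full generality (without separability of $\rC^*(\A)$) is the subtle point; one viable route is to argue via the $\rC^*$-algebraic support of $\omega_0$ in $\sigma$, or to work at the level of ideals and reduce to showing that $\ker\sigma\not\subset \ker\pi$, so any irreducible $*$-representation arising from the face must have kernel containing $\ker\sigma$ and therefore cannot be $\pi$.
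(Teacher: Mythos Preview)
The paper does not prove this theorem directly; it simply cites \cite[Theorem 3.1]{CTh2020fdim}. Hence there is no in-paper proof to compare against. Nevertheless, your proposal contains a genuine gap at precisely the point you flag as ``the main obstacle,'' and neither of your suggested repairs works.

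\textbf{The removal of the $\pi$-isotypic summand is not possible in general.} Your dilation $\sigma$ acts on $\K$, and $V\H_\pi\subset\K$ is merely semi-invariant, not reducing. When you split $\K=\K_\pi\oplus\K_\pi^\perp$ into the $\pi$-isotypic part and its complement, the range $V\H_\pi$ need not lie in $\K_\pi^\perp$; it may be entirely contained in $\K_\pi$. A concrete obstruction: if $\rC^*(\A)=\bM_m$, then every $*$-representation is a multiple of $\pi\simeq\id$, so $\K_\pi^\perp=\{0\}$ and there is nothing left after removal. Thus Step~3 cannot be carried out, and you are not entitled to assume that $\sigma$ has no $\pi$-subrepresentation.

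\textbf{The extreme point of the face is not tied to $\sigma$.} In Steps~5--6 you pass from $\omega_0=\langle\sigma^{(n)}(\cdot)y,y\rangle$ to an extreme point $\psi$ of the face $\{\phi:\phi(B^*B)=1\}$. But this face is defined only by $B^*B$ and contains, for instance, the pure state $\langle\pi^{(n)}(\cdot)x,x\rangle$ (recall $\|\pi^{(n)}(B)x\|=1$). A Krein--Milman extreme point of this face has no reason to factor through $\sigma$ or to avoid $\pi$; it could perfectly well be a $\pi$-vector state. So the inference ``GNS data $(\pi_0,\xi)$ extracted from $\omega_0$ must satisfy $\pi_0\not\simeq\pi$'' is unfounded: $\psi$ is not extracted from $\omega_0$ at all.

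\textbf{The kernel route also fails.} Even granting a strict dilation, one cannot conclude $\ker\sigma\not\subset\ker\pi$: take $\sigma$ faithful and $\pi$ with nonzero kernel. More seriously, the face $\{\phi:\phi(B^*B)=1\}$ is indifferent to $\ker\sigma$; its extreme points need not annihilate $\ker\sigma$, so the implication ``$\pi_0$ must have kernel containing $\ker\sigma$'' has no basis.

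What \emph{would} move your argument forward is to disintegrate $\omega_0$ itself (rather than pick an unrelated extreme point) and to show that the pure states appearing in that decomposition have GNS representations dominated by $\sigma$. Combined with a correct argument that $\sigma$ has a non-$\pi$ irreducible component meeting the cyclic subspace generated by $y$, this could close the gap---but that is a substantially different and harder argument than what you have sketched, and it is exactly where the real work in \cite{CTh2020fdim} lies.
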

\begin{proof}
This is \cite[Theorem 3.1]{CTh2020fdim}.
\end{proof}

We now move on to states. In this case again, the question is easily answered, albeit in the negative this time.

\begin{example}\label{E:peakstateuep}
Consider the algebra $\A\subset \bM_2$ along with the two states $\omega$ and $\psi$ from Example \ref{E:peakimplications}. Recall that $\omega$ was shown therein to be an $\A$-peak state. Observe now that the states $\omega$ and $\psi$ clearly agree on $\A$. This shows that the property of $\omega$ being an $\A$-peak state does not imply that it has a unique extension to a state on $\bM_2$.
\qed
\end{example}

By changing Question (b) slightly, a positive result can be established.

\begin{proposition}\label{P:statepeakbdry}
Let $\A\subset B(\H)$ be a unital  operator algebra and let $\omega$ be a pure state on $\rC^*(\A)$.  If $\fl_\omega\in \A^{\perp\perp}$, then $\omega|_\A$ has a unique state extension to $\rC^*(\A)$. In particular, this holds if $\fl_\omega$ is an $\A$-peak projection.
\end{proposition}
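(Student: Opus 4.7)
The plan is to use the canonical isometric embedding of states into normal functionals on the bidual, combined with Lemma \ref{L:supppure}, to pin down any competing extension.

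First I would fix an arbitrary state $\phi$ on $\rC^*(\A)$ with $\phi|_\A = \omega|_\A$; the goal is to show $\phi = \omega$. I would extend $\omega$ and $\phi$ to normal states on $\rC^*(\A)^{**}$ in the usual way (acting on the predual $\rC^*(\A)^*$); both extensions are then continuous with respect to the weak-$*$ topology $\sigma(\rC^*(\A)^{**}, \rC^*(\A)^*)$ on the entirety of $\rC^*(\A)^{**}$.

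Next, since $\fl_\omega \in \A^{\perp\perp}$ and the bipolar theorem identifies $\A^{\perp\perp}$ with the weak-$*$ closure of $\A$ in $\rC^*(\A)^{**}$, I can pick a net $(a_i)$ in $\A$ with $a_i \to \fl_\omega$ weak-$*$. Applying weak-$*$ continuity to the two extensions gives $\omega(a_i)\to \omega(\fl_\omega)$ and $\phi(a_i)\to\phi(\fl_\omega)$. Because $\omega$ and $\phi$ agree on $\A$, we have $\omega(a_i) = \phi(a_i)$ for every $i$, so $\phi(\fl_\omega) = \omega(\fl_\omega)$. From the defining property of $\fl_\omega$ (i.e., $\omega((I-\fl_\omega)^*(I-\fl_\omega)) = 0$), we get $\omega(\fl_\omega) = 1$, whence $\phi(\fl_\omega) = 1$. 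Lemma \ref{L:supppure} now forces $\phi = \omega$, establishing uniqueness.

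For the final sentence, I would simply invoke the implication (ii) $\Rightarrow$ (i) of Theorem \ref{T:peakdiff}: if $\fl_\omega$ is an $\A$-peak projection, then $\fl_\omega$ is closed and lies in $\A^{\perp\perp}$, so the hypothesis of the first statement is automatically met. The main delicate point is simply making sure that the approximating net from $\A^{\perp\perp} = \wsclos{\A}$ can be paired against normal extensions of $\omega$ and $\phi$ without having to worry about boundedness, which is handled by observing that normality makes the extensions weak-$*$ continuous on all of $\rC^*(\A)^{**}$, not merely on bounded sets.
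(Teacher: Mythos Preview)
Your proposal is correct and follows essentially the same route as the paper: both argue that a competing state $\phi$ agreeing with $\omega$ on $\A$ must agree with it on $\A^{\perp\perp}$, hence $\phi(\fl_\omega)=\omega(\fl_\omega)=1$, and then invoke Lemma~\ref{L:supppure}. The paper compresses the passage from $\A$ to $\A^{\perp\perp}$ into a single sentence, whereas you spell out the normal-extension and weak-$*$ continuity details; similarly, both appeal to Theorem~\ref{T:peakdiff} for the final sentence.
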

\begin{proof}
Let $\psi$ be a state on $\rC^*(\A)$ such that $\psi|_\A=\omega|_\A$. It follows that $\psi(\xi)=\omega(\xi)$ for every $\xi\in \A^{\perp\perp}$, so by assumption we find $\psi(\fl_\omega)=\omega(\fl_\omega)=1$. Applying Lemma \ref{L:supppure}, we find $\psi=\omega$. The second statement follows from Theorem \ref{T:peakdiff}.
\end{proof}

In the rest of this section, we tackle Question (c), which turns out to be quite a bit more elusive than the previous two. We start with a fact that will be used repeatedly.
 
 \begin{lemma}\label{L:peaktrick}
 Let $\fT$ be a unital $\rC^*$-algebra and let $\A\subset \fT$ be a unital subalgebra. Let $\pi$ be a unital $*$-representation of $\fT$ and let $a\in \A$ be a contraction peaking at $\fs_\pi$.  Let $\sigma:\fT\to B(\H_\sigma)$ be an irreducible $*$-representation with the property that there is a finite-dimensional subspace $F\subset \H_\sigma$ such that
 \[
 \|P_F\sigma(a^*a)|_F\|\geq \|\pi(a^*a)\|.
 \]
  Then,  we have $\ker \pi\subset \ker \sigma$.
 \end{lemma}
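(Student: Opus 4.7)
The plan is to reformulate the containment $\ker\pi\subset\ker\sigma$ in terms of the central projection $\fs_\pi$, then exploit the peaking property of $a$ via a single UCP compression and invoke irreducibility of $\sigma$. Using the normal (weak-$*$ continuous) $*$-homomorphism extensions $\widehat\pi:\fT^{**}\to B(\H_\pi)$ and $\widehat\sigma:\fT^{**}\to B(\H_\sigma)$, the defining property $(\ker\pi)^{\perp\perp}=\fT^{**}(I-\fs_\pi)$ combined with weak-$*$ continuity of $\widehat\sigma$ shows that $\ker\pi\subset\ker\sigma$ is equivalent to $\widehat\sigma(\fs_\pi)=I_{\H_\sigma}$.

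First I would extract the identity $\|\pi(a^*a)\|=1$. Since $\widehat\pi$ is unital and annihilates $I-\fs_\pi$, we have $\widehat\pi(\fs_\pi)=I_{\H_\pi}$; applying $\widehat\pi$ to the assumed identity $a\fs_\pi=\fs_\pi$ then yields $\pi(a)=I_{\H_\pi}$, whence $\pi(a^*a)=I_{\H_\pi}$. Combined with the hypothesis and the fact that $a$ is a contraction, this forces $\|P_F\sigma(a^*a)|_F\|=1$.

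The key step is to apply the peaking property to the UCP map $\sigma_F:\fT\to B(F)$ defined by $\sigma_F(t)=P_F\sigma(t)|_F$. Since $F$ is finite-dimensional and $\|\sigma_F(a^*a)\|=1$, the peaking of $a$ at $\fs_\pi$ forces the normal extension of $\sigma_F$ to satisfy $\|P_F\widehat\sigma(\fs_\pi)|_F\|=1$ (using that $B(F)$ equals its own bidual, so the normal extension of $\sigma_F$ coincides with $P_F\widehat\sigma(\cdot)|_F$ on $\fT^{**}$). Set $Q=\widehat\sigma(\fs_\pi)\in B(\H_\sigma)$; this is a projection, and since $\fs_\pi$ is central in $\fT^{**}$, $Q$ commutes with $\widehat\sigma(\fT)=\sigma(\fT)$. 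By irreducibility of $\sigma$, the commutant of $\sigma(\fT)$ is $\bC\cdot I_{\H_\sigma}$, so $Q\in\{0,I_{\H_\sigma}\}$; the preceding inequality rules out $Q=0$, giving $Q=I_{\H_\sigma}$, as required.

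The main technical point I expect to need care is the bookkeeping around normal extensions---specifically, checking that the peaking property is applicable to the compressed UCP map $\sigma_F$ (which targets a finite-dimensional space, as required) and that its normal extension is explicitly the compression of $\widehat\sigma$. Once that identification is in place, the remainder is routine manipulation with central projections and the commutant of an irreducible $*$-representation.
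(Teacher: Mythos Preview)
Your proposal is correct and follows essentially the same route as the paper: show $\pi(a)=I$ via $a\fs_\pi=\fs_\pi$, deduce $\|P_F\sigma(a^*a)|_F\|=1$, apply the peaking property to the finite-dimensional compression to get $\|P_F\widehat\sigma(\fs_\pi)|_F\|=1$, and then use centrality of $\fs_\pi$ plus irreducibility of $\sigma$ to conclude $\widehat\sigma(\fs_\pi)=I$. The only difference is that you are more explicit than the paper about identifying the normal extension of $\sigma_F$ with $P_F\widehat\sigma(\cdot)|_F$, which the paper leaves implicit.
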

 \begin{proof}
Since $a\fs_\pi=\fs_\pi$ and $\pi^{**}(\fs_\pi)=I$, we see that $\pi(a)=I$. By assumption, we find
  \[
 \|P_F\sigma(a^*a)|_F\|\geq \|\pi(a)\|^2=1
 \]
and so by choice of $a$ and the definition of an $\A$-peak projection, we must have
 \[
\| P_F \widehat\sigma(\fs_\pi)|_F\|=1.
\]
In particular, $\widehat\sigma(\fs_\pi)$ is a non-zero projection commuting with $\sigma(\fT)$. Since $\sigma$ is irreducible, this forces $\widehat\sigma(I-\fs_\pi)=0$, or 
$\ker \pi^{**}\subset \ker \widehat \sigma$. In particular, we infer that $\ker \pi\subset \ker \sigma$.  
 \end{proof}

We also need the following standard fact.

\begin{lemma}\label{L:kerA}
Let $\A\subset B(\H)$ be a unital operator algebra and let $\pi:\rC^*(\A)\to B(\H_\pi)$ be a unital $*$-representation. Assume that $\fs_\pi\in \A^{\perp\perp}$. Then, 
$
\rC^*(\A\cap \ker\pi)=\ker \pi.
$
\end{lemma}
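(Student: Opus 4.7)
The inclusion $\rC^*(\A\cap\ker\pi)\subseteq\ker\pi$ is immediate, since $\ker\pi$ is a closed two-sided $*$-ideal of $\rC^*(\A)$ that contains $\A\cap\ker\pi$. For the reverse, my plan is a Hahn--Banach argument inside $\rC^*(\A)^{**}$: by the bipolar theorem, $\rC^*(\A\cap\ker\pi)^{\perp\perp}\cap\rC^*(\A)=\rC^*(\A\cap\ker\pi)$ (since the latter is already norm-closed), so it suffices to show that every $t\in\ker\pi$ lies in the weak-$*$ closure $\rC^*(\A\cap\ker\pi)^{\perp\perp}$ inside $\rC^*(\A)^{**}$.

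The hypothesis $\fs_\pi\in\A^{\perp\perp}$ is engaged via Goldstine's theorem applied to the canonical isometric identification $\A^{**}\cong\A^{\perp\perp}$: it yields a net $(a_\lambda)$ in the closed unit ball of $\A$ with $a_\lambda\to\fs_\pi$ weak-$*$ in $\rC^*(\A)^{**}$. Since $\fs_\pi$ is central and $(\ker\pi)^{\perp\perp}=\rC^*(\A)^{**}(I-\fs_\pi)$, we have $t\fs_\pi=0$ and $t=t(I-\fs_\pi)$ in $\rC^*(\A)^{**}$ for every $t\in\ker\pi$. Consequently $t-ta_\lambda\to t$ weak-$*$, and each $t-ta_\lambda$ lies in $\ker\pi$ since $\ker\pi$ is an ideal.

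The principal obstacle, and the heart of the argument, is to refine this weak-$*$ approximation so that the approximants actually land in $\rC^*(\A\cap\ker\pi)^{\perp\perp}$ rather than merely in $\ker\pi$. The strategy is to first norm-approximate $t$ by linear combinations of $*$-monomials $a_1^{\epsilon_1}\cdots a_m^{\epsilon_m}$ in elements of $\A$, and then right-multiply each monomial by $I-a_\lambda\in\A$; the rightmost letter becomes a product $a_m^{\epsilon_m}(I-a_\lambda)$ whose weak-$*$ limit is $a_m^{\epsilon_m}(I-\fs_\pi)$. A symmetric weak-$*$ approximation from the left (available again because $I-\fs_\pi\in\A^{\perp\perp}$) treats the leftmost factor in the same way. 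The iterated weak-$*$ limits place each such doubly boundary-modified monomial inside the weak-$*$ closed $*$-subalgebra $\rC^*(\A\cap\ker\pi)^{\perp\perp}$, and the norm limit in the polynomial-degree index recovers $t$. The main delicacy is in organizing these nested limits so that all intermediate products genuinely stay inside $\rC^*(\A\cap\ker\pi)^{\perp\perp}$, which I expect to handle by exploiting that this subalgebra is weak-$*$ closed and therefore stable under weak-$*$ continuous multiplication against its own elements, combined with the fact that the boundary factors arising from the approximation are themselves weak-$*$ limits of elements of $\A\cap\ker\pi$ after accounting for adjoints via $I-\fs_\pi=(I-\fs_\pi)^*$.
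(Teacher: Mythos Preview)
Your overall architecture mirrors the paper's: both reduce to showing $\ker\pi\subset\rC^*(\A\cap\ker\pi)^{\perp\perp}$, and both ultimately need the identity $\A^{\perp\perp}(I-\fs_\pi)\subset(\A\cap\ker\pi)^{\perp\perp}$. The paper obtains this from \cite[Theorem~3.3]{blecher2013}, which uses in an essential way that $\fs_\pi$ is a \emph{closed} projection. Your proposal, however, asserts precisely this fact without proof when you write that ``the boundary factors arising from the approximation are themselves weak-$*$ limits of elements of $\A\cap\ker\pi$.'' This is the entire content of the lemma, and your Goldstine-based scheme does not deliver it.

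Concretely: your net $(a_\lambda)\subset\A$ with $a_\lambda\to\fs_\pi$ gives $a(I-a_\lambda)\in\A$ but $\pi\bigl(a(I-a_\lambda)\bigr)=\pi(a)\pi(I-a_\lambda)$ is typically nonzero, so $a(I-a_\lambda)\notin\ker\pi$. Dually, an approximate identity $(e_i)$ for $\ker\pi$ gives $ae_i\in\ker\pi$ but $ae_i\notin\A$. Thus neither route produces approximants in $\A\cap\ker\pi$, and your ``iterated weak-$*$ limits'' cannot land in $(\A\cap\ker\pi)^{\perp\perp}$ without an additional ingredient. What you are implicitly using is that $\A^{\perp\perp}\cap(\ker\pi)^{\perp\perp}=(\A\cap\ker\pi)^{\perp\perp}$; one inclusion is trivial, but the other is a genuine noncommutative interpolation statement that fails for arbitrary pairs of subspaces and here relies on $\fs_\pi$ being closed. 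Once you invoke Blecher's theorem for this step, your monomial decomposition becomes essentially a reorganization of the paper's argument via the $*$-homomorphism $t\mapsto t(I-\fs_\pi)$.
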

\begin{proof}
Throughout the proof, we let $\fT=\rC^*(\A)$ and $p=I-\fs_\pi$. Since $p$ is central in $\fT^{**}$, the map $\lambda:\fT\to\fT^{**}$ defined as
\[
\lambda(t)=t p,\quad t\in \fT
\]
is a $*$-homomorphism, so that
\[
\fT p=\lambda(\fT)=\rC^*(\lambda(\A))=\rC^*(\A p).
\]
On the other hand,
\[
\ker \pi=\fT\cap \ker (\pi^{**})=\{t\in \fT:t p=t\}\subset \fT p
\]
whence 
\begin{equation}\label{Eq:Ap1}
\ker \pi\subset \rC^*(\A p).
\end{equation}
Since $\fs_\pi=I-p$ is closed, we know from \cite[Theorem 3.3]{blecher2013} that
\[
\A^{\perp\perp}p=(\A\cap \ker \pi)^{\perp\perp}
\]
so in particular 
\begin{equation}\label{Eq:Ap2}
\A p\subset \A^{\perp\perp}p=(\A\cap \ker \pi)^{\perp\perp}\subset (\rC^*(\A\cap \ker \pi))^{\perp\perp}.
\end{equation}
Combining \eqref{Eq:Ap1} and \eqref{Eq:Ap2}, we infer that 
\[
\ker \pi\subset  \rC^*(\A p)\subset  (\rC^*(\A\cap \ker \pi))^{\perp\perp}
\]
 so that $\ker \pi$ lies in the closure of $\rC^*(\A\cap \ker \pi)$ in the weak topology of $\fT$. Since $\rC^*(\A\cap \ker \pi)$ is a norm-closed subspace, it is also weakly closed, and we conclude that $\ker\pi\subset \rC^*(\A\cap \ker \pi)$. The reverse inclusion is trivial.
\end{proof}

Before proceeding, we need to introduce some terminology from matrix convexity theory (see \cite{farenick2000} or \cite{HL2021} and the references therein). Let $\S$ be an operator system. For each positive integer $n\geq 1$, let $K_n(\S)$ denote the set of all unital completely positive linear maps $\psi:\S\to\bM_n$. Then, each $K_n$ is a convex set that is closed in the topology of pointwise norm convergence. In fact, more is true: the sequence $K(\S)=(K_n(\S))$ forms a compact  \emph{matrix convex set} on $\S^*$ (endowed with the weak-$*$ topology). By \cite[Theorem A]{farenick2000}, the \emph{matrix extreme points} of $K(\S)$ are the pure completely positive maps as introduced in \cite{arveson1969}.

\begin{lemma}\label{L:matconv}
Let $\S\subset B(\H)$ be an operator system, let $\X\subset \S$ be a finite-dimensional operator subsystem, let $\F$ be a finite-dimensional Hilbert space and let $\phi:\X\to B(\F)$ be a unital completely positive map. Let $a\in \X$ be a contraction such that $\phi(a)=I$. Then, there is a finite set $\sB$ of boundary representations for $\S$ on $\rC^*(\S)$ such that 
\[
\X \cap \left(\bigcap_{\beta\in \sB}\ker \beta\right)\subset \ker \phi
\]
and for every $\beta\in \sB$ there is a finite-dimensional subspace $G_\beta$ such that 
\[
\|P_{G_\beta}\beta(a)|_{G_\beta}\|=1.
\]
\end{lemma}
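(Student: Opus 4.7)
The plan is to combine a finite matrix-convex decomposition from \cite{HL2021} with the standard dilation of pure UCP maps to boundary representations.

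First, Arveson's extension theorem yields a UCP map $\tilde\phi\colon\S\to B(\F)$ that agrees with $\phi$ on $\X$, so that $\tilde\phi$ is a point of the compact matrix convex set $K(\S)$ at level $n=\dim\F$. Since the matrix extreme points of $K(\S)$ are precisely the pure UCP maps on $\S$ by \cite[Theorem A]{farenick2000}, applying the finite matrix Krein--Milman theorem from \cite{HL2021} would give a representation
\[
\tilde\phi=\sum_{i=1}^{k}\alpha_i^{*}\phi_i\alpha_i,
\]
with each $\phi_i\colon\S\to B(\F_i)$ pure UCP, each $\F_i$ finite-dimensional, each $\alpha_i\colon\F\to \F_i$ linear, and $\sum_i\alpha_i^{*}\alpha_i=I_\F$. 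Any term with $\alpha_i=0$ may be discarded.

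Next, the standard dilation theory of boundary representations (Arveson / Dritschel--McCullough / Davidson--Kennedy) provides, for each pure UCP map $\phi_i$, a boundary representation $\beta_i\colon\rC^*(\S)\to B(\H_{\beta_i})$ of $\S$ together with an isometric embedding $\F_i\hookrightarrow\H_{\beta_i}$ satisfying $\phi_i(s)=P_{\F_i}\beta_i(s)|_{\F_i}$ for every $s\in \S$. Set $\sB:=\{\beta_1,\dots,\beta_k\}$. If $x\in \X$ satisfies $\beta_i(x)=0$ for every $i$, then $\phi_i(x)=P_{\F_i}\beta_i(x)|_{\F_i}=0$, and hence
\[
\phi(x)=\tilde\phi(x)=\sum_i\alpha_i^{*}\phi_i(x)\alpha_i=0,
\]
establishing the first containment.

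For the norm attainment, evaluating the decomposition at $a$ gives $I=\sum_i\alpha_i^{*}\phi_i(a)\alpha_i$; adding the adjoint identity yields $\sum_i\alpha_i^{*}\bigl(2I-\phi_i(a)-\phi_i(a)^{*}\bigr)\alpha_i=0$. Each summand is positive because $\|\phi_i(a)\|\le 1$, so $\bigl(2I-\phi_i(a)-\phi_i(a)^{*}\bigr)\alpha_i=0$ for every $i$. Consequently, for any unit vector $v$ in the range of $\alpha_i$, one has $\re\langle \phi_i(a)v,v\rangle=1$, and the equality case of Cauchy--Schwarz upgrades this to $\langle \phi_i(a)v,v\rangle=1$. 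Since $v\in \F_i$ and $\phi_i(a)=P_{\F_i}\beta_i(a)|_{\F_i}$, we obtain $\langle \beta_i(a)v,v\rangle=1$, so $G_{\beta_i}:=\spn\{v\}$ achieves $\|P_{G_{\beta_i}}\beta_i(a)|_{G_{\beta_i}}\|=1$.

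The main obstacle is securing the \emph{finite} matrix convex decomposition in the first step: the classical Webster--Winkler matrix Krein--Milman theorem only produces an approximation within the closure of the matrix convex hull, so one genuinely needs the recent exactness result from \cite{HL2021}. Once this decomposition is in hand, the dilation to boundary representations is a routine appeal to boundary representation theory, and the verification of both conclusions is straightforward.
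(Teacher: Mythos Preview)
There is a genuine gap in your first step. You extend $\phi$ to a UCP map $\tilde\phi$ on $\S$ and then invoke the finite Carath\'eodory--Minkowski theorem from \cite{HL2021} for the matrix convex set $K(\S)$. That result, however, is a finite-dimensional statement: it applies to compact matrix convex sets sitting inside a \emph{finite-dimensional} dual space, and yields an exact finite matrix convex combination of matrix extreme points only in that setting. In the lemma, only the subsystem $\X$ is assumed finite-dimensional; $\S$ is arbitrary. So you cannot in general decompose $\tilde\phi$ as a finite matrix convex combination of pure UCP maps on $\S$.

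The paper's proof avoids this by applying \cite[Theorem 2.9]{HL2021} to $K(\X)$ instead, obtaining pure UCP maps $\psi_i\colon\X\to\bM_{n_i}$. The passage to $\S$ is then handled by a separate ingredient you omitted: Farenick's extension theorem \cite[Theorem B]{farenick2000}, which guarantees that each pure $\psi_i$ on $\X$ extends to a \emph{pure} UCP map $\theta_i$ on $\S$ (an arbitrary Arveson extension would not preserve purity, and purity is what allows the dilation to a single boundary representation via \cite[Theorem 2.4]{DK2015}). Once you insert this step---decompose over $\X$, then extend pure maps to $\S$---the rest of your argument (the kernel containment and the norm-attainment computation using $\phi(a)=I$) goes through essentially as written.
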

\begin{proof}
Apply  \cite[Theorem 2.9]{HL2021} to the matrix convex set $K(\X)$ to find finitely many pure unital completely positive maps $\psi_1,\ldots,\psi_{r}$ in $K(\X)$ along with non-zero operators $V_1,\ldots,V_{r}$ such that $\sum_{i=1}^r V^*_i V_i=I$ and
\[
\phi(b)=\sum_{i=1}^{r}V^*_i\psi_{i}(b)V_i, \quad b\in \X.
\]
Now, by \cite[Theorem B]{farenick2000}, for each $1\leq i\leq r$ there is a pure unital completely positive map $\theta_i$ in $K(\S)$ extending $\psi_{i}$ (alternatively, see \cite[Corollary 2.3]{kleski2014bdry}). Hence,
\[
\phi(b)=\sum_{i=1}^{r}V^*_i\theta_{i}(b)V_i, \quad b\in \X.
\]
Fix $1\leq j\leq r$ and let $\xi\in \F$ be a unit vector such that $V_j\xi\neq 0$. 
Using that $\phi(a)=I$ and the Cauchy--Schwarz inequality, we find
 \begin{align*}
 1&=\langle \phi(a)\xi,\xi \rangle= \sum_{i=1}^r \langle \theta_i(a)V_i \xi,V_i \xi \rangle\leq \sum_{i=1}^r \|\theta_i(a) V_i\xi\| \|V_i\xi\|\\
 &\leq \sum_{i=1}^r \|V_i\xi\| ^2=\|\xi\|^2=1.
  \end{align*}
  We conclude that $\theta_i(a)V_i\xi=V_i\xi$ for every $1\leq i \leq r$. Since $V_j\xi\neq 0$, this implies that $\|\theta_j(a)\|=1$.
Furthermore, invoking \cite[Theorem 2.4]{DK2015}, we see that each $\theta_i$ dilates to some boundary representation $\beta_i$ for $\S$ on $\rC^*(\S)$. Since the range of $\theta_i$ is finite-dimensional, this implies the existence of a finite-dimensional subspace $G_i$ such that $\|P_{G_i}\beta_i(a)|_{G_i}\|= \|\theta_i(a)\|=1$.
Finally, we see that
 \[
\X\cap \left(\bigcap_{i=1}^r\ker \beta_i\right)\subset \X\cap\left( \bigcap_{i=1}^r\ker \theta_i \right)\subset \ker \phi.
\]
\end{proof}

We now arrive at our next main result, showing that $\A$-peak projections are related in some way to the Choquet boundary of $\A$, and thereby addressing Question (c).

\begin{theorem}\label{T:peakprojker}
Let $\A\subset B(\H)$ be a unital operator algebra. Let $\pi:\rC^*(\A)\to B(\H_\pi)$ be a unital $*$-representation  whose support projection is an $\A$-peak projection. Then, there is a set $\sB_\pi$ of boundary representations for $\A$ on $\rC^*(\A)$ such that if we set $\fJ_\pi=\bigcap_{\beta\in \sB_\pi}\ker \beta$, then 
\[
\A\cap \fJ_\pi\subset \ker \pi\subset \fJ_\pi \qand \rC^*(\A\cap \fJ_\pi)=\ker \pi.
\]
\end{theorem}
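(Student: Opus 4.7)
The peaking hypothesis supplies a contraction $a \in \A$ with $a\fs_\pi = \fs_\pi$. Applying $\widehat\pi$ and using $\widehat\pi(\fs_\pi) = I$ forces $\pi(a) = I$, so $\|\pi(a^*a)\| = 1$. Moreover, the standard fact underlying the implication (ii)$\Rightarrow$(i) of Theorem \ref{T:peakdiff} shows that any $\A$-peak projection is closed and belongs to $\A^{\perp\perp}$; in particular $\fs_\pi\in\A^{\perp\perp}$, and Lemma \ref{L:kerA} immediately gives $\rC^*(\A\cap\ker\pi)=\ker\pi$. Thus, if I can construct $\sB_\pi$ so that $\ker\pi \subset \fJ_\pi$ and $\A\cap\fJ_\pi \subset \ker\pi$, then $\A\cap\fJ_\pi = \A\cap\ker\pi$ and the last identity of the theorem follows automatically.

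To build $\sB_\pi$, I proceed element by element through $\A\setminus\ker\pi$. Fix such a $b$, choose a unit vector $\xi_b \in \H_\pi$ with $\pi(b)\xi_b \neq 0$, let $F_b = \spn\{\xi_b,\pi(b)\xi_b\}$, and define the unital completely positive compression
\[
\phi_b : \A+\A^* \to B(F_b), \qquad \phi_b(t) = P_{F_b}\pi(t)|_{F_b}.
\]
Since $\pi(a) = I$ we have $\phi_b(a) = I_{F_b}$, and $\phi_b(b) \neq 0$ by the choice of $F_b$. Let $\X_b = \spn\{I, a, a^*, b, b^*\}$, a finite-dimensional operator subsystem of the operator system $\A + \A^*$. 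Lemma \ref{L:matconv}, applied to $\phi_b|_{\X_b}$, returns a finite set $\sB_b$ of boundary representations for $\A + \A^*$---equivalently for $\A$---satisfying
\[
\X_b \cap \bigcap_{\beta \in \sB_b}\ker\beta \ \subset\ \ker\phi_b,
\]
and such that each $\beta \in \sB_b$ admits a finite-dimensional subspace $G_\beta \subset \H_\beta$ with $\|P_{G_\beta}\beta(a)|_{G_\beta}\| = 1$. A unit vector $\eta \in G_\beta$ witnessing this equality must satisfy $\|\beta(a)\eta\| = 1$ and hence $\langle \beta(a^*a)\eta,\eta \rangle = 1$; taking $F = \bC\eta$ we obtain $\|P_F \beta(a^*a)|_F\| = 1 = \|\pi(a^*a)\|$, and Lemma \ref{L:peaktrick} then forces $\ker\pi \subset \ker\beta$.

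Set $\sB_\pi = \bigcup_{b \in \A\setminus\ker\pi}\sB_b$ (and $\sB_\pi = \varnothing$ if this range is empty). The containment $\ker\pi \subset \fJ_\pi$ is then automatic, since every $\beta \in \sB_\pi$ satisfies $\ker\pi \subset \ker\beta$. Conversely, if $b \in \A$ with $\pi(b) \neq 0$, then $\phi_b(b) \neq 0$ forces some $\beta \in \sB_b \subset \sB_\pi$ to satisfy $\beta(b) \neq 0$, so $b \notin \fJ_\pi$; this establishes $\A\cap\fJ_\pi \subset \ker\pi$, and the reduction of the first paragraph finishes the argument. The principal obstacle is to extract from a possibly infinite-dimensional representation $\pi$ a family of boundary representations of $\A$ that simultaneously inherit enough of the peaking data on $a$ to sit above $\pi$ in the kernel order, while remaining sensitive enough to separate every element of $\A$ outside $\ker\pi$; tailoring the compression $\phi_b$ so that $\phi_b(a) = I_{F_b}$ and $\phi_b(b) \neq 0$ feeds the matrix-convexity machinery of Lemma \ref{L:matconv} precisely the data needed to accomplish both tasks at once.
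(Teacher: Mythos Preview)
Your proof is correct and follows essentially the same approach as the paper: both arguments compress $\pi$ to finite-dimensional pieces, apply Lemma \ref{L:matconv} to obtain boundary representations detecting those pieces, and invoke Lemma \ref{L:peaktrick} (via the condition $\phi(a)=I$) to ensure each resulting boundary representation $\beta$ satisfies $\ker\pi\subset\ker\beta$. The only difference is organizational: the paper indexes over the full directed set of pairs $(\X,\F)$ and recovers $\pi(b)$ as a strong-operator limit of the $\phi_\lambda(b)$, whereas you index directly over $b\in\A\setminus\ker\pi$ and tailor a single pair $(\X_b,F_b)$ with $\phi_b(b)\neq 0$, which makes the inclusion $\A\cap\fJ_\pi\subset\ker\pi$ immediate without any limiting argument.
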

\begin{proof}
Throughout the proof, we let $\S=\A+\A^*\subset B(\H)$.
Choose a contraction $a\in \A$ peaking at $\fs_\pi$. Let $\O$ denote the set of finite-dimensional, unital, self-adjoint subspaces of $\S$ containing $a$, partially ordered by inclusion. Let $\E$ denote the set of finite-dimensional subspaces of $\H_\pi$, partially ordered by inclusion. Consider the directed set $\Lambda=\O\times \E$, equipped with the usual product partial order. Given $\lambda=(\X,\F)\in \Lambda$, define a unital completely positive map $\phi_\lambda:\X\to B(\F)$ as
\[
\phi_{\lambda}(b)=P_{\F}\pi(b)|_\F, \quad b\in \X.
\]
Note that $\phi_\lambda(a)=I$ since $\pi(a)=I$.
Apply Lemma \ref{L:matconv} to find a finite subset $\sB_\lambda$ of boundary representations for $\A$ on $\rC^*(\A)$ such that 
\begin{equation}\label{Eq:kerincl}
\X \cap \left(\bigcap_{\beta\in \sB_\lambda}\ker \beta\right)\subset \ker \phi_\lambda
\end{equation}
and for every $\beta\in \sB_\lambda$ there is a finite-dimensional subspace $G_{\beta}$ such that 
\[
\|P_{G_\beta}\beta(a)|_{G_\beta}\|=1.
\]
By the Schwarz inequality, we find 
\[
\|P_{G_\beta}\beta(a^*a)|_{G_\beta}\|=1, \quad \beta\in \sB_\lambda
\]
so Lemma \ref{L:peaktrick} yields
$
\ker \pi\subset \bigcap_{\beta\in \sB_\lambda}\ker \beta.
$
If we put $\sB_\pi=\bigcup_{\lambda\in \Lambda}\sB_\lambda$ and $\fJ_\pi=\bigcap_{\beta\in \sB_\pi}\ker \beta$, then $\ker\pi\subset \fJ_\pi$.
Let $b\in  \A\cap \fJ_\pi$. 
 Choose $\lambda_0\in \Lambda$ with the property that $b\in \X$ for every $\lambda=(\X,\F)\geq \lambda_0$. By Equation \eqref{Eq:kerincl}, we see that $b\in \bigcap_{\lambda\in \Lambda,\lambda\geq \lambda_0}\ker \phi_\lambda$.
On the other hand, we find
\[
\pi(b)=\lim_{\lambda\in \Lambda,\lambda\geq \lambda_0}\phi_\lambda(b)
\]
in the strong operator topology, so that $\pi(b)=0$. This shows that $\A\cap \fJ_\pi\subset \ker \pi$, so we conclude that
\[
\A\cap \fJ_\pi\subset \ker \pi\subset \fJ_\pi.
\]
Finally, using the previous inclusions we infer $\A\cap \ker \pi=\A\cap \fJ_\pi$. By \cite[Lemma 3.6]{hay2007}, we see that $\fs_\pi\in \A^{\perp\perp}$,  whence $\ker \pi=\rC^*(\A\cap \fJ_\pi)$ by Lemma \ref{L:kerA}.
\end{proof}

We close this section by showing that the set $\sB_\pi$ in the previous theorem cannot generally be taken be a singleton (or even  finite), and that the conclusion cannot be improved to $\fJ_\pi=\ker \pi$.

\begin{example}\label{E:singleton}
Let $\fT_1$ be the Toeplitz algebra and $\A_1\subset \fT_1$ be the unital subalgebra defined in Example \ref{E:peakAD}. Let $\pi$ denote the identity representation of $\fT_1$, which is irreducible since $\fT_1$ contains the compact operators. There is a surjective $*$-homomorphism $q:\fT_1\to \rC(\bT)$ that is completely isometric on $\A_1$. It follows from this that the boundary representations for $\A_1$ are precisely those of the form $\chi_\zeta\circ q$, where for $\zeta\in \bT$, the map $\chi_\zeta:\rC(\bT)\to\bC$ is the character of evaluation at $\zeta$.

Since $\pi$ is injective, it is trivial that its support projection lies in $\A_1^{\perp\perp}$, and thus is an $\A_1$-peak projection by Theorem \ref{T:ncGlick}. By Theorem \ref{T:peakprojker}, there is a set $\sB\subset \bT$ such that if we set $\fJ=\bigcap_{\zeta\in \sB}\ker(\chi_\zeta\circ q)$, then
\[
\rC^*(\A_1\cap \fJ)=\ker \pi=\{0\}.
\]
Indeed, this can be achieved, for instance, by choosing $\sB=\bT$.

If $\sB\subset \bT$ were finite, then  $\prod_{\zeta\in \sB}(S-\zeta I)$ would be a non-zero element of $\A_1\cap \fJ$, contradicting the previous equality. Thus, $\sB$ must be infinite. Furthermore, we see that the compact operators are contained in $\ker (\chi_\zeta\circ q)$ for every $\zeta\in \bT$, and in particular in $\fJ$. This shows that $\ker\pi\neq \fJ$. 
\qed
\end{example}

We remark here that given an irreducible $*$-representation $\pi$ whose support projection is an $\A$-peak projection, the fact that $\ker \pi$ coincides with the kernel of a boundary representation for $\A$ does not guarantee that $\pi$ itself is a boundary representation; see for instance \cite[Continuation of Example 2.7]{muhly1998tensor} or \cite[Example 6.6.3]{DK2019}.

\section{The Bishop property}\label{S:Bishop}

In Section \ref{S:intro}, we discussed how, for uniform algebras, a theorem of Bishop shows that every point in the Choquet boundary is necessarily a peak point. Correspondingly, given a unital $\rC^*$-algebra and a unital operator algebra $\A\subset \fT$, we say that $\A$ has the \emph{Bishop property} in $\fT$ whenever all boundary representations for $\A$ on $\fT$ are local $\A$-peak representations. Due to an earlier argument, this property is seen to always hold for separable $\rC^*$-algebras.

\begin{corollary}\label{C:C*Bishop}
Any separable unital $\rC^*$-algebra has the Bishop property in itself.
\end{corollary}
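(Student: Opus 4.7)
The plan is essentially to observe that this corollary is a repackaging of Corollary \ref{C:C*peak}. The Bishop property for $\fT$ inside $\fT$ asserts that every boundary representation for $\fT$ on $\fT$ is a local $\fT$-peak representation, so I need to identify the boundary representations and then invoke the earlier corollary.

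First I would note that for a unital $\rC^*$-algebra $\fT$, \emph{every} irreducible $*$-representation $\pi$ of $\fT$ is automatically a boundary representation for $\fT$ on itself. Indeed, $\pi$ is clearly one completely positive extension of $\pi|_\fT = \pi$ to $\fT$, and a completely positive map on a $\rC^*$-algebra is determined by its values on the algebra, so uniqueness is trivial. Consequently, the class of boundary representations for $\fT$ on itself coincides with the class of all irreducible $*$-representations of $\fT$.

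Next, Corollary \ref{C:C*peak} (already established in the excerpt) asserts that every irreducible $*$-representation of a separable unital $\rC^*$-algebra $\fT$ is a local $\fT$-peak representation. Combining these two observations yields precisely that every boundary representation for $\fT$ on $\fT$ is a local $\fT$-peak representation, i.e.\ $\fT$ has the Bishop property in itself.

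There is no real obstacle here: the substantive content was carried out earlier in establishing Corollary \ref{C:C*peak}, which itself rested on Lemma \ref{L:purestateopen} (closedness of left support projections of pure states in the separable setting) together with the implication chain (i)$\Rightarrow$(ii)$\Rightarrow$(iv) of Theorem \ref{T:peakdiff}. The present corollary is then merely the observation that in the setting $\A = \fT$, the boundary representations are all the irreducible ones.
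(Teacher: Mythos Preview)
Your proof is correct and follows exactly the paper's approach: the paper's proof is the single line ``This follows at once from Corollary \ref{C:C*peak},'' and you have simply unpacked why that corollary suffices by noting that boundary representations for $\fT$ on itself are precisely the irreducible $*$-representations. (One small inaccuracy in your parenthetical: Lemma \ref{L:purestateopen} does not require separability; the separability enters in Theorem \ref{T:ncGlick} via the (i)$\Rightarrow$(ii) direction of Theorem \ref{T:peakdiff}.)
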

\begin{proof}
This follows at once from Corollary \ref{C:C*peak}.
\end{proof}

Our next order of business is to illustrate that the Bishop property is not automatic for non self-adjoint operator algebras. Consequently, Bishop's theorem does not extend to the non-commutative context. The following is inspired by \cite[page 43]{phelps2001}.
 
 \begin{example}\label{E:nonBishop}
 Let $E$ be a locally convex topological vector space and let $K\subset E$ be a compact convex subset containing an extreme point $\xi$ which is  \emph{not exposed}. That is, there is no continuous linear functional $\phi:E\to\bC$ such that 
 \[
 \re \phi(x)<\re \phi(\xi)
 \]
 for every $x\in K$ with $x\neq \xi$.
 
 Let $E^*$ denote the space of continuous linear functionals on $E$. Let $\S \subset \rC(K)$ be the norm-closed operator system generated by the  unit and $E^*$. 
 Consider the unital operator algebra $\A\subset \bM_2(\rC(K))$ consisting of elements of the form
 \[
 \begin{bmatrix}
 \lambda & f\\ 0 & \mu
 \end{bmatrix}
 \]
 where $\lambda,\mu\in \bC$ and $f\in\S$. It is readily verified that $\rC^*(\A)=\bM_2(\rC(K))$. 
 
 Let $\beta:\rC(K)\to \bC$ denote the character of evaluation at $\xi$. Since $\xi$ is an extreme point of $K$,  $\beta$ is  is a boundary representation for $\S$ \cite[Lemmas 2.2 and 4.1]{bishop1959}. It then follows from \cite{hopenwasser1973} that $\beta^{(2)}$ is a boundary representation for $\A$. 
 
 Assume that $\beta^{(2)}$ is a local $\A$-peak representation. Note that all irreducible $*$-representations of $\bM_2(\rC(K))$ are unitarily equivalent to one of the form $\pi^{(2)}$, where $\pi:\rC(K)\to\bC$ is evaluation at some $x\in K$. Thus,
 there is an integer $n\geq 1$ and an element $B\in \bM_n(\A)$ such that 
  \[
  \|B(\xi)\|=1>\|B(x)\|, \quad x\neq \xi.
 \]
 Note that $B\in \bM_{2n}(\S)$, so there is $\Lambda\in\bM_{2n}$ and $F\in \bM_{2n}(E^*)$ such that $B=\Lambda+F$.
 Choose unit vectors $v,w\in \bC^{2n}$ with the property that $\|B(\xi)\|=\langle B(\xi)v,w\rangle$. We thus find
 \begin{align*}
 \re \langle \Lambda v,w \rangle+\re \langle F(\xi) v,w \rangle&=\re \langle B(\xi)v,w\rangle=\|B(\xi)\|\\
 &> \|B(x)\|\geq \re \langle B(x)v,w\rangle\\
 &= \re \langle \Lambda v,w \rangle+\re \langle F(x) v,w \rangle
 \end{align*}
 or
 \[
 \re \langle F(\xi) v,w \rangle>\re \langle F(x) v,w \rangle
 \]
 for every $x\in K, x\neq \xi$. This contradicts the fact that $\xi$ is not an exposed point of $K$. Consequently, $\beta^{(2)}$ is a boundary representation for $\A$ which is not a local $\A$-peak representation. Hence, $\A$ does not have the Bishop property inside of $\bM_2(\rC(K))$. 
 \qed
  \end{example}

We will show in Subsection \ref{SS:min} that the Bishop property encodes a certain minimality feature of the Choquet boundary for an operator algebra. 
For now, our aim is to identify non-commutative and non self-adjoint operator algebras with the Bishop property. We will be particularly interested in those algebras $\A$ for which $\rC^*(\A)$ contains the compact operators. Our analysis in this direction hinges on the following.

\begin{lemma}\label{L:Kbdrypeak}
Let $\A\subset B(\H)$ be a unital operator algebra with the property that $\rC^*(\A)$ contains the ideal of compact operators on $\H$. 
Then, the identity representation of $\rC^*(\A)$ is a boundary representation for $\A$ if and only if it is a local $\A$-peak representation. 
\end{lemma}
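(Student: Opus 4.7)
The plan is to handle the easy direction first: if the identity representation is a local $\A$-peak representation, then it is a boundary representation by Theorem \ref{T:locpeakbdry}, with no use of the compact-operator hypothesis.

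For the substantive direction, I would invoke Arveson's boundary theorem \cite[Theorem 2.1.1]{arveson1972} in the form actually used earlier in Example \ref{E:peakAD}: since $\rC^*(\A)$ contains $\K(\H)$, the identity representation is a boundary representation for $\A$ precisely when the quotient map $q:\rC^*(\A)\to\rC^*(\A)/\K(\H)$ fails to be completely isometric on $\A$. Thus I can pick $n\geq 1$ and $B\in\bM_n(\A)$ with $\|B\|=1$ while $\|q^{(n)}(B)\|=r$ for some $r<1$.

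The second ingredient is the standard fact that the only (up to unitary equivalence) irreducible $*$-representation of $\rC^*(\A)$ which is non-zero on $\K(\H)$ is the identity representation itself. This is because $\K(\H)$ is a simple $\rC^*$-algebra with a unique (up to equivalence) irreducible $*$-representation, namely its defining representation on $\H$; any irreducible $\sigma$ that does not agree with it must kill $\K(\H)$, and hence factors through the quotient $\rC^*(\A)/\K(\H)$.

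Combining these two observations gives the conclusion directly: for any irreducible $*$-representation $\sigma:\rC^*(\A)\to B(\H_\sigma)$ not unitarily equivalent to the identity, $\sigma$ factors as $\bar\sigma\circ q$ for some $*$-representation $\bar\sigma$ of the quotient, so
\[
\|P_G\sigma^{(n)}(B)|_G\|\leq \|\sigma^{(n)}(B)\|=\|\bar\sigma^{(n)}(q^{(n)}(B))\|\leq \|q^{(n)}(B)\|=r<1
\]
for every finite-dimensional subspace $G\subset\H_\sigma^{(n)}$. This is exactly the defining condition for the identity representation to be a local $\A$-peak representation. I do not anticipate a major obstacle: the proof is essentially an assembly of Arveson's boundary theorem with the representation theory of $\K(\H)$, and the finite-dimensional subspace $G$ plays no real role since it is automatically dominated by the full norm.
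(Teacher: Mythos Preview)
Your proposal is correct and follows essentially the same argument as the paper: invoke Theorem \ref{T:locpeakbdry} for the easy direction, and for the other direction use Arveson's boundary theorem to produce $B\in\bM_n(\A)$ with $\|q^{(n)}(B)\|<\|B\|$, then observe that every irreducible $\sigma$ inequivalent to the identity annihilates $\K(\H)$ and hence factors through $q$, giving $\|\sigma^{(n)}(B)\|\leq\|q^{(n)}(B)\|<\|B\|$. The paper's proof is identical in structure, only omitting the explicit mention of the subspace $G$ since the full norm bound already suffices.
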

\begin{proof}
Let $\pi:\rC^*(\A)\to B(\H)$ be the identity representation and let $q:\rC^*(\A)\to \rC^*(\A)/\fK$ denote the quotient map by the compact operators. Assume that $\pi$ is a boundary representation for $\A$. We may apply \cite[Theorem 2.1.1]{arveson1972} to find an integer $n\geq 1$ and an element $B\in \bM_n(\A)$ such that $\|q^{(n)}(B)\|<\|B\|$. Let $\sigma$ be another irreducible $*$-representation of $\rC^*(\A)$ which is not unitarily equivalent to $\pi$. Then, $\sigma$ must annihilate the compact operators, so we may find an irreducible $*$-representation $\sigma'$ of $\rC^*(\A)/\fK$ such that $\sigma=\sigma'\circ q$.  Hence
\[
\|\sigma^{(n)}(B)\|\leq \|q^{(n)}(B)\|<\|B\|=\|\pi^{(n)}(B)\|.
\]
This shows that $\pi$ is a local $\A$-peak representation. The converse follows at once from Theorem \ref{T:locpeakbdry}.
\end{proof}

The reader will notice that the previous proof shows that if the identity representation is a boundary representation for $\A$, then it is in fact an $\A$-peak representation in the sense of \cite{arveson2011}. Since we do not make use of this stronger notion in our work, we do not pursue this here.

Next, we delve into a concrete class of operator algebras.

 Let $d\geq 1$ be a positive integer. Let $\fF$ be a Hilbert space of holomorphic functions on the open unit ball $\bB_d\subset \bC^d$. We require $\fF$ to be a reproducing kernel Hilbert space, meaning that there is a positive semi-definite function $k:\bB_d\times \bB_d\to\bC$ with the property that $k(\cdot, \lambda)\in \fF$ and
 \[
 f(\lambda)=\langle f,k(\cdot,\lambda)\rangle, \quad f\in \fF
 \]
  for every every $\lambda\in \bB_d$. We will assume that $\fF$ is a so-called \emph{regular unitarily invariant} space: the kernel $k$ has the form $k(z,\lambda)=\sum_{n=0}^\infty a_n \langle z,\lambda\rangle^n$ for some sequence $(a_n)$ of strictly positive numbers with $a_0=1$ and $\lim_{n\to\infty} a_n/a_{n+1}=1$. We assume in addition that the kernel $k$ satisfies the complete Pick property \cite{agler2002}.

 The resulting class of regular, unitarily invariant, complete Pick spaces is highly structured and contains many frequently studied examples arising naturally in function theory and operator theory, such as the Hardy space on the disc, the classical Dirichlet space, as well as the Drury--Arveson space. The reader may consult \cite{CH2018},\cite{CT2021},\cite{DH2020} for further detail.
 
A function $\theta:\bB_d\to\bC$ is a \emph{multiplier} for $\fF$ if $\theta f\in \fF$ for every $f\in \fF$. The corresponding multiplication operator $M_\theta:\fF\to\fF$ is then bounded.
In our setting, polynomials are always multipliers, and we define $\rA(\fF)$ to be the unital norm-closed subalgebra of $B(\fF)$ generated by the polynomial multipliers. Let also $\fT(\fF)=\rC^*(\rA(\fF))$.  It follows from \cite[Theorem 4.6]{GHX04} that $\fT(\fF)$ contains the ideal of compact operators $\fK$, and there is a $*$-isomorphism $\fT(\fF)/\fK\cong \rC(\bS_d)$ such that $M_p+\fK\mapsto p$ for any polynomial $p$ (here and below, $\bS_d$ denotes the unit sphere). Let $q:\fT(\fF)\to \fT(\fF)/\fK$ denote the quotient map. For each $\zeta\in \bS_d$, we let $\chi_\zeta: \rC(\bS_d)\to \bC$ denote the character of evaluation at $\zeta$.

We now record a well-known fact describing the representation theory of $\fT(\fF)$. 
We say that the kernel $k$ is \emph{unbounded} when the series $\sum_{n=0}^\infty a_n$ diverges. 
\begin{lemma}\label{L:repTF}
Let $\fF$ be a regular, unitarily invariant, complete Pick space on the unit ball. Then, the following statements hold.
\begin{enumerate}[{\rm (i)}]
\item An irreducible $*$-representation of $\fT(\fF)$ is unitarily equivalent to either the identity representation or to $\chi_\zeta\circ q$ for some $\zeta\in \bS_d$.

\item Assume that $\fF$ is not the Hardy space on the unit disc and that the kernel $k$ is unbounded. Then, the boundary representations for $\rA(\fF)$ are precisely the identity representation of $\fT(\fF)$ along with  $\chi_\zeta\circ q$ for each $\zeta\in \bS_d$.

\item If the kernel $k$ is bounded, then the identity representation of $\fT(\fF)$ is the only boundary representation for $\rA(\fF)$.
\end{enumerate}
 \end{lemma}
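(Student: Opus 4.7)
The plan is to treat the three parts in turn. Part (i) is immediate from standard $\rC^*$-representation theory: since $\fK$ is an essential ideal in $\fT(\fF)$ with quotient $\rC(\bS_d)$, any irreducible $*$-representation $\sigma$ of $\fT(\fF)$ either restricts faithfully to $\fK$ -- in which case $\sigma$ must be unitarily equivalent to the identity representation, by the uniqueness of the irreducible representation of $\fK$ up to unitary equivalence -- or annihilates $\fK$ and hence factors through $\rC(\bS_d)$, whose irreducible representations are exactly the characters $\chi_\zeta$ for $\zeta \in \bS_d$.

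For part (ii), the closure of $q(\{M_p : p \text{ polynomial}\})$ in $\rC(\bS_d)$ is the restriction of the ball algebra to $\bS_d$, whose Choquet boundary is the full sphere (each sphere point is a peak point). Hence each $\chi_\zeta$ is a boundary representation for $q(\rA(\fF))$ inside $\rC(\bS_d)$, and a standard lifting along the lines of Hopenwasser's theorem (as invoked in Example \ref{E:nonBishop}) promotes this to $\chi_\zeta \circ q$ being a boundary representation for $\rA(\fF)$ inside $\fT(\fF)$. For the identity representation, I would apply Lemma \ref{L:Kbdrypeak}: the task reduces to showing that $q$ fails to be completely isometric on $\rA(\fF)$, i.e., that there exists $B \in \bM_n(\rA(\fF))$ with $\|q^{(n)}(B)\| < \|B\|$. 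This is precisely where the exclusion of the Hardy disc (for which matrix von Neumann's inequality makes $q$ completely isometric) and the unboundedness of the kernel matter; for all other regular, unitarily invariant, complete Pick spaces with unbounded kernel, the known analysis of multipliers on these spaces produces such a $B$.

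For part (iii), boundedness of the kernel gives $k_\zeta \in \fF$ for every $\zeta \in \bS_d$ with $\|k_\zeta\|^2 = \sum_n a_n < \infty$. A direct computation yields $M_p^* k_\zeta = \overline{p(\zeta)}\, k_\zeta$, so the vector state $\phi_\zeta(T) = \langle T k_\zeta, k_\zeta\rangle/\|k_\zeta\|^2$ agrees with $\chi_\zeta \circ q$ on $\rA(\fF)$. Evaluating on the rank-one projection onto the constant functions (a compact operator) shows that $\phi_\zeta$ does not vanish on $\fK$ whereas $\chi_\zeta \circ q$ does, so the restriction of $\chi_\zeta \circ q$ to $\rA(\fF)$ admits two distinct unital completely positive extensions to $\fT(\fF)$ and hence $\chi_\zeta \circ q$ cannot be a boundary representation. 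The identity representation, by contrast, remains a boundary representation: for bounded kernel one has $\|M_{z_1}\| > 1 = \|z_1\|_{\infty,\bS_d}$, so $q$ is not even isometric on $\rA(\fF)$, and Lemma \ref{L:Kbdrypeak} again applies. The most delicate piece of the whole proof is the matrix von Neumann failure invoked in part (ii); this is where the fine structure of regular, unitarily invariant, complete Pick spaces beyond the Hardy disc genuinely enters.
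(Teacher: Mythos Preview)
The paper's own proof is extremely brief: it simply cites \cite[Lemma 3.3]{CH2018} for (i) and \cite[Theorem 6.2]{CH2018} for (ii) and (iii). Your arguments for (i) and (iii) are correct and in fact supply the details behind those citations; in particular your construction of the competing state $\phi_\zeta$ via the kernel vector $k_\zeta$ is exactly the mechanism that makes (iii) work.

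There is, however, a genuine gap in your treatment of the characters in part (ii). The ``lifting'' you invoke is not what Hopenwasser's 1973 result does: that theorem concerns passing from a boundary representation $\beta$ of an operator system $\S$ to the amplification $\beta^{(n)}$ being a boundary representation for an associated algebra inside $\bM_n(\rC^*(\S))$. It says nothing about pulling boundary representations back along a quotient map $q:\fT(\fF)\to\rC(\bS_d)$. In fact, such a pullback principle is \emph{false} in general, and your own part (iii) furnishes the counterexample: when the kernel is bounded, each $\chi_\zeta$ is still a boundary representation for $q(\rA(\fF))$ inside $\rC(\bS_d)$ (the ball algebra argument you gave does not use unboundedness), yet $\chi_\zeta\circ q$ fails to be a boundary representation for $\rA(\fF)$. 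So the unboundedness of the kernel must enter the proof that $\chi_\zeta\circ q$ has the unique extension property, not merely the proof for the identity representation. Concretely, one must show that any state $\psi$ on $\fT(\fF)$ with $\psi(M_p)=p(\zeta)$ for all polynomials $p$ necessarily annihilates $\fK$; this is where the analysis in \cite{CH2018} does real work, and your sketch does not address it.

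A smaller point: for the identity representation in (ii) you cite Lemma~\ref{L:Kbdrypeak}, but that lemma gives the equivalence of ``boundary representation'' and ``local $\A$-peak representation'' for the identity. The reduction you actually want (``identity is a boundary representation if and only if $q$ is not completely isometric on $\A$'') is Arveson's boundary theorem \cite[Theorem 2.1.1]{arveson1972}, which is used \emph{inside} the proof of Lemma~\ref{L:Kbdrypeak} rather than being its statement.
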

 \begin{proof}
 Statement (i) follows from basic representation theory of $\rC^*$-algebras (see for instance \cite[Lemma 3.3]{CH2018}) along with the fact that $\fT(\fF)/\fK\cong \rC(\bS_d)$.
 
 Statements (ii) and (iii) follow from  \cite[Theorem 6.2]{CH2018}.
 \end{proof}

We now arrive at another one of our main results.

\begin{theorem}\label{T:AHBishop}
Let $\fF$ be a regular, unitarily invariant, complete Pick space on the unit ball.  Then, $\rA(\fF)$ has the Bishop property inside of $\fT(\fF)$.\end{theorem}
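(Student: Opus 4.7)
By Lemma \ref{L:repTF}, the boundary representations for $\rA(\fF)$ on $\fT(\fF)$ are the identity representation of $\fT(\fF)$ (in the cases where it is a boundary representation) together with the characters $\chi_\zeta \circ q$ for $\zeta \in \bS_d$. The plan is to treat these two types of representations separately.

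The identity representation case is immediate from Lemma \ref{L:Kbdrypeak}: because $\fT(\fF)$ contains the ideal of compact operators by \cite{GHX04}, being a boundary representation for $\rA(\fF)$ is equivalent to being a local $\rA(\fF)$-peak representation, so no further work is needed there.

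For a character $\chi_\zeta \circ q$ with $\zeta \in \bS_d$, I would invoke Theorem \ref{T:locpeak}, which applies since the representation is one-dimensional and therefore finite-dimensional. It suffices to show that the support projection $\fs = \fs_{\chi_\zeta \circ q} \in \fT(\fF)^{**}$ is an $\rA(\fF)$-peak projection. Using separability of $\rA(\fF)$ together with Theorem \ref{T:ncGlick}, this reduces to showing that $\fs$ is closed and lies in $\rA(\fF)^{\perp\perp}$. Closedness is standard, since $I - \fs$ is the central cover in $\fT(\fF)^{**}$ of the closed two-sided ideal $\ker(\chi_\zeta \circ q) \subset \fT(\fF)$. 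Under the decomposition $\fT(\fF)^{**} \cong B(\fF) \oplus \rC(\bS_d)^{**}$ induced by the exact sequence $0 \to \fK \to \fT(\fF) \to \rC(\bS_d) \to 0$, the projection $\fs$ corresponds to $(0,\chi_{\{\zeta\}})$, where $\chi_{\{\zeta\}}$ denotes the characteristic function of the singleton $\{\zeta\}$.

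The key step is then to prove $\fs \in \rA(\fF)^{\perp\perp}$, for which the plan is to exhibit a bounded net in $\rA(\fF)$ converging to $\fs$ in the weak-$*$ topology of $\fT(\fF)^{**}$. A natural candidate is the sequence of iterated powers $M_{q_\zeta}^n$ of the classical peaking polynomial $q_\zeta(z) = (1 + \langle z, \zeta\rangle)/2$: on the $\rC(\bS_d)^{**}$ component, $q_\zeta^n|_{\bS_d}$ tends to $\chi_{\{\zeta\}}$ in weak-$*$ by dominated convergence, while on the $B(\fF)$ component the pointwise decay of $q_\zeta^n$ on $\bB_d$ combined with the reproducing kernel structure of $\fF$ should drive $M_{q_\zeta}^n$ to zero ultraweakly. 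The main technical obstacle I anticipate is verifying norm-boundedness of the approximating family (or, if necessary, passing to a suitable replacement net drawn from the matrix-valued realization theory for complete Pick kernels) across the full range of regular, unitarily invariant, complete Pick spaces, particularly those such as the Dirichlet space where $\|M_{q_\zeta}\|$ strictly exceeds $1$.
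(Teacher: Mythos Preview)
Your treatment of the identity representation via Lemma \ref{L:Kbdrypeak} matches the paper's. For the characters $\chi_\zeta\circ q$, however, your proposal stops at precisely the point where the real work lies: the boundedness of the approximating family. This is not a minor technicality. For a general regular unitarily invariant complete Pick space, the polynomial $q_\zeta(z)=(1+\langle z,\zeta\rangle)/2$ has $\|M_{q_\zeta}\|>1$ whenever $\fF$ is not the Hardy space, so the powers $M_{q_\zeta}^n$ are unbounded and the weak-$*$ limit argument collapses. You correctly flag this, but you do not resolve it, and there is no soft workaround: producing a \emph{contractive} multiplier that peaks at a prescribed $\zeta\in\bS_d$ is exactly where the complete Pick hypothesis earns its keep. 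The needed input is \cite[Theorem 8.1 and Proposition 9.2]{DH2020}, which supplies a function $a$ with $\|M_a\|\leq 1$, $a(\zeta)=1$, and $|a(w)|<1$ for $w\in\bS_d\setminus\{\zeta\}$.

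Once that ingredient is in hand, your route through Theorem \ref{T:locpeak} and Theorem \ref{T:ncGlick} does succeed: the sequence $(M_a^n)$ is now bounded, converges to $0$ in the $B(\fF)$ summand (bounded pointwise decay on $\bB_d$ is weak-$*$ convergence in $B(\fF)$), and converges to $\chi_{\{\zeta\}}$ in the $\rC(\bS_d)^{**}$ summand, so $\fs_{\chi_\zeta\circ q}\in\rA(\fF)^{\perp\perp}$. The paper, however, bypasses the support-projection machinery entirely. It sets $b=\sum_{n\geq 1}2^{-n}a^n$, observes that $M_b$ is a contraction in $\rA(\fF)$ with $b(\zeta)=1$ and $|b|<1$ elsewhere on $\bS_d$, and uses $M_a^n\to 0$ weak-$*$ to conclude $\|P_F M_b|_F\|<1$ for every finite-dimensional $F\subset\fF$; together with Lemma \ref{L:repTF}(i) this verifies the local $\rA(\fF)$-peak condition directly. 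Both arguments hinge on the same contractive peaking multiplier from \cite{DH2020}; the paper's is simply shorter, while yours, once completed, gives the stronger intermediate conclusion that the support projection is an $\rA(\fF)$-peak projection.
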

\begin{proof}
If the kernel of $\fF$ is bounded, then by Lemmas \ref{L:Kbdrypeak} and \ref{L:repTF} we see that $\rA(\fF)$ indeed has the Bishop property inside of $\fT(\fF)$. 
Thus, for the remainder of the proof, we assume that the kernel of $\fF$ is unbounded.

If $\fF$ is the Hardy space on the unit disc, then it is well known that $\rA(\fF)$ is identified with  the disc algebra, and the boundary representations are the characters $\chi_\zeta\circ q$ for each $\zeta\in \bS_d$. 

If $\fF$ is not the Hardy space on the unit disc, then by Lemma \ref{L:repTF}, the boundary representations for $\rA(\fF)$ are precisely the identity representation, which is a local $\rA(\fF)$-peak representation by Lemma \ref{L:Kbdrypeak}, and the characters $\chi_\zeta\circ q$ for each $\zeta\in \bS_d$. 

Thus, in either case, it suffices to fix $\zeta\in \bS_d$ and to show that  $\pi=\chi_\zeta\circ q$ is a local $\A(\fF$)-peak representation.  

To see this, apply \cite[Theorem 8.1 and Proposition 9.2]{DH2020} to find a function $a:\ol{\bB_d}\to\bC$ such that $M_a$ is a contraction in $\rA(\fF)$ with the property that $a(\zeta)=1$ and $|a(w)|<1$ for every $w\in \bS_d, w\neq \zeta$. Based on an argument used in \cite[Proposition 6.4]{CD2016duality}, we define $b=\sum_{n=1}^\infty 2^{-n} a^n$. Clearly, $M_b$ is a contractive element of $\rA(\fF)$, and we still have $b(\zeta)=1$ and $|b(w)|<1$ for every $w\in \bS_d, w\neq \zeta$. Since bounded pointwise convergence to $0$ on $\bB_d$ coincides with weak-$*$ convergence to $0$ in $B(\fF)$ (see for instance \cite[Section 2.1]{DH2020}), it follows from the maximum modulus principle that the sequence $(M_a^n)$ converges to $0$ in the weak-$*$ topology of $B(\fF)$. In particular, if $F\subset \fF$ is a finite-dimensional subspace, then $(P_F M_a^n |_F)$ converges to $0$ in norm, whence $\|P_F M_b|_F\|<1$.  By Lemma \ref{L:repTF}, we conclude that $\pi$ is a local $\rA(\fF)$-peak representation.
\end{proof}

A few remarks are in order. First, \cite[Example 2]{CTh2020fdim} illustrates that the peaking behaviour in the previous statement cannot be strengthened to ``global'' peaking. Second, we mention that the proof above does not require the full strength of the complete Pick property of the kernel $k$. Indeed, the only instance where this property is used is to invoke \cite[Proposition 9.2]{DH2020}, which in fact only requires the so-called ``$2$-point Pick property".

At present, we have two classes of non self-adjoint operator algebras with the Bishop property: uniform algebras, and the algebras of multipliers considered in Theorem \ref{T:AHBishop}. Our next result is a mechanism to construct additional examples, by means of tensor products. Given two operator algebras, we denote by $\A\otimes \B$ their minimal tensor product; see \cite[Paragraph 2.2.2]{BLM2004}. Furthermore, recall that a unital $\rC^*$-algebra is said to be \emph{liminal} (or \emph{CCR}) if all of its irreducible $*$-representations are finite-dimensional.

\begin{theorem}\label{T:tensorBishop}
For $j=1,2$, let $\A_j\subset B(\H_j)$ be a unital operator algebra with the Bishop property inside of $\rC^*(\A_j)$. Assume  that $\rC^*(\A_1)$ is liminal. Then,  $\A_1\otimes \A_2$  has the Bishop property inside of $\rC^*(\A_1)\otimes \rC^*(\A_2)$.
\end{theorem}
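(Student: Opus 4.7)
The plan is to reduce everything to the factors by exploiting liminality. First, since $\rC^*(\A_1)$ is liminal, hence Type I and nuclear, every irreducible $*$-representation of $\rC^*(\A_1)\otimes \rC^*(\A_2)$ is unitarily equivalent to a tensor product $\sigma_1\otimes \sigma_2$ where each $\sigma_i$ is an irreducible $*$-representation of $\rC^*(\A_i)$; moreover, by Hopenwasser's tensor product theorem (the same result invoked in Example \ref{E:nonBishop}), such a representation is a boundary representation for $\A_1\otimes \A_2$ if and only if each $\sigma_i$ is a boundary representation for $\A_i$. Combining these two facts identifies the Choquet boundary of $\A_1\otimes \A_2$ as the collection of tensor products $\beta_1\otimes \beta_2$ with each $\beta_i$ a boundary representation for $\A_i$.

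Fix such a boundary representation $\beta=\beta_1\otimes \beta_2$. Using the Bishop property of $\A_i$ in $\rC^*(\A_i)$, choose $B_i\in \bM_{n_i}(\A_i)$ with $\|B_i\|=1$ witnessing the local $\A_i$-peak property at $\beta_i$. Let $B=B_1\otimes B_2\in \bM_{n_1n_2}(\A_1\otimes \A_2)$, which has norm $1$ by the definition of the minimal tensor product. I claim $B$ witnesses the local $(\A_1\otimes \A_2)$-peak property at $\beta$. Let $\sigma=\sigma_1\otimes \sigma_2$ be an irreducible $*$-representation of $\rC^*(\A_1)\otimes \rC^*(\A_2)$ not unitarily equivalent to $\beta$, so that either $\sigma_1\not\cong \beta_1$ or $\sigma_2\not\cong \beta_2$. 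The essential input is that $\rC^*(\A_1)$ is liminal, so $\H_{\sigma_1}$ is \emph{finite-dimensional}; this is the crux of the whole argument.

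The verification then splits into two cases. If $\sigma_1\not\cong\beta_1$, then the local peak property applied with $G=\H_{\sigma_1}^{(n_1)}$ yields $\|\sigma_1^{(n_1)}(B_1)\|<1$, whence for every finite-dimensional $G\subset \H_{\sigma_1}^{(n_1)}\otimes \H_{\sigma_2}^{(n_2)}$,
\[
\|P_G \sigma^{(n_1n_2)}(B)|_G\|\leq \|\sigma_1^{(n_1)}(B_1)\|\cdot \|\sigma_2^{(n_2)}(B_2)\|<1.
\]
If instead $\sigma_1\cong \beta_1$ but $\sigma_2\not\cong\beta_2$, the finite-dimensionality of $\H_{\sigma_1}^{(n_1)}$ allows one to enlarge any finite-dimensional $G$ to a subspace of the form $\H_{\sigma_1}^{(n_1)}\otimes G_2$ with $G_2\subset \H_{\sigma_2}^{(n_2)}$ finite-dimensional; using that $P_{\H_{\sigma_1}^{(n_1)}\otimes G_2}=I\otimes P_{G_2}$, one obtains
\[
\|P_G \sigma^{(n_1n_2)}(B)|_G\|\leq \|\sigma_1^{(n_1)}(B_1)\|\cdot \|P_{G_2}\sigma_2^{(n_2)}(B_2)|_{G_2}\|<1
\]
by the Bishop property of $\A_2$. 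The main obstacle is this second case, where one must carefully manage the non-product character of $G$ and rely decisively on the finite-dimensionality of $\H_{\sigma_1}$ to pass from an arbitrary $G$ to a product subspace without losing control of the norm; the first case and the identification of boundary representations are comparatively routine.
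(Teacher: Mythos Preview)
Your argument is correct and follows essentially the same route as the paper: identify the irreducible and boundary representations of the tensor product via liminality and Hopenwasser's theorem, take $B_1\otimes B_2$ as the peaking element, and use finite-dimensionality of $\H_{\sigma_1}$ to reduce an arbitrary finite-dimensional subspace to one of product form. The only cosmetic difference is that the paper treats both cases uniformly by passing to $F_1\otimes F_2$, whereas you split into the two cases $\sigma_1\not\cong\beta_1$ and $\sigma_2\not\cong\beta_2$; also note that the Hopenwasser result needed here is the general tensor-product version \cite{hopenwasser1978}, not the matrix-amplification version \cite{hopenwasser1973} invoked in Example~\ref{E:nonBishop}.
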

\begin{proof}
First, since $\rC^*(\A_1)$ is liminal,  all irreducible $*$-representations of $\rC^*(\A_1)\otimes \rC^*(\A_2)$ are unitarily equivalent to one of the form $\pi_1\otimes \pi_2$, where $\pi_1$ and $\pi_2$ are irreducible $*$-representations of $\rC^*(\A_1)$ and $\rC^*(\A_2)$ respectively \cite[Lemma 10]{wulfsohn1963}.
Furthermore, it follows from  \cite{hopenwasser1978} that $\pi_1\otimes \pi_2$ is a boundary representation for $\A_1\otimes \A_2$ if and only if $\pi_1$ and $\pi_2$ are boundary representations for $\A_1$ and $\A_2$ respectively. Thus, we fix $\pi_1$ and $\pi_2$ boundary representations for $\A_1$ and $\A_2$ respectively, and we aim to show that $\pi_1\otimes \pi_2$ is a local $(\A_1\otimes \A_2)$-peak representation. 

For each $j=1,2$, the algebra $\A_j$ has the Bishop property inside of $\rC^*(\A_j)$, so there is $B_j\in \bM_{n_j}(\A_j)$ with $\|B_j\|=1$ such that
\[
\|P_{F}\rho^{(n_j)}(B_j)|_F\|<1
\]
for every irreducible $*$-representation $\rho:\rC^*(\A_j)\to B(\H_\rho)$ unitarily inequivalent to $\pi_j$ and every finite-dimensional subspace $F\subset \H^{(n_j)}_\rho$. Clearly, we see that $B_1\otimes B_2\in \bM_{n_1n_2}(\A_1\otimes \A_2)$ and $\|B_1\otimes B_2\|=1$.
It thus suffices to fix, for each $j=1,2,$  an irreducible $*$-representation $\sigma_j:\rC^*(\A_j)\to B(\K_j)$ such that $\sigma_1\otimes \sigma_2$ is unitarily inequivalent to $\pi_1\otimes \pi_2$, and to show that
\[
\|P_{F} (\sigma_1\otimes \sigma_2)^{(n_1n_2)}(B_1\otimes B_2)|_F\|<1
\]
for every finite-dimensional subspace $F\subset (\K_1\otimes \K_2)^{(n_1n_2)}\cong \K_1^{(n_1)}\otimes \K_2^{(n_2)}$. Using once again that $\rC^*(\A_1)$ is liminal, we see that $\K_1$ is finite-dimensional, and an elementary argument then reveals that it is sufficient to assume that $F=F_1\otimes F_2$ for some finite-dimensional subspaces $F_1\subset \K_1^{(n_1)}$ and $F_2\subset \K_2^{(n_2)}$.  
Since $\sigma_1\otimes \sigma_2$ is unitarily inequivalent to $\pi_1\otimes \pi_2$, either $\sigma_1$ is unitarily inequivalent to $\pi_1$, or $\sigma_2$ is unitarily inequivalent to $\pi_2$, so that  
\[
\|P_{F_1}\sigma_1^{(n_1)}(B_1)|_{F_1}\|  \|P_{F_2}\sigma_2^{(n_2)}(B_2)|_{F_2}\|
<1.
\]
Then,
\begin{align*}
&\|P_{F_1\otimes F_2}(\sigma_1\otimes \sigma_2)^{(n_1n_2)}(B_1\otimes B_2)|_{F_1\otimes F_2}\|\\
&= \|P_{F_1}\sigma_1^{(n_1)}(B_1)|_{F_1}\|  \|P_{F_2}\sigma_2^{(n_2)}(B_2)|_{F_2}\|\\
&<1.
\end{align*}
as desired.
 \end{proof}

A crucial part of the previous argument is the fact that any irreducible $*$-representation of $\rC^*(\A_1)\otimes \rC^*(\A_2)$ is unitarily equivalent to a tensor product of irreducible $*$-representations. By  \cite[Lemma 10]{wulfsohn1963}, this actually holds as long as one the algebras $\rC^*(\A_1), \rC^*(\A_2)$ is postliminal (or GCR) -- a weaker condition than liminality. However, in this more general context the rest of the proof does not carry over unaltered, as it is not clear then whether it suffices to consider finite-dimensional subspaces of the form $F_1\otimes F_2$.

Additionally, we mention that it is possible for $\rC^*(\A)$ to be liminal, while $\A$ does not have the Bishop property inside $\rC^*(\A)$ (Example \ref{E:nonBishop}). 

Using the previous theorem, we can now give more examples of operator algebras with the Bishop property.

\begin{corollary}\label{C:tensorBishop}
Let $X$ be a compact Hausdorff space and let $\fF$ be a regular, unitarily invariant, complete Pick space on the unit ball.  Let $\B\subset \rC(X)$ be a uniform algebra. Then, $\rA(\fF)\otimes \B$ has the Bishop property inside of $\fT(\fF)\otimes \rC(X)$.\end{corollary}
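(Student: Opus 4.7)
My plan is to derive the corollary as a direct application of Theorem \ref{T:tensorBishop}, by taking $\A_1 = \B$ and $\A_2 = \rA(\fF)$. Since $\B$ is a point-separating unital closed subalgebra of $\rC(X)$, an application of the Stone--Weierstrass theorem to $\B + \B^*$ yields $\rC^*(\B) = \rC(X)$. Because $\rC(X)$ is commutative, every irreducible $*$-representation of it is one-dimensional, namely evaluation at a point, so $\rC(X)$ is liminal---this is precisely the hypothesis needed by Theorem \ref{T:tensorBishop}. Once the theorem is applied, the conclusion reads that $\B \otimes \rA(\fF)$ has the Bishop property inside $\rC(X) \otimes \fT(\fF)$, which is equivalent to the stated form via the canonical flip isomorphism of the minimal tensor product.

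It remains to verify that each factor individually has the Bishop property inside its own generated $\rC^*$-algebra. For $\rA(\fF)$ inside $\fT(\fF)$, this is exactly Theorem \ref{T:AHBishop}. For $\B$ inside $\rC(X) = \rC^*(\B)$, the irreducible $*$-representations of $\rC(X)$ are the characters $\chi_y$ for $y \in X$, and $\chi_x$ is a boundary representation for $\B$ precisely when $x$ belongs to the classical Choquet boundary of $\B$. The classical theorem of Bishop recalled in Section \ref{S:intro} then supplies, for each such $x$, a function $f \in \B$ with $f(x) = 1$ and $|f(y)| < 1$ for all $y \neq x$. Taking $n = 1$ and $B = f$ in the definition of a local $\A$-peak representation shows that $\chi_x$ is a local $\B$-peak representation: for any character $\chi_y$ with $y \neq x$, we have $\|P_G \chi_y(f)|_G\| = |f(y)| < 1$ on the only nontrivial finite-dimensional subspace $G \subset \bC$.

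This proof is essentially an assembly job, and I anticipate no real obstacle since the technical work has all been absorbed into Theorem \ref{T:tensorBishop}, Theorem \ref{T:AHBishop}, and the classical Bishop theorem. The one delicate point worth flagging is the implicit metrizability assumption built into the standard formulation of Bishop's theorem, which is what guarantees that Choquet boundary points are honest peak points (and not merely generalized peak points, as can happen for non-metric compact Hausdorff spaces); under this standing hypothesis, the whole argument reduces to matching the data to Theorem \ref{T:tensorBishop} and swapping tensor factors.
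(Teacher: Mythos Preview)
Your proof is correct and follows the same approach as the paper: both invoke Theorem \ref{T:AHBishop} for $\rA(\fF)$, the classical Bishop theorem for $\B$, and then apply Theorem \ref{T:tensorBishop} using that $\rC(X)$ is liminal. Your observation about the metrizability hypothesis is well taken---Bishop's theorem as stated in Section \ref{S:intro} assumes $X$ is a compact \emph{metric} space, so the corollary as written for general compact Hausdorff $X$ tacitly relies on this standing assumption (or else on Choquet boundary points being genuine peak points rather than merely generalized peak points).
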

\begin{proof}
Recall that $\B$ has the Bishop property in $\rC(X)$ by Bishop's theorem. Likewise,  $\rA(\fF)$ has the Bishop property in $\fT(\fF)$ by Theorem \ref{T:AHBishop}. Therefore, the desired conclusion follows from Theorem \ref{T:tensorBishop}.
\end{proof}

\subsection{Minimality of the Choquet boundary}\label{SS:min}

Recall that if $X$ is a compact metric space and $\A\subset \rC(X)$ is a uniform algebra, then the Choquet boundary of $\A$ coincides with its set of peak points, by Bishop's theorem. As a consequence, the Choquet boundary is the smallest boundary for $\A$. Our purpose in this subsection is to determine to what extent a similar fact holds in the non-commutative context. 

Let $\A\subset B(\H)$ be a unital operator algebra. A set $\Delta$ of irreducible $*$-repre\-sentations of $\rC^*(\A)$ is a \emph{boundary} for $\A$ if for every integer $n\geq 1$ and for every $B\in \bM_n(\A)$ there is an element of $\Delta$, say $\pi:\rC^*(\A)\to B(\H_\pi)$, and a finite-dimensional subspace $F\subset \H_\pi^{(n)}$ such that 
\[
\|P_F \pi^{(n)}(B)|_F\|=\|B\|.
\]
It was observed in  \cite[Theorem 2.10]{CTh2020fdim} that any unital operator algebra admits a boundary consisting of boundary representations. As we will see shortly, this is more or less the generic situation.

We first record an elementary observation.

\begin{lemma}\label{L:peakandbdry}
Let $\A\subset B(\H)$ be a unital operator algebra and let $\pi$ be an irreducible $*$-representation of $\rC^*(\A)$. Then, the following statements are equivalent.
\begin{enumerate}[{\rm (i)}]
\item $\pi$ is a local $\A$-peak representation.
\item If $\Delta$ is a boundary for $\A$, then $\pi$ is unitarily equivalent to some element of $\Delta$.
\end{enumerate}
\end{lemma}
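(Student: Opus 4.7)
The plan is to prove the two implications separately, using the characterization of local $\A$-peak representations from Lemma \ref{L:locpeakrep} together with the definition of a boundary.

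For (i) $\Rightarrow$ (ii), I would start by assuming that $\pi$ is a local $\A$-peak representation and choose $n \geq 1$ and $B \in \bM_n(\A)$ with $\|B\|=1$ witnessing the peaking property. Lemma \ref{L:locpeakrep} then provides a finite-dimensional subspace $F \subset \H_\pi^{(n)}$ with $\|P_F \pi^{(n)}(B)|_F\| = 1$, so $\|B\| = 1$ is actually attained through a compression on $\H_\pi^{(n)}$. Given any boundary $\Delta$ for $\A$, the defining property produces $\pi' \in \Delta$ and a finite-dimensional subspace $F' \subset \H_{\pi'}^{(n)}$ with $\|P_{F'} {\pi'}^{(n)}(B)|_{F'}\| = 1$. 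If $\pi'$ were not unitarily equivalent to $\pi$, the peaking property of $B$ would force $\|P_{F'} {\pi'}^{(n)}(B)|_{F'}\| < 1$, a contradiction.

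For (ii) $\Rightarrow$ (i), I would argue by contrapositive. Suppose $\pi$ is not a local $\A$-peak representation. Let $\Delta$ be the collection of all irreducible $*$-representations of $\rC^*(\A)$ that are not unitarily equivalent to $\pi$. The goal is to check that $\Delta$ is a boundary for $\A$, which immediately negates (ii). Fix $n \geq 1$ and $B \in \bM_n(\A)$; rescaling, assume $\|B\| = 1$. By the negation of the peaking property applied to this specific $B$, there is some irreducible $\sigma$ not unitarily equivalent to $\pi$ (hence $\sigma \in \Delta$) and a finite-dimensional $G \subset \H_\sigma^{(n)}$ with $\|P_G \sigma^{(n)}(B)|_G\| \geq 1$. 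Since the reverse inequality $\|P_G \sigma^{(n)}(B)|_G\| \leq \|B\| = 1$ always holds, equality is achieved, which is exactly what is required to verify that $\Delta$ is a boundary.

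Neither step looks genuinely difficult: both directions are essentially a translation between the quantifier structure in the definition of a local $\A$-peak representation and the quantifier structure in the definition of a boundary. The only minor subtlety, and the one worth being careful about, is making sure one applies Lemma \ref{L:locpeakrep} to pass from the strict-inequality form of the peaking condition to an actual finite-dimensional attainment of the norm on $\H_\pi^{(n)}$ in the first direction, and correctly negating the existential quantifiers in the second direction so that the witness furnished is tailored to the given $B$.
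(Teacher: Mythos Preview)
Your proposal is correct and follows essentially the same route as the paper: both directions amount to matching the quantifier structure in the definition of a local $\A$-peak representation against that of a boundary, and your contrapositive for (ii) $\Rightarrow$ (i) is logically the same as the paper's direct argument that removing (the class of) $\pi$ from a full set of irreducibles cannot yield a boundary. The appeal to Lemma \ref{L:locpeakrep} in (i) $\Rightarrow$ (ii) is harmless but not actually needed, since the boundary $\Delta$ already supplies the required norm-attaining compression.
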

\begin{proof}
(i) $\Rightarrow$ (ii): There is $n\in \bN$ and $B\in \bM_n(\A)$ with $\|B\|=1$ such that, for every irreducible $*$-representation $\sigma:\rC^*(\A)\to B(\H_\sigma)$ unitarily inequivalent to $\pi$ and every finite-dimensional subspace $F\subset \H^{(n)}_\sigma$, we have that
\[
\|P_{F}\sigma^{(n)}(A)|_{F}\|<1.
\]
Now, let $\Delta$ be a boundary for $\A$. There is an element $\delta:\rC^*(\A)\to B(\H_\delta)$ in $\Delta$ along with a finite-dimensional subspace $G\subset \H^{(n)}_\delta$ such that \[\|P_{G}\delta^{(n)}(A)|_{G}\|=1.\] Clearly, this forces $\delta$ to be unitarily equivalent to $\pi$, as desired.

(ii) $\Rightarrow$ (i):  Let $\Delta$ be a set of irreducible $*$-representations of $\rC^*(\A)$ such that every irreducible $*$-representation is unitarily equivalent to an element of $\Delta$. Clearly, $\Delta$ is a boundary for $\rC^*(\A)$, and hence for $\A$. Let $\Delta_\pi\subset \Delta$ be the subset consisting of those elements unitarily inequivalent to $\pi$. By assumption, we see that $\Delta\setminus \Delta_\pi$ is not a boundary for $\A$, so there is  $n\geq 1$ and $B\in \bM_n(\A)$ such that for every irreducible $*$-representation $\sigma: \rC^*(\A)\to B(\H_\sigma)$  unitarily inequivalent to $\pi$ and every finite-dimensional subspace $F\subset \H_\sigma^{(n)}$, we have $\|B\|>\|P_F \sigma^{(n)}(B)|_F\|$. In other words, $\pi$ is a local $\A$-peak representation. 
\end{proof}

As a consequence, we show that for separable $\rC^*$-algebras, the notion of boundary is trivial.

\begin{corollary}\label{C:minbdryC*}
Let $\Delta$ be a set of irreducible $*$-representations of some separable unital $\rC^*$-algebra $\fT$. Then, $\Delta$ is a boundary for $\fT$ if and only if every irreducible $*$-representation of $\fT$ is unitarily equivalent to an element of $\Delta$.
\end{corollary}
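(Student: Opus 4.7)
The plan is to deduce this corollary directly by combining Corollary \ref{C:C*peak} with Lemma \ref{L:peakandbdry}, viewing the separable unital $\rC^*$-algebra $\fT$ as an operator algebra inside itself, so that $\rC^*(\fT) = \fT$ and the notions of ``local $\fT$-peak representation'' and ``boundary for $\fT$'' both make sense.

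For the forward implication, I would fix an arbitrary irreducible $*$-representation $\pi$ of $\fT$. By Corollary \ref{C:C*peak}, $\pi$ is a local $\fT$-peak representation. Applying the implication (i) $\Rightarrow$ (ii) of Lemma \ref{L:peakandbdry} to $\A = \fT$ then immediately yields that $\pi$ must be unitarily equivalent to some element of the boundary $\Delta$.

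For the reverse implication, I need to verify that any set $\Delta$ meeting every unitary equivalence class of irreducible $*$-representations of $\fT$ is itself a boundary — a fact that the proof of Lemma \ref{L:peakandbdry} already asserts as ``clear''. Given $n \geq 1$ and $B \in \bM_n(\fT)$, the set of states $\phi$ on $\bM_n(\fT)$ satisfying $\phi(B^*B) = \|B\|^2$ is a non-empty weak-$*$ closed face of the state space, so by the Krein--Milman theorem it contains a pure state $\phi_0$. Its GNS representation is irreducible, and every irreducible $*$-representation of $\bM_n(\fT)$ arises as an amplification $\pi^{(n)}$ for some irreducible $*$-representation $\pi$ of $\fT$. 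Taking $\xi$ to be the distinguished cyclic vector of the GNS representation and setting $F = \spn\{\xi, \pi^{(n)}(B)\xi\} \subset \H_\pi^{(n)}$ gives
\[
\|P_F \pi^{(n)}(B)|_F\| \geq \|\pi^{(n)}(B)\xi\| = \phi_0(B^*B)^{1/2} = \|B\|,
\]
and up to unitary equivalence $\pi$ may be chosen in $\Delta$ by hypothesis.

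There is no real obstacle here — the substantive content is entirely encoded in Corollary \ref{C:C*peak} (which itself relies on Lemma \ref{L:purestateopen} and Theorem \ref{T:peakdiff}) together with Lemma \ref{L:peakandbdry}. The corollary is essentially a clean packaging statement asserting that, for separable unital $\rC^*$-algebras, the notion of boundary collapses to the trivial one of hitting every unitary equivalence class.
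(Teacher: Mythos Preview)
Your proof is correct and follows essentially the same approach as the paper: the forward implication is identical (Corollary~\ref{C:C*peak} plus Lemma~\ref{L:peakandbdry}), and for the converse the paper simply writes ``The converse is elementary,'' while you spell out the standard pure-state/GNS argument that underlies this.
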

\begin{proof}
Assume first that $\Delta$ is a boundary and that $\pi$ is an irreducible $*$-representation of $\fT$. Then, $\pi$ is a  local $\fT$-peak representation by Corollary \ref{C:C*peak}. Therefore, Lemma \ref{L:peakandbdry} implies that $\pi$ is unitarily equivalent to an element of $\Delta$. 
The converse is elementary. 
\end{proof}

Going back to the setting of a general unital operator algebra, we characterize the Bishop property by a certain minimality condition on the Choquet boundary.

\begin{theorem}\label{T:minbdry}
Let $\A\subset B(\H)$ be a unital operator algebra. Then, the following statements are equivalent.
\begin{enumerate}[{\rm (i)}]
\item The algebra $\A$ has the Bishop property inside of $\rC^*(\A)$.
\item A set $\Delta$ of irreducible $*$-representations of $\rC^*(\A)$ is a boundary for $\A$ if and only if any boundary representation for $\A$ is unitarily equivalent to some element of $\Delta$.
\end{enumerate}
\end{theorem}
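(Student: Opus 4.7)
The plan is to reduce everything to Lemma \ref{L:peakandbdry}, which already encapsulates the exact correspondence between local peak representations and membership in arbitrary boundaries. Once that lemma is in hand, the equivalence in Theorem \ref{T:minbdry} becomes a bookkeeping exercise combining it with \cite[Theorem 2.10]{CTh2020fdim}, which provides the existence of a boundary consisting of boundary representations.

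For (i) $\Rightarrow$ (ii), I would prove both directions of the biconditional in (ii). For the forward direction, given a boundary $\Delta$ for $\A$ and a boundary representation $\beta$, the Bishop property shows that $\beta$ is a local $\A$-peak representation, and then Lemma \ref{L:peakandbdry} forces $\beta$ to be unitarily equivalent to an element of $\Delta$. For the reverse direction, assume every boundary representation for $\A$ is unitarily equivalent to an element of $\Delta$. By \cite[Theorem 2.10]{CTh2020fdim}, the collection of all boundary representations is itself a boundary for $\A$. Since the boundary property is clearly invariant under replacing a representation by a unitarily equivalent one (transport the finite-dimensional subspace through the intertwining unitary), $\Delta$ inherits the boundary property.

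For (ii) $\Rightarrow$ (i), let $\beta$ be any boundary representation for $\A$ and consider the set $\Delta$ obtained by selecting one representative from every unitary equivalence class of irreducible $*$-representations of $\rC^*(\A)$ \emph{except} the class of $\beta$. By construction, $\beta$ is unitarily equivalent to no member of $\Delta$, so (ii) implies that $\Delta$ fails to be a boundary for $\A$. Hence there exist $n \geq 1$ and $B \in \bM_n(\A)$ (with $\|B\| > 0$, hence renormalizable to $\|B\|=1$) such that $\|P_F\sigma^{(n)}(B)|_F\| < 1$ for every $\sigma \in \Delta$ and every finite-dimensional $F \subset \H_\sigma^{(n)}$. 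Since this inequality is preserved under unitary equivalence, it holds for every irreducible $*$-representation $\sigma$ unitarily inequivalent to $\beta$, showing that $\beta$ is a local $\A$-peak representation. (Alternatively, one can skip the explicit construction and simply observe that (ii) says every boundary must contain $\beta$ up to equivalence, and then quote Lemma \ref{L:peakandbdry} directly.)

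I do not anticipate any real obstacle: the substantive work has been packaged into Lemma \ref{L:peakandbdry} (for the peak-representation/boundary dichotomy) and into \cite[Theorem 2.10]{CTh2020fdim} (for the existence of a boundary of boundary representations), so the remaining argument is essentially formal.
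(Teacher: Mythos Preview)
Your proposal is correct and follows essentially the same route as the paper: both directions are reduced to Lemma~\ref{L:peakandbdry} together with \cite[Theorem 2.10]{CTh2020fdim}. Your explicit construction of $\Delta$ in (ii)$\Rightarrow$(i) simply unpacks the implication (ii)$\Rightarrow$(i) of Lemma~\ref{L:peakandbdry}, and your parenthetical alternative is exactly what the paper does.
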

\begin{proof}
(i) $\Rightarrow$ (ii):  A set $\Delta$ for which any boundary representation for $\A$ is unitarily equivalent to one of its elements  must be a boundary for $\A$, by virtue of \cite[Theorem 2.10]{CTh2020fdim}. Conversely, assume that $\Delta$ is a boundary for $\A$. Because $\A$ has the Bishop property inside of $\rC^*(\A)$,  every boundary representation is a local $\A$-peak representation, and hence is unitarily equivalent to an element of $\Delta$ by Lemma \ref{L:peakandbdry}. 

(ii) $\Rightarrow$ (i):  Let $\beta$ be a boundary representation for $\A$. By assumption, any boundary for $\A$ must contain an element unitarily equivalent to $\beta$. Lemma \ref{L:peakandbdry} then implies that $\beta$ is a local $\A$-peak representation.
\end{proof}

Using our earlier results on the Bishop property, we can now give examples of operators algebras that are not uniform algebras, and for which the Choquet boundary is a minimal boundary, thereby addressing our main motivating question.

\begin{corollary}\label{C:AHmin}
Let $\fF$ be a regular, unitarily invariant, complete Pick space on the unit ball. Let $\Delta$ be a set of irreducible $*$-representations of $\fT(\fF)$. Then, $\Delta$ is a boundary for $\rA(\fF)$ if and only if every boundary representation for $\rA(\fF)$ is unitarily equivalent to some element of $\Delta$. 
\end{corollary}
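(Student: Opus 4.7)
The plan is to obtain this corollary as an immediate consequence of two previously established results, with essentially no additional work required. By Theorem \ref{T:AHBishop}, the algebra $\rA(\fF)$ has the Bishop property inside $\fT(\fF) = \rC^*(\rA(\fF))$. Thus condition (i) of Theorem \ref{T:minbdry} is satisfied with $\A = \rA(\fF)$.

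Applying the equivalence (i) $\Leftrightarrow$ (ii) of Theorem \ref{T:minbdry}, we conclude that a set $\Delta$ of irreducible $*$-representations of $\fT(\fF)$ is a boundary for $\rA(\fF)$ precisely when every boundary representation for $\rA(\fF)$ is unitarily equivalent to some element of $\Delta$. This is exactly the statement of the corollary.

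There is essentially no obstacle to overcome here, since all of the substantive work has been done in Theorem \ref{T:AHBishop} (verifying the Bishop property using the peaking functions from \cite{DH2020} together with Lemma \ref{L:Kbdrypeak}) and in Theorem \ref{T:minbdry} (characterizing the Bishop property as minimality of the Choquet boundary via Lemma \ref{L:peakandbdry} and \cite[Theorem 2.10]{CTh2020fdim}). One could, if desired, explicitly describe the boundary representations for $\rA(\fF)$ as identified in Lemma \ref{L:repTF}, so as to make the statement more concrete, but this is not necessary for the proof itself.
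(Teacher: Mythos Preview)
Your proposal is correct and follows exactly the same approach as the paper, which simply says to combine Theorems \ref{T:AHBishop} and \ref{T:minbdry}. The extra commentary about what goes into those theorems is accurate but unnecessary for the proof itself.
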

\begin{proof}
Combine Theorems \ref{T:AHBishop} and \ref{T:minbdry}.
\end{proof}

\begin{corollary}\label{C:tensormin}
Let $X$ be a compact Hausdorff space and let $\fF$ be a regular, unitarily invariant, complete Pick space on the unit ball.  Let $\B\subset \rC(X)$ be a uniform algebra. Let $\Delta$ be a set of irreducible $*$-representations of $\fT(\fF)\otimes \rC(X)$. Then, $\Delta$ is a boundary for $\rA(\fF)\otimes \B$ if and only if every boundary representation for $\rA(\fF)\otimes \B$ is unitarily equivalent to some element of $\Delta$. 
\end{corollary}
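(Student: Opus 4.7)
The plan is to observe that this corollary is a direct consequence of two results established earlier in the paper, in complete analogy with how Corollary \ref{C:AHmin} is obtained. The key point is that Theorem \ref{T:minbdry} reduces the characterization of minimal boundaries to verifying the Bishop property, and the Bishop property for $\rA(\fF)\otimes \B$ inside of $\fT(\fF)\otimes \rC(X)$ is precisely the content of Corollary \ref{C:tensorBishop}.

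More concretely, first I would invoke Corollary \ref{C:tensorBishop} to conclude that $\rA(\fF)\otimes \B$ has the Bishop property inside of $\fT(\fF)\otimes \rC(X)$. Note that the hypotheses of that corollary match exactly those of the present statement: $\fF$ is a regular, unitarily invariant, complete Pick space on the unit ball, and $\B\subset \rC(X)$ is a uniform algebra. Second, I would apply the equivalence (i) $\Leftrightarrow$ (ii) of Theorem \ref{T:minbdry} to $\A = \rA(\fF)\otimes \B$, viewed as a unital operator algebra with $\rC^*(\A) = \fT(\fF)\otimes \rC(X)$. The characterization of boundaries then follows immediately.

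There is essentially no obstacle here: the work has already been done in proving Theorems \ref{T:AHBishop} and \ref{T:tensorBishop} (the latter using that $\rC(X)$ is liminal, so the liminality hypothesis of Theorem \ref{T:tensorBishop} applies with $\A_1 = \B$ and $\A_2 = \rA(\fF)$), and in establishing Theorem \ref{T:minbdry}. The proof is therefore a one-line citation: combine Corollary \ref{C:tensorBishop} with Theorem \ref{T:minbdry}.
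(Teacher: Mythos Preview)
Your proposal is correct and matches the paper's proof exactly: the paper's proof is the single line ``Combine Corollary \ref{C:tensorBishop} with Theorem \ref{T:minbdry}.''
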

\begin{proof}
Combine Corollary \ref{C:tensorBishop} with Theorem \ref{T:minbdry}.
\end{proof}

\section{Boundaries for $\rC^*$-algebras}\label{S:bdryC*}

We saw in Corollary \ref{C:minbdryC*} that the previously introduced notion of a boundary is trivial for $\rC^*$-algebras. In this section, we study variations of this notion that carry non-trivial information.

Let $\fT$ be a $\rC^*$-algebra. Recall that the \emph{spectrum} of $\fT$, denoted by $\spec \fT$,  is the set of unitary equivalence classes of irreducible $*$-representations of $\fT$. Henceforth, we denote by $[\pi]$ the unitary equivalence class of an irreducible $*$-representation $\pi$. The natural association $[\pi]\mapsto \ker \pi$ induces a map from $\spec \fT$ onto the primitive ideal space $\Prim \fT$. Pulling back the Jacobson topology on $\Prim \fT$, we obtain a topology on $\spec \fT$ (see \cite[Chapter 3]{dixmier1977} for a detailed account on these topics). In particular, given $[\pi],[\sigma]\in \spec \fT$, we see that $[\pi]\in \ol{\{[\sigma]\}}$ if and only if $\ker \sigma\subset \ker \pi$.

Let $\Delta$ be a set of irreducible $*$-representations. We say that $\Delta$ is a \emph{maximizing boundary} for $\fT$ if for every $t\in \fT$ there exists $\delta\in \Delta$ with $\|\delta(t)\|=\|t\|$. These can be characterized as follows.

\begin{theorem}\label{T:maxbdryC*}
Let $\fT$ be a separable $\rC^*$-algebra. Then, the following statements are equivalent. 
\begin{enumerate}[{\rm (i)}]
\item The set  $\Delta$ is a maximizing boundary for $\fT$.
\item The kernel of any irreducible $*$-representation of $\fT$ contains the kernel of some element of $\Delta$. 
\item We have $\spec \fT=\bigcup_{\delta\in \Delta} \ol{\{[\delta]\}}$.
\end{enumerate}
\end{theorem}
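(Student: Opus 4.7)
The plan is to establish (ii) $\Leftrightarrow$ (iii) at the level of definitions, to derive (ii) $\Rightarrow$ (i) from elementary quotient-norm considerations, and to prove (i) $\Rightarrow$ (ii) by exploiting the peak-state theory developed in Section \ref{S:ncpeak}. Separability enters only in the last implication.

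The equivalence (ii) $\Leftrightarrow$ (iii) reduces to the description of the topology on $\spec \fT$ pulled back from the Jacobson topology on $\Prim \fT$: as recalled at the opening of this section, $[\pi]\in \ol{\{[\delta]\}}$ exactly when $\ker \delta \subset \ker \pi$, so (iii) is a verbatim topological restatement of (ii).

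For (ii) $\Rightarrow$ (i), fix $t \in \fT$ and, using Hahn--Banach together with Krein--Milman applied to the weak-$*$ compact face of the state space consisting of states $\psi$ with $\psi(t^*t)=\|t^*t\|$, obtain a pure state $\omega$ with $\omega(t^*t)=\|t\|^2$. The GNS representation $\pi_\omega$ is irreducible with $\|\pi_\omega(t)\|=\|t\|$. By (ii) there exists $\delta \in \Delta$ with $\ker \delta \subset \ker \pi_\omega$, so the induced surjection $\fT/\ker \delta \twoheadrightarrow \fT/\ker \pi_\omega$ is a contractive $*$-homomorphism; since both $\delta$ and $\pi_\omega$ are isometric on their respective quotients, I get $\|\delta(t)\| = \|t+\ker\delta\| \geq \|t+\ker\pi_\omega\| = \|\pi_\omega(t)\| = \|t\|$, forcing $\|\delta(t)\|=\|t\|$.

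The heart of the proof is (i) $\Rightarrow$ (ii). Given an irreducible $\pi$, choose a pure state $\omega$ with GNS representation unitarily equivalent to $\pi$. Separability lets me invoke Corollary \ref{C:C*peak} and Lemma \ref{L:peakstatepure} to produce a contraction $a\in\fT$ with $\omega(a^*a)=1$ and $\phi(a^*a)<1$ for every state $\phi\neq\omega$. Applying (i) to $t=a^*a$ furnishes $\delta \in \Delta$ with $\|\delta(a^*a)\|=1$; since $\delta(a^*a)$ is a positive contraction of norm one, I select unit vectors $\xi_n \in \H_\delta$ satisfying $\langle \delta(a^*a)\xi_n,\xi_n \rangle \to 1$ and form the vector states $\phi_n(s)=\langle \delta(s)\xi_n,\xi_n\rangle$. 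By weak-$*$ compactness of the state space, some cluster point $\phi$ of $(\phi_n)$ satisfies $\phi(a^*a)=1$, and Lemma \ref{L:peakstatepure} then forces $\phi=\omega$. Each $\phi_n$ annihilates $\ker \delta$, so $\omega$ does as well: for every $s\in\ker\delta$ I find $\omega(s^*s)=\|\pi_\omega(s)\xi_\omega\|^2=0$, and since $\ker\delta$ is a left ideal while $\pi_\omega(\fT)\xi_\omega$ is dense in $\H_\pi$, I conclude $\pi_\omega(s)=0$. Hence $\ker\delta \subset \ker\pi_\omega = \ker\pi$. The main obstacle is precisely this transition from a norm equality to a kernel containment, and the peak-state apparatus available in the separable setting is what allows the weak-$*$ compactness argument to identify the limit state as $\omega$ itself.
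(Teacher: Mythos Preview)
Your treatment of (ii)$\Leftrightarrow$(iii) and (ii)$\Rightarrow$(i) matches the paper's. For (i)$\Rightarrow$(ii), however, you take a genuinely different route. The paper argues by contraposition: assuming no $\ker\delta$ is contained in $\ker\pi$, it uses Kadison's transitivity theorem to produce, for each $\delta\in\Delta$, a contraction $b_\delta\in\ker\delta$ with $\pi(b_\delta)\xi=\xi$ for a fixed unit vector $\xi\in\H_\pi$; separability of $\fT$ then allows these to be combined into a single element $a=\sum_n 2^{-n}c_n$ (with $\{c_n\}$ a countable dense subset of $\{b_\delta\}$) satisfying $\|a\|=1$ but $\|\delta(a)\|<1$ for every $\delta\in\Delta$, so $\Delta$ is not a maximizing boundary. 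Your approach instead leverages the peak-state machinery from Section~\ref{S:ncpeak}: Corollary~\ref{C:C*peak} furnishes a peaking element for the pure state $\omega$, and a weak-$*$ compactness argument on vector states of $\delta$ forces any maximizing $\delta$ to carry a state equal to $\omega$, whence $\ker\delta\subset\ker\pi$. Your route is more conceptual and ties the result back to the paper's central theme, while the paper's is self-contained and sidesteps one technical wrinkle: Corollary~\ref{C:C*peak} is stated only for \emph{unital} $\fT$, whereas Theorem~\ref{T:maxbdryC*} carries no unitality hypothesis. This is easily patched by unitization, but as written your argument literally covers only the unital case. One further slip: where you write ``$\ker\delta$ is a left ideal,'' the step $\pi_\omega(s)\pi_\omega(t)\xi_\omega=\pi_\omega(st)\xi_\omega=0$ actually requires $st\in\ker\delta$, i.e.\ the \emph{right} (or two-sided) ideal property.
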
 
 \begin{proof}
 (ii) $\Leftrightarrow$ (iii): This follows immediately from the definition of the topology on $\spec \fT$.
 
 (ii) $\Rightarrow$ (i): Let $t\in \fT$. There always exists an irreducible $*$-representation $\pi$ of $\fT$ such that $\|\pi(t)\|=\|t\|$. By assumption, there is $\delta\in \Delta$ such that $\ker \delta\subset \ker \pi$, whence $\|\delta(t)\|\geq \|\pi(t)\|=\|t\|$. This shows that $\Delta$ is indeed a maximizing boundary for $\fT$.
 
  (i) $\Rightarrow$ (ii):  Let $\pi:\fT\to B(\H_\pi)$ be an irreducible $*$-representation and assume that $\ker \delta$ is not contained in $\ker \pi$ for any $\delta\in \Delta$. For each $\delta\in \Delta$, consider the non-zero $*$-representation $\pi_\delta=\pi|_{\ker \delta}:\ker \delta\to B(\H_\pi)$.  Since $\pi$ is irreducible,  we infer that so is $\pi_\delta$ \cite[Theorem 1.3.4]{arveson1976inv}. Let $\xi\in \H_\pi$ be a unit vector. By Kadison's transitivity theorem \cite[II.6.1.13]{blackadar2006}, there is $b_\delta\in \ker \delta$ with $\|b_\delta\|\leq 1$ such that $\pi_\delta(b_\delta)\xi=\xi$. 
 Since $\fT$ is separable, so is $\{b_\delta:\delta\in \Delta\}$. Correspondingly, let $\{c_n:n\in \bN\}$ be a countable dense subset of $\{b_\delta:\delta\in \Delta\}$ and define $a=\sum_{n=1}^\infty 2^{-n}c_n$. Clearly, $a\in \fT$, $\|a\|\leq 1$
and  $\pi(a)\xi=\xi$ so in fact $\|a\|=1$. 
  Finally, let $\delta\in \Delta$. There is $n_\delta\in \bN$ such that $\|c_{n_\delta}-b_\delta\|<1$ and thus $ \|\delta(c_{n_\delta})\|<1$ since $\delta(b_{\delta})=0$. We find
 \begin{align*}
\|\delta(a)\|&\leq \sum_{n\neq n_\delta}\frac{1}{2^n}+\frac{\|\delta(c_{n_\delta})\|}{2^{n_\delta}}<\sum_{n=1}^\infty \frac{1}{2^n}=1.
 \end{align*}
We conclude that $\Delta$ is not a maximizing boundary for $\fT$.
 \end{proof}

The previous result provides a justification for our choice of terminology. 
Motivated by the classical setting, the notion of a maximizing boundary may appear as a perfectly natural candidate for what we actually called a boundary for an operator algebra in Subsection \ref{SS:min}. However, Theorem \ref{T:maxbdryC*} shows that, even for $\rC^*$-algebras, some spatial norm attainment information must be encoded in order to ensure that a boundary truly ``saturates'' the set of all irreducible $*$-representations.

 As an application of Theorem \ref{T:maxbdryC*}, we can give a different proof of a recent result of Courtney and Shulman \cite[Theorem 4.4]{CS2019}.
 
 \begin{corollary}\label{C:CS}
  Let $\fT$ be a $\rC^*$-algebra. Then, the following statements are equivalent.
  \begin{enumerate}[{\rm (i)}]

\item There is a maximizing boundary for $\fT$ consisting of finite-dimensional $*$-representations of $\fT$.

\item Every irreducible $*$-representation of $\fT$ is finite-dimensional.
\end{enumerate}
 \end{corollary}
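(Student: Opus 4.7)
The plan is a straightforward two-way verification, with Theorem \ref{T:maxbdryC*} furnishing the substantive content.

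For (ii) $\Rightarrow$ (i), I would let $\Delta$ consist of one representative from each unitary equivalence class of irreducible $*$-representations of $\fT$. For any $t \in \fT$, the value $\|t\|^2 = \|t^*t\|$ lies in the spectrum of the positive element $t^*t$, so some pure state $\omega$ attains $\omega(t^*t) = \|t\|^2$; its GNS representation is irreducible, hence unitarily equivalent to some $\delta \in \Delta$, yielding $\|\delta(t)\|^2 \geq \omega(t^*t) = \|t\|^2$. Thus $\Delta$ is a maximizing boundary for $\fT$, and by hypothesis each of its members is finite-dimensional.

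The nontrivial direction is (i) $\Rightarrow$ (ii). Given an irreducible $*$-representation $\pi$ of $\fT$, Theorem \ref{T:maxbdryC*} produces $\delta \in \Delta$ with $\ker\delta \subseteq \ker\pi$. Because $\delta$ is finite-dimensional and irreducible, $\delta(\fT)$ is a $*$-subalgebra of $B(\H_\delta)$ acting irreducibly on a finite-dimensional Hilbert space; the double commutant (or Burnside) theorem then forces $\delta(\fT) = B(\H_\delta) \cong \bM_n$ with $n = \dim \H_\delta$. Hence $\fT/\ker\delta$ is simple, so $\ker\delta$ is a maximal proper ideal, and the inclusion $\ker\delta \subseteq \ker\pi \subsetneq \fT$ collapses to $\ker\pi = \ker\delta$. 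Consequently $\pi$ descends to an injective irreducible $*$-representation of $\bM_n$ on $\H_\pi$. Since every $*$-representation of $\bM_n$ is a direct sum of copies of its unique irreducible representation on $\bC^n$, irreducibility of the descended map leaves exactly one summand, and $\dim \H_\pi = n$.

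The main obstacle I anticipate is the separability hypothesis built into Theorem \ref{T:maxbdryC*} which is not explicit in the corollary statement. In practice one can resolve this either by interpreting the corollary in the separable setting or by passing to a separable $\rC^*$-subalgebra of $\fT$ large enough to capture both $\pi$ and a subfamily of $\Delta$ that still serves as a maximizing boundary; with that technicality settled, the implication (i) $\Rightarrow$ (ii) reduces cleanly to the elementary primitive-ideal manipulation above.
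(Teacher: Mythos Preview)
Your proof is correct and takes essentially the same approach as the paper: both invoke Theorem \ref{T:maxbdryC*} to obtain $\ker\delta \subset \ker\pi$ for some finite-dimensional $\delta \in \Delta$, and then conclude that $\pi(\fT)$, being a quotient of the finite-dimensional algebra $\delta(\fT)$, is finite-dimensional (your extra step identifying $\ker\pi = \ker\delta$ via simplicity of $\bM_n$ is correct but not needed). The separability issue you anticipate is resolved in the paper by citing \cite[Theorem 1]{batty1984}, which permits the reduction to the separable case.
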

 \begin{proof}
(ii) $\Rightarrow$ (i): This is clear.

(i) $\Rightarrow$ (ii): It suffices to deal with the case where $\fT$ is separable, upon applying  \cite[Theorem 1]{batty1984}. Let $\Delta$ be a maximizing boundary for $\fT$ consisting of finite-dimensional $*$-representations. Let $\pi$ be an irreducible $*$-representation of $\fT$. By Theorem \ref{T:maxbdryC*}, there is $\delta\in \Delta$ such that $\ker \delta\subset \ker \pi$. In particular, this shows that $\pi(\fT)$ is a quotient of the finite-dimensional $\rC^*$-algebra $\delta(\fT)$. Consequently, $\pi$ is finite-dimensional.
 \end{proof}
 
The previous argument is different from the original proof of \cite[Theorem 4.4]{CS2019}, which makes use of so-called AF mapping telescopes.

As before, let $\fT$ be a $\rC^*$-algebra and let $\Delta$ be a collection of irreducible $*$-representations of $\fT$. To motivate the next development, we remark here that $\Delta$ is a maximizing boundary for $\fT$ precisely when for every $t\in \fT$ and every irreducible $*$-representation $\pi$ we can find $\delta\in \Delta$ such that 
\[
\|\delta(t)\|\geq \|\pi(t)\|.
\]
We say that $\Delta$ is a \emph{minimizing boundary} for $\fT$ if
for every $t\in \fT$ and every irreducible $*$-representation $\pi$ we can find $\delta\in \Delta$ such that 
\[
\|\delta(t)\|\leq \|\pi(t)\|.
\]
 Minimizing boundaries can be characterized in a manner similar to maximizing ones.

\begin{theorem}\label{T:minbdryC*}
Let $\Delta$ be a collection of irreducible $*$-representations of some separable $\rC^*$-algebra $\fT$.  Then, the following statements are equivalent. 
\begin{enumerate}[\rm (i)]
\item The set $\Delta$ is a minimizing boundary for $\fT$.
\item The kernel of any irreducible $*$-representation of $\fT$ is contained in the kernel of some element of $\Delta$. 
\end{enumerate}
\end{theorem}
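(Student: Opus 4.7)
The plan is to adapt the strategy from the proof of Theorem \ref{T:maxbdryC*}, with inclusions of kernels reversed. The direction (ii) $\Rightarrow$ (i) is elementary, while (i) $\Rightarrow$ (ii) will require a separability-based construction that packages the pointwise witnesses supplied by the negation of (ii) into a single element of $\fT$ witnessing the failure of (i).

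For (ii) $\Rightarrow$ (i), I would fix $t \in \fT$ and an irreducible $*$-representation $\pi$, then use (ii) to obtain $\delta \in \Delta$ with $\ker \pi \subset \ker \delta$. This inclusion induces a well-defined surjective $*$-homomorphism $\fT/\ker \pi \to \fT/\ker \delta$ which, under the identifications $\pi(\fT) \cong \fT/\ker \pi$ and $\delta(\fT) \cong \fT/\ker \delta$, sends $\pi(t)$ to $\delta(t)$. Since $*$-homomorphisms between $\rC^*$-algebras are contractive, this yields $\|\delta(t)\| \leq \|\pi(t)\|$, so $\Delta$ is a minimizing boundary.

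For (i) $\Rightarrow$ (ii), my approach is by contradiction. Suppose there exists an irreducible $*$-representation $\pi$ such that $\ker \pi \not\subset \ker \delta$ for every $\delta \in \Delta$. For each such $\delta$, choose $b_\delta \in \ker \pi$ of norm one with $\alpha_\delta := \|\delta(b_\delta)\| > 0$. Since $\fT$ is separable, the set $\{b_\delta : \delta \in \Delta\}$ admits a countable norm-dense subset $\{c_n : n \in \bN\}$. Define
\[
t = \sum_{n=1}^\infty 2^{-n} c_n^* c_n,
\]
a norm-convergent sum which lies in the closed two-sided ideal $\ker \pi$; in particular $\pi(t) = 0$. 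For each $\delta \in \Delta$, choose $n_\delta$ with $\|c_{n_\delta} - b_\delta\| < \alpha_\delta/2$, so that $\|\delta(c_{n_\delta})\| > \alpha_\delta/2$. Since every summand of $\delta(t) = \sum_{n=1}^\infty 2^{-n} \delta(c_n)^* \delta(c_n)$ is positive, there can be no cancellation and
\[
\delta(t) \geq 2^{-n_\delta}\, \delta(c_{n_\delta})^* \delta(c_{n_\delta}) \neq 0,
\]
whence $\|\delta(t)\| > 0 = \|\pi(t)\|$. As this holds for every $\delta \in \Delta$, the pair $(t, \pi)$ witnesses that $\Delta$ is not a minimizing boundary, contradicting (i).

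The only delicate point is the passage from the (potentially uncountable) pointwise data $\{b_\delta\}$ to a single element of $\ker \pi$ that is detected by every $\delta \in \Delta$. Separability of $\fT$ reduces the problem to countably many approximants, and squaring via $c_n^* c_n$ before summing is what ensures positivity of the series, so that the contribution from the index $n_\delta$ cannot be cancelled by other terms.
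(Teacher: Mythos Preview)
Your proof is correct and follows the same overall strategy as the paper: use separability to extract a countable dense family of witnesses in $\ker\pi$, then sum positive elements so that no cancellation can occur. The difference lies in how the witnesses $b_\delta$ are chosen and how positivity is achieved. The paper invokes the irreducibility of $\delta|_{\ker\pi}$ and Kadison's transitivity theorem to produce a self-adjoint $b_\delta \in \ker\pi$ with $\delta(b_\delta)\eta_\delta = \eta_\delta$ for a chosen unit vector $\eta_\delta$, then squares to obtain positivity before summing. Your argument is more elementary: you simply pick any $b_\delta \in \ker\pi \setminus \ker\delta$ and obtain positivity by forming $c_n^* c_n$ in the sum. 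This avoids Kadison's transitivity and the auxiliary operators $X_\delta$ altogether, at no cost in generality or clarity. Both approaches rely on the same key idea, but yours streamlines the construction.
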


\begin{proof}
{\rm (ii)}$\Rightarrow${\rm (i)}: Fix $t\in \fT$ and an irreducible $*$-representation $\pi$. By assumption, there is $\delta\in\Delta$ such that $ \ker\pi\subset\ker\delta$. In particular, $\lVert\delta(t)\rVert\leq \lVert \pi(t)\rVert$. 

{\rm (i)}$\Rightarrow${\rm (ii)}: Let $\pi:\fT\to B(\H_\pi)$ be an irreducible $*$-representation and let $\xi\in\H_\pi$ be a unit vector. For each $\delta:\fT\to B(\H_\delta)$ in $\Delta$, let $\eta_\delta\in\H_\delta$ be a unit vector. Choose a contractive linear operator $X_\delta:\H_\pi\rightarrow\H_\delta$ satisfying $X_\delta\xi = \eta_\delta.$ Note then that $X_\delta^*\eta_\delta = \xi$ by the Cauchy--Schwarz inequality.

Assume that $\ker\delta$ does not contain $\ker\pi$ for any $\delta\in\Delta$. Then, $\delta\mid_{\ker\pi}$ is an irreducible $*$-representation \cite[Theorem 1.3.4]{arveson1976inv}. By Kadison's transitivity theorem, there is a self-adjoint element $b_\delta\in\ker\pi$ with $\|b_\delta\|\leq 1$ such that $\delta(b_\delta)\eta_\delta = \eta_\delta$.  Upon replacing $b_\delta$ by $b_\delta^2$ if necessary, we may assume that $0\leq b_\delta\leq I$. 
Since $\fT$ is separable, so is $\{ b_\delta:\delta\in\Delta\}$. Correspondingly, let $\{ c_n: n\in\bN\}$ be a countable dense subset of $\{ b_\delta:\delta\in\Delta\}$ and define $a = \sum_{n=1}^\infty 2^{-n} c_n.$ Clearly, we have that $a\in \ker\pi$. 

We now fix $\delta\in \Delta$ and claim that $\delta(a)\neq 0$. To see this, observe first hat \[ X_\delta^*\delta(b_\delta)X_\delta\xi = \xi\]and so $\lVert X_\delta^*\delta(b_\delta)X_\delta\rVert=1$. Let $0<\varepsilon<1$. Then, there is $n_\delta\in\bN$ such that $\lVert b_\delta- c_{n_\delta}\rVert<\varepsilon$. Therefore, \[ \lVert X_\delta^*\delta(c_{n_\delta})X_\delta\xi\rVert \geq \lVert X_\delta^*\delta(b_\delta)X_\delta\xi\rVert -\eps= 1-\varepsilon.\]
Since each $c_n$ is a positive element, we see that $a\geq 2^{-m} c_m$ for every $m\geq 1$. Then,
\[
X_\delta^*\delta(a)X_\delta\geq \frac{1}{2^{n_\delta}}X_\delta^*\delta(c_{n_\delta})X_\delta\geq 0
\] 
and  \[ \lVert X_\delta^*\delta(a)X_\delta\lVert \geq \frac{1}{2^{n_\delta}}\lVert X_\delta^*\delta(c_m)X_\delta\rVert \geq \frac{1}{2^{n_\delta}}(1-\varepsilon)>0.\] 
This establishes the claim that $\delta(a)\neq 0$. Consequently, $\Delta$ is not a minimizing boundary for $\fT$, since $\pi(a)=0$.
\end{proof}

\bibliography{minbdry}
\bibliographystyle{plain}

\end{document}